\documentclass{article}
\usepackage[english]{babel}

\usepackage[a4paper,top=2cm,bottom=2cm,left=3cm,right=3cm,marginparwidth=1.75cm]{geometry}

\usepackage{setspace}
\usepackage{amsmath}
\usepackage{amsthm}
\usepackage{amsfonts,amssymb}
\usepackage{mathtools}
\usepackage{lmodern}

\usepackage{extarrows}
\usepackage{tikz-cd}

\usepackage{booktabs}
\usepackage{tabularx}
\usepackage{multicol,multirow}
\usepackage{longtable}
\usepackage{makecell}
\usepackage{tabularray}

\usepackage{graphicx}
\usepackage[format=hang,font=small,textfont=it]{caption}
\usepackage{subcaption} 

\usepackage{enumerate}
\usepackage{indentfirst}

\usepackage{fancyhdr}
\usepackage{float}

\usepackage[nottoc]{tocbibind}
\usepackage{tocloft} 

\usepackage{xcolor}
\usepackage{listings}
\usepackage{comment}

\usepackage[colorlinks=true, allcolors=blue]{hyperref}

\usepackage{titlesec}
\titlelabel{\thetitle.~}

\makeatletter
\renewenvironment{proof}[1][\proofname]{\par
	\pushQED{\qed}%
	\normalfont \topsep6\p@\@plus6\p@\relax
	\trivlist
	\item[\hskip\labelsep
	\bfseries
	#1\@addpunct{.}]\ignorespaces
}{%
	\popQED\endtrivlist\@endpefalse
}
\makeatother

\theoremstyle{plain}
\newtheorem{thm}{Theorem}[section]
\newtheorem{prop}[thm]{Proposition}
\newtheorem{lemma}[thm]{Lemma}
\newtheorem{corollary}[thm]{Corollary}
\newtheorem{question}[thm]{Question}

\theoremstyle{definition}
\newtheorem{definition}[thm]{Definition}

\newtheorem{remark}[thm]{Remark}
\newtheorem{example}[thm]{Example}

\newtheorem{convention}[thm]{Convention}

\renewcommand\arraystretch{2}

\renewcommand\appendix{\par
	\setcounter{section}{0}
	\setcounter{subsection}{0}
	\gdef\thesection{\Alph{section}}}

\numberwithin{equation}{section}

\DeclareMathOperator{\id}{id}

\DeclareMathOperator{\im}{im}
\DeclareMathOperator{\gr}{gr}
\DeclareMathOperator{\Span}{Span}
\DeclareMathOperator{\lex}{lex}
\DeclareMathOperator{\supp}{supp}

\DeclareMathOperator{\GKdim}{GKdim}

\newcommand{\PP}{\mathcal{P}}
\newcommand{\GG}{\mathbb{G}}
\newcommand{\II}{\mathbb{I}}
\newcommand{\dd}{\mathfrak{d}}
\newcommand{\rr}{\mathfrak{r}}
\newcommand{\germ}{\mathfrak}
\makeatletter
\renewcommand*\env@matrix[1][\arraystretch]{%
	\edef\arraystretch{#1}%
	\hskip -\arraycolsep
	\let\@ifnextchar\new@ifnextchar
	\array{*\c@MaxMatrixCols c}}
\makeatother

\makeatletter
\newcommand{\subjclass}[2][1991]{%
	\let\@oldtitle\@title%
	\gdef\@title{\@oldtitle\footnotetext{#1 \emph{Mathematics Subject Classification.} #2}}%
}
\newcommand{\keywords}[1]{%
	\let\@@oldtitle\@title%
	\gdef\@title{\@@oldtitle\footnotetext{\emph{Key words and phrases.} #1.}}%
}
\makeatother

\title{The structure of iterated Hopf Ore extensions}
\author{B.-F. Yu, G.-S. Zhou}
\date{}

\newcommand{\Addresses}{{%
		\bigskip
		\footnotesize
		
		B.-F. Yu, \textsc{School of Mathematics and Statistics, Ningbo University, Ningbo 315211, China}\par\nopagebreak
		\textit{E-mail address}: \texttt{yubofan59@gmail.com}
		
		\medskip
		
		G.-S. Zhou, \textsc{School of Mathematics and Statistics, Ningbo University, Ningbo 315211, China}\par\nopagebreak
		\textit{E-mail address}: \texttt{zhouguisong@nbu.edu.cn}
}}

\subjclass[2020]{16T05, 16W70, 68R15, 16P90}
\keywords{Hopf Ore extension, PBW generator, thin replacement, connected Hopf algebra, coideal subalgebra, Gelfand-Kirillov dimension}

\begin{document}
	\maketitle

	\begin{abstract}
		This paper studies iterated Hopf Ore extensions (IHOEs) over a field $\Bbbk$ of characteristic zero, a significant class of connected Hopf algebras with finite Gelfand–Kirillov dimension. We  establish two fundamental structural properties for arbitrary IHOEs of $\Bbbk$. First, the class of IHOEs of $\Bbbk$ is closed under Hopf subalgebras and quotient Hopf algebras. Second, every one-sided coideal subalgebra of an IHOE of $\Bbbk$ is an iterated Ore extension of $\Bbbk$. These structural facts are built upon the thin replacement machinery for PBW generating systems originating from Kharchenko’s work. We further prove a general inheritance theorem for PBW generating systems, which serves as a systematic perturbation method to produce PBW generating systems for subalgebras from those of the ambient algebra under mild hypotheses. Based on these theoretical advances, we develop an explicit combinatorial algorithm for  classifying  all one-sided coideal subalgebras of arbitrary IHOEs of $\Bbbk$. As practical illustrations, we explicitly classify all right coideal subalgebras of noncocommutative connected Hopf algebras of GK-dimension three.
	\end{abstract}

	\section{Introduction}

	The classification of Hopf algebras of finite Gelfand–Kirillov (GK) dimension represents a central, long-standing problem in Hopf algebra theory. Two major research programs have been advanced to tackle this classification problem: the lifting theory for pointed Hopf algebras pioneered by Andruskiewitsch and Schneider \cite{AS02,AS10}, and the homological classification framework developed by Brown, Goodearl and Zhang \cite{BZ08,GZ10}. The majority of known classification theorems rely on restrictive hypotheses such as pointedness, smoothness or integrality, and exhaustive classifications remain essentially confined to Hopf algebras of GK-dimension at most two, whereas the higher-dimensional case remains widely uncharted. For further progress along these two directions, we refer the reader to the survey articles \cite{An25,AA17,BZ21} and the references cited therein.
	
	Considerable structural simplifications emerge when restricting attention to \emph{connected} Hopf algebras, a distinguished subclass within finite-GK-dimensional Hopf algebras. A Hopf algebra is called \emph{connected} if its coradical is one-dimensional. Standard examples include universal enveloping algebras of Lie algebras, coordinate rings of unipotent algebraic groups, and various combinatorial connected graded Hopf algebras, such as those of (quasi-)symmetric functions, permutations, and rooted trees. In particular, the latter family plays a foundational role in Connes and Kreimer’s rigorous formulation of quantum field renormalization \cite{CK98}. While commutative and cocommutative connected Hopf algebras were thoroughly characterized in the 1960s \cite{MM65,CP21}, systematic structural investigations of their noncommutative noncocommutative counterparts have only blossomed over the past decade; see, for instance, \cite{Zh13,WZZ15,Wa15,BOZZ15,BG16,BGZ19,ZSL20,BZ22,Zh24,HW26}.
	
	Hopf Ore extensions serve as fundamental building blocks for constructing and investigating noncommutative, noncocommutative Hopf algebras of large GK-dimension. Iterated Hopf Ore extensions, first introduced in \cite{BOZZ15}, constitute an iterative generalization of ordinary Hopf Ore extensions. By definition, an \emph{$n$-step Iterated Hopf Ore extension} (\emph{$n$-step IHOE}) of a Hopf algebra $R$ is a Hopf algebra $H$ admitting an ascending chain of Hopf subalgebras 
	\[
	R=H_{0} \subset H_{1} \subset \cdots \subset H_{n}=H,
	\]
	where each $H_i$ is an Ore extension of $H_{i-1}$ for all $1\le i\le n$. A fundamental result from \cite[Theorem 3.2]{BOZZ15} ensures that every $n$-step IHOE of $\Bbbk$ is a connected Hopf algebra with GK-dimension precisely $n$. This property enables the systematic construction of connected Hopf algebras of arbitrary finite GK-dimension. Low-dimensional classification results demonstrate the ubiquity of the IHOE structure: all connected Hopf algebras with GK-dimension $\le 4$ are IHOEs of $\Bbbk$ \cite{Zh13,WZZ15}. Hu extended this bound to GK-dimension $\le 6$ in her doctoral work \cite{Hu26},  without carrying out complete classification for such objects. Moreover, by \cite[Theorem B]{ZSL20}, every finite-GK-dimensional connected graded Hopf algebra is an IHOE of $\Bbbk$. Nonetheless, the IHOE structure does not capture all finite-GK-dimensional connected Hopf algebras. Classic counterexamples include universal enveloping algebras of finite-dimensional simple Lie algebras of dimension at least four, while more sophisticated noncocommutative counterexamples are constructed in \cite{HW26}.
	
	It is well known that finite-GK-dimensional connected Hopf algebras are closed under taking Hopf subalgebras and quotient Hopf algebras. In this work, we establish the analogous closure property for the class of IHOEs of $\Bbbk$, which forms our first main theorem.
	
	\begin{thm}[Theorem \ref{Hopf-subalgebra-structure}, Theorem \ref{Quotients-IHOE}]
		\label{main-Sub-Quotient}
		Every Hopf subalgebra and every quotient Hopf algebra of an IHOE of $\Bbbk$ is again an IHOE of $\Bbbk$.
	\end{thm}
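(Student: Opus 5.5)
The plan is to characterize IHOEs of $\Bbbk$ intrinsically by the existence of a suitable PBW generating system, and then show that this property passes to Hopf subalgebras and to quotients. The characterization I would use is the following. A connected Hopf algebra $H$ is an $n$-step IHOE of $\Bbbk$ if and only if it admits an ordered PBW generating system $z_1<\cdots<z_n$ with two properties. First, each subalgebra $H_i=\Bbbk\langle z_1,\dots,z_i\rangle$ is a Hopf subalgebra. Second, the coproduct takes the triangular form
\[
\Delta(z_i)=z_i\otimes 1+1\otimes z_i+\textstyle\sum a\otimes b, \qquad a,b\in H_{i-1}.
\]
The forward direction is essentially \cite[Theorem 3.2]{BOZZ15} together with the standard normalisation of the generators. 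For the converse, one checks that commutators $[z_i,h]$ for $h\in H_{i-1}$ land in $H_{i-1}$. This should follow by comparing coproducts and using that $H_{i-1}$ is a Hopf subalgebra, which gives the derivation data of the Ore extension. I expect the converse to reduce to known lemmas on Hopf Ore extensions of connected Hopf algebras.

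\textbf{Hopf subalgebras.} Let $K\subseteq H$ be a Hopf subalgebra. I would put the degree-lexicographic filtration coming from the PBW basis on $H$, and consider the set of leading PBW-words of elements of $K$. Next I apply the thin replacement machinery to the generating system of $H$, perturbing $z_1,\dots,z_n$ into a new PBW system $y_1,\dots,y_n$ of $H$. The goal is that a subset $y_{j_1},\dots,y_{j_m}$ lies in $K$ and is a PBW generating system of $K$; this is exactly the kind of statement the inheritance theorem for PBW generating systems announced in the abstract is designed to give. Triangularity of the coproducts restricted to $K$ then comes for free, because $K$ is a subcoalgebra. The Hopf-subalgebra property of each truncation $K_k=\Bbbk\langle y_{j_1},\dots,y_{j_k}\rangle$ is a different matter. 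I would deduce it by intersecting: $K_k=K\cap H_{j_k}'$, where $H'_\bullet$ is the filtration by the replaced generators. An intersection of Hopf subalgebras is a Hopf subalgebra. The characterization then shows that $K$ is an $m$-step IHOE. A GK-dimension count, $\GKdim K=m$, serves as a consistency check.

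\textbf{Quotients.} Let $I$ be a Hopf ideal and $\bar H=H/I$. I would push the chain $H_0\subset\cdots\subset H_n$ forward to the images $\bar H_i$. These are Hopf subalgebras of $\bar H$, and $\bar H_i$ is generated by $\bar H_{i-1}$ and $\bar z_i$. If $\bar z_i\in \bar H_{i-1}$, the step collapses. Otherwise I must show that $\bar H_i$ is an Ore extension of $\bar H_{i-1}$, i.e.\ that the powers of $\bar z_i$ are free over $\bar H_{i-1}$. Connectedness and characteristic zero should be what force this. If some nontrivial relation $\sum_k c_k\bar z_i^k=0$ held, I would apply $\Delta$ and look at the term of top degree in $z_i$. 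This should produce a relation of lower degree, and by induction one reaches $\bar z_i\in\bar H_{i-1}$. Dropping the collapsed steps then exhibits $\bar H$ as an IHOE.

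\textbf{Main obstacle.} The hardest step is the subalgebra case: producing generators of $K$ whose truncated subalgebras are genuinely Hopf subalgebras. Leading-term arguments give a PBW basis of $K$ fairly easily, but the Hopf compatibility of the intermediate steps needs the thin replacement to be chosen compatibly with the coproduct. That is where I would concentrate the work. In the quotient case, the delicate point is the coproduct degree argument in characteristic zero; there I would use the $(z_i\otimes 1)^a(1\otimes z_i)^b$ binomial terms, whose coefficients do not vanish in characteristic zero.
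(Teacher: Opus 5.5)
The characterization you rely on is false in the direction you need. Take $H=U(\mathfrak{sl}_3)$ and let $z_1<\cdots<z_8$ be any ordered basis of $\mathfrak{sl}_3$. By the PBW theorem this is a PBW generating system. Each $\Bbbk\langle z_1,\dots,z_i\rangle$ is generated by primitives, so it is a Hopf subalgebra. The coproducts $\Delta(z_i)=z_i\otimes1+1\otimes z_i$ trivially have your triangular form. Yet $U(\mathfrak{sl}_3)$ is not an IHOE of $\Bbbk$: its Hopf subalgebras are the $U(\mathfrak h)$, and $\mathfrak{sl}_3$ has no Lie subalgebra of codimension one. Concretely, for $z_1=e_1$, $z_2=f_1$ the algebra $\Bbbk\langle z_1,z_2\rangle\cong U(\mathfrak{sl}_2)$ has GK-dimension $3$, so it is not $\Bbbk[z_1][z_2;\sigma,\delta]$.

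The check you propose for the converse is also wrong. In an Ore extension one only has $[z_i,h]\in H_{i-1}z_i+H_{i-1}$; for instance, in $B(\lambda)$ one has $[Z,X]=-Z+\lambda Y$. The example above shows that the coproduct and the Hopf-subalgebra property cannot yield even this weaker inclusion.

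The missing ingredient is multiplicative triangularity, $z_jz_i\in\Bbbk^{\times}z_iz_j+\Span\{z_W\mid W<_{\rr,\lex}ij\}$. This is what makes the ordered monomials in $z_1,\dots,z_i$ span a subalgebra on which $z_i$ acts by skew-derivation data (Lemma~\ref{rearrangement}, Proposition~\ref{PBW-imply-IOE}). In the paper it is supplied for $K$ by the second half of Theorem~\ref{PBW-sub-hereditary}: $\hat{\GG}|_{\Xi'}$, with $\Xi'=\Xi(K,\GG,\dd)$, is $\dd|_{\Xi'}$-triangular, which uses the coideal property and an order-preserving $\dd$. Proposition~\ref{PBW-imply-IOE} then gives the Ore tower. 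Alternatively, you can restrict the ambient Ore extensions $H'_{j}=H'_{j-1}[y_j;\sigma_j,\delta_j]$ to $K_k$ and $K_{k-1}$, which are spanned by subsets of the same monomial basis.

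With that repair, your intersection trick $K_k=K\cap H'_{j_k}$ is a correct and tidy substitute for the paper's support computation. The $H'_j$ are Hopf subalgebras because the replaced system stays triangular and dominated (Proposition~\ref{proposition of replacement}). Note also that leading terms do not give a PBW basis of a subalgebra ``fairly easily''; think of $\Bbbk[x^2]\subset\Bbbk[x]$. It is the coideal property, through Corollary~\ref{core facts} and Lemma~\ref{index-set-induced}, that forces the leading words to be exactly the words in $\Xi'$.

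For quotients your route differs from the paper's, which does not push the chain forward. The paper writes $\mathfrak a=A^{+}H$ with $A=H^{\operatorname{co}\overline H}$ a left coideal subalgebra (Masuoka) and applies the inheritance theorem to $A$. After reordering (Proposition~\ref{change-of-order}) it identifies $\mathfrak a$ as the span of the $\hat z_V$ containing a letter of $\II_n'$. Hence the images of the remaining $\hat z_i$ form a triangular, dominated PBW system of $\overline H$, and Proposition~\ref{PBW-imply-IHOE} applies.

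Your approach can work, but all its content lies in the dichotomy ``either $\bar z_i\in\overline H_{i-1}$ or the powers of $\bar z_i$ are left $\overline H_{i-1}$-independent''. Your sketch does not prove it, for two reasons. You cannot isolate ``the term of top degree in $z_i$'' inside $\overline H_i\otimes\overline H_i$ without already knowing the freeness you want. And a relation $c_0+c_1\bar z_i=0$ gives $\bar z_i\in\overline H_{i-1}$ only when $c_1$ is invertible.

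One way to close the gap is to apply $\mathrm{id}\otimes\pi$ with $\pi\colon\overline H_i\to Q:=\overline H_i/\overline H_{i-1}^{+}\overline H_i$. Since $\overline H_{i-1}^{+}$ acts by zero on $Q$ and $\Delta(\bar z_i)-\bar z_i\otimes1-1\otimes\bar z_i\in\overline H_{i-1}^{+}\otimes\overline H_{i-1}^{+}$, one gets $(\mathrm{id}\otimes\pi)\Delta(\sum_k c_k\bar z_i^k)=\sum_{k,j}\binom{k}{j}c_k\bar z_i^{j}\otimes q_{k-j}$ with $q_m=\pi(\bar z_i^m)$.
\begin{itemize}
\item If $q_1\neq0$, the $q_m$ are linearly independent in characteristic zero. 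Taking the $q_1$-component turns a relation of minimal degree $d$ into $\sum_k kc_k\bar z_i^{k-1}=0$, of degree $d-1$ with leading coefficient $dc_d\neq0$, a contradiction.
\item If $q_1=0$, then $Q=\Bbbk$. Freeness of the connected Hopf algebra $\overline H_i$ over $\overline H_{i-1}$ then gives $\overline H_{i-1}=\overline H_i^{\operatorname{co}Q}=\overline H_i$.
\end{itemize}
You would still need to check that the induced $\bar\sigma$ is well defined and bijective; freeness and noetherianity of $\overline H_{i-1}$ handle this.
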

	
	Combining \cite[Theorem B]{ZSL20} and Theorem \ref{main-Sub-Quotient}, it follows that every Hopf subalgebra of a finite-GK-dimensional connected graded Hopf algebra is an IHOE of $\Bbbk$ (Corollary \ref{coideal-structure-homo}). For homogeneous substructures, a stronger relative result was established by Li and the second-named author in \cite[Theorem B]{LZ23}: every finite-GK-dimensional connected graded Hopf algebra is an IHOE of each of its homogeneous Hopf subalgebras. These results raise a natural problem: if $H$ is an IHOE of $\Bbbk$, is $H$ necessarily an IHOE of each of its Hopf subalgebras? A closely related question in \cite[Question C]{LZ23} asks whether every finite-GK-dimensional connected Hopf algebra is an IHOE of its primitive part, the universal enveloping algebra of its primitive space. The umbrella Hopf algebra $\mathrm{UM}(2,2)$, constructed in \cite{HW26} and revisited in Remark \ref{example-order-preserving}, serves as a counterexample to both questions: it is an IHOE of $\Bbbk$ but not of primitive part \cite[Corollary 4.5]{HW26}.
	
	In addition, Hu proved that finite-GK-dimensional connected Hopf algebras are closed under Hopf algebra extensions \cite{Hu26}. Together with the subquotient closure of IHOEs established above, this motivates the following  question concerning extension stability of the IHOE framework.
	
	\begin{question}
		For any exact sequence of Hopf algebras $\Bbbk \rightarrow K \rightarrow H \rightarrow J \rightarrow \Bbbk,$
		if $K$ and $J$ are IHOEs of $\Bbbk$, is $H$ also an IHOE of $\Bbbk$?
	\end{question}

	Beyond Hopf subalgebras and quotient Hopf algebras, one-sided coideal subalgebras represent another fundamental class of intermediate substructures that merit systematic study. A \emph{left} (resp.\ \emph{right}) coideal subalgebra of a Hopf algebra $H$ is a subalgebra $A\subseteq H$ satisfying $\Delta_H(A)\subseteq H\otimes A$ (resp.\ $\Delta_H(A)\subseteq A\otimes H$). Geometrically, such subalgebras correspond to quantum homogeneous spaces, and existing literature has mainly focused on their freeness, faithful flatness, and homological properties; see \cite{Ma91,Sk07,LW12,Kr12}. From a Lie-theoretic perspective, they serve as the core constituents of quantum symmetric pairs, a notion originally introduced by Letzter for quantized enveloping algebras \cite{Le02} and further advanced by Wang and his collaborators \cite{Wa23}. In contrast to Hopf subalgebras, which are generally scarce for quantum groups, coideal subalgebras provide a rich collection of intermediate substructures \cite{Kh08,Kh11,KS08,Po09,HS13}. For finite-GK-dimensional connected Hopf algebras, additional ring-theoretic and homological properties of coideal subalgebras have been obtained in \cite{BG16,Zh24}. In the present work, we establish a structure characterization of coideal subalgebras within arbitrary IHOEs of $\Bbbk$ from the perspective of Ore extension theory, which forms our second main theorem.

	\begin{thm}[Theorem \ref{Hopf-subalgebra-structure}]
		\label{main-theorem-2}
		Let $H$ be an IHOE of $\Bbbk$. Then every left (resp.\ right) coideal subalgebra $A$ of $H$ is an iterated Ore extension of $\Bbbk$. More precisely, there exists a chain
		\[
		\Bbbk =A_{0} \subset A_{1} \subset \cdots \subset A_{m}=A
		\]
		of left (resp.\ right) coideal subalgebras of $H$ with $m=\operatorname{GKdim} A$, such that each $A_i$ is an Ore extension of $A_{i-1}$ for $1\le i\le m$.
	\end{thm}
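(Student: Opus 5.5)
The plan is to make everything run through PBW generating systems, following the thin replacement machinery announced in the abstract. First I record the standard structural fact for an $n$-step IHOE $H$ of $\Bbbk$: by \cite[Theorem 3.2]{BOZZ15} and its proof, one can choose the Ore variables $x_1,\dots,x_n$ so that the ordered monomials $x_1^{a_1}\cdots x_n^{a_n}$ form a basis of $H$, the coproduct has the triangular form
\[
\Delta(x_i)=x_i\otimes 1+1\otimes x_i+\sum w'\otimes w'' ,
\]
with $w',w''$ in $H_{i-1}$, and the commutation relations are $x_jx_i=x_ix_j+(\text{terms in }H_{j-1})$ for $i<j$ after a suitable normalization. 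I equip $H$ with the filtration (or ordering of PBW words) in which $x_i$ has degree larger than anything in $H_{i-1}$, so that $\gr H$ is a commutative, or at least skew-polynomial, algebra.

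Second, given a right coideal subalgebra $A$, I run a leading-term argument. For each $a\in A$ take its leading PBW word. The set of leading words is closed under multiplication, so by finite GK-dimension it is determined by finitely many ``leading generators.'' The key claim is that the leading words of $A$ form the monoid generated by powers $x_{i}^{d}$ for a subset $i_1<\dots<i_m$ of indices; for these one can pick elements $y_k\in A$ with leading term a power of $x_{i_k}$. The coideal condition is what forces this shape. If $a\in A$ has leading word $u$, then applying $(\mathrm{id}\otimes\pi)\Delta$ for suitable projections, and using the triangular coproduct, extracts from $a$ elements of $A$ whose leading words are the ``right factors'' of $u$. In characteristic zero, binomial coefficients are invertible, and this lets us peel off individual letters. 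This is the thin replacement step: we replace a candidate PBW generator by a ``thin'' one, whose leading term is a single power of one $x_{i}$. I expect this step to be the main obstacle. It needs a careful induction on $n$: restrict to $A\cap H_{n-1}$, a right coideal subalgebra of $H_{n-1}$, and handle the top variable $x_n$ separately, using the projection $H\to H\otimes \Bbbk[x_n]$-type comodule maps.

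Third, with $y_1,\dots,y_m$ in hand, I verify the following: ordered monomials in the $y_k$ form a basis of $A$, by leading-term comparison. I also verify that $A_k:=\Bbbk\langle y_1,\dots,y_k\rangle = A\cap H_{i_k}$ is a right coideal subalgebra, since $H_{i_k}$ is a Hopf subalgebra and the intersection of a right coideal subalgebra with a Hopf subalgebra is again one. Then I show that $y_kA_{k-1}\subseteq A_{k-1}y_k+A_{k-1}$, which follows from the commutation relations together with the fact that $A_{k-1}$ is determined by its leading words. These facts give $A_k=A_{k-1}[y_k;\sigma_k,\delta_k]$. Here $\sigma_k$ is in fact the identity, or an automorphism coming from a scalar, in the connected setting. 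Since the PBW basis has $m$ generators, $\operatorname{GKdim}A=m$.

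Finally, I handle left coideal subalgebras by applying the antipode. $S$ is bijective since $H$ is connected, and it maps left coideal subalgebras to right coideal subalgebras of $H^{\mathrm{op}}$ or $H^{\mathrm{cop}}$, which are again IHOEs of $\Bbbk$. An iterated Ore extension structure also transports through anti-isomorphisms, because opposite Ore extensions are Ore extensions when $\sigma$ is an automorphism.
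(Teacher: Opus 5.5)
Your overall architecture is the paper's. Your order, in which $x_i$ dominates $H_{i-1}$, is exactly the right-lexicographic weight $\dd_{\Xi}\colon i\mapsto e_i\in\mathbb{N}^n$ of Remark~\ref{right-lex-monoid}. That weight is order-preserving and makes $\GG$ both $\dd_\Xi$-triangular and $(\Delta_H,\dd_\Xi)$-dominated. Extracting elements of $A$ by $(\mathrm{id}\otimes\phi_U)\Delta$ and dividing by binomial coefficients is Lemma~\ref{index-set-induced}, and your leading-word spanning argument is the proof of Theorem~\ref{PBW-sub-hereditary}.

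Two of your deviations are nicer than the paper's versions. First, $A_k:=A\cap H_{i_k}$ is a right coideal subalgebra at once, because $(A\otimes H)\cap(H_{i_k}\otimes H_{i_k})=(A\cap H_{i_k})\otimes H_{i_k}$. Second, the antipode reduction works; in fact $S(A)$ is already a right coideal subalgebra of $H$ itself, with $S(A)\cong A^{\mathrm{op}}$.

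\textbf{Gap 1: the key step is asserted, not proved.} The step you call the main obstacle is where the content lies. You need two facts. If $a\in A$ has leading word $V$ and $U\mid V$, then $(\mathrm{id}\otimes\phi_U)\Delta(a)$ has leading word exactly $V/U$ with coefficient $\binom{V}{U}$. And an element whose leading word is a single letter $i$ lies in $z_i+H_{i-1}$. These are the paper's Proposition~\ref{comultiplication} and Corollary~\ref{core facts}, together with order-preservation of $\dd$. With your order the argument is short. Each $\Delta(z_i)-z_i\otimes 1-1\otimes z_i$ has total weight $<e_i$, so
$\Delta(z_V)\in\sum_{W\mid V}\binom{V}{W}z_W\otimes z_{V/W}+\Span\{z_P\otimes z_Q\mid \dd_\Xi(P)+\dd_\Xi(Q)<\dd_\Xi(V)\}$.
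Cancellation in $\mathbb{N}^n$, where $\dd_\Xi$ is injective on ordered words, then finishes. But you must write this down, since it is exactly where the triangular coproduct and characteristic zero are used. It shows that the leading words of $A$ are precisely the monomials in a subset of letters, each with exponent $1$, not powers $x_i^d$. Your finiteness remark is also wrong as stated: finite GK-dimension does not make a submonoid of $\mathbb{N}^n$ finitely generated. What does the work is divisor-closedness, which is this same estimate.

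\textbf{Gap 2: Step 3 rests on a false normal form.} The relations cannot be normalized to $x_jx_i\in x_ix_j+H_{j-1}$; the correct form is $x_jx_i\in x_ix_j+H_{i-1}x_j+H_{j-1}$ (Proposition~\ref{IHOE properties}(4)). In $B(\lambda)$ one has $ZX=XZ-Z+\lambda Y$. Replacing $Z$ by $Z+g$ with $g\in\Bbbk[Y][X]$ never removes the $-Z$, so $\sigma_3(X)=X-1$. The same happens inside coideal subalgebras: for the family with $\II_3'=\{2,3\}$ in $B(0)$, $\sigma(z_2+az_1)=z_2+az_1-1$. So $\sigma_k$ is in general a nontrivial winding automorphism $f\mapsto\sum f_{(1)}\chi(f_{(2)})$. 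Your sketch gives no reason why it maps $A_{k-1}$ onto itself. The definition of Ore extension requires this, and it is not automatic for an infinite-dimensional algebra.

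The inclusion $y_kA_{k-1}\subseteq A_{k-1}y_k+A_{k-1}$ is fine. Compare the expansion of $y_ka$ in $A_k=\bigoplus_e A_{k-1}y_k^e$ with $y_ka=\sigma_k(a)y_k+\delta_k(a)$ in the free left $H_{i_k-1}$-module $H_{i_k}$. For bijectivity you must add one of two arguments. Either use the unitriangularity argument of Proposition~\ref{PBW-imply-IOE}, or observe that $\sigma_k$ and its inverse $f\mapsto\sum f_{(1)}\chi(S(f_{(2)}))$ both preserve the right coideal subalgebra $A_{k-1}$. Finally, $\operatorname{GKdim}A=m$ needs the triangular structure (Lemma~\ref{filtration-PBW} and Proposition~\ref{ring-homo-properties}), not merely the count of PBW generators.
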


	The proofs of the above structural theorems rely fundamentally on Kharchenko’s \emph{thin replacement machinery} for PBW generating systems, a perturbation technique originally developed for coideal subalgebras of quantum Borel algebras \cite{Kh08,Kh11,KS08,Po09}. To set the stage, we fix necessary notation and terminology. Let $H$ be an algebra equipped with an algebra comultiplication $\Delta:H\to H\otimes H$. Let $\GG = (\{z_{\xi}\}_{\xi\in \Xi}, <)$ be an ordered family of elements in $H$, i.e., $\{z_{\xi}\}_{\xi\in \Xi}$ is a family of elements of $H$ and $<$ is a total order on the index set $\Xi$. The family $\GG$ forms a \emph{PBW generating system} if all ordered monomials in $\{z_\xi\}$ constitute a linear basis of $H$ (Definition \ref{definition-PBW-generating-system}). Let $\Gamma$ be a nontrivial well-ordered free abelian monoid and $\dd:\Xi\to\Gamma$ a positive weight function. The family $\GG$ is \emph{$\dd$-triangular} and \emph{$(\Delta,\dd)$-dominated} whenever the multiplication and comultiplication of generators admit specific constrained expansions in ordered monomials, respectively (Definition \ref{definition-triangular}). A \emph{$\dd$-thin replacement} of $\GG$ is a perturbed ordered family $\hat{\GG}=(\{\hat{z}_\xi\}_{\xi\in\Xi},<)$, where each generator $\hat{z}_\xi$ differs from $z_\xi$ by a linear combination of  constrained ordered monomials (Definition \ref{thin replacement}). For any subalgebra $A\subseteq H$, we define $\Xi(A,\GG,\dd)$  as the set of indices $\xi\in \Xi$ such that $z_{\xi}$ admits a thin perturbation contained in $A$ (Formula \eqref{sub-index-set}). Based on these notions, we establish our third core theorem, which governs the inheritance of PBW structures under coideal subalgebra restriction.

	\begin{thm}[PBW Inheritance Theorem, Theorem \ref{PBW-sub-hereditary}]\label{main-theorem-PBW-inheritance}
		Let $H$ be an algebra equipped with a PBW generating system $\mathbb{G}=(\{z_{\xi}\}_{\xi\in \Xi},<)$.
		Let $A\subseteq H$ be a subalgebra such that either $\Delta(A)\subseteq A\otimes H$ or $\Delta(A)\subseteq H\otimes A$ with respect to some algebra homomorphism $\Delta:H\to H\otimes H$.
		Assume $\mathbb{G}$ is $\dd$-triangular and $(\Delta,\dd)$-dominated with respect to some order-preserving positive map $\dd\colon \Xi\to \Gamma$.
		If $\hat{\mathbb{G}}=(\{\hat{z}_{\xi}\}_{\xi\in \Xi},<)$ is any $\dd$-thin replacement of $\mathbb{G}$ satisfying $\hat{z}_{\xi}\in A$ for all $\xi\in \Xi(A,\mathbb{G},\dd)$,
		then the restricted ordered family
		\[
		\hat{\mathbb{G}}\big|_{\Xi(A,\mathbb{G},\dd)}
		=\bigl(\{\hat{z}_{\xi}\}_{\xi\in \Xi(A,\mathbb{G},\dd)},\,{<}\big|_{\Xi(A,\mathbb{G},\dd)}\bigr)
		\]
		constitutes a PBW generating system for $A$ and is $\dd\big|_{\Xi(A,\mathbb{G},\dd)}$-triangular.
	\end{thm}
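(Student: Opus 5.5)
We prove the PBW Inheritance Theorem in three stages: first a leading-term machinery coming from $\dd$-triangularity, then linear independence and spanning of the restricted monomials in $A$, and finally triangularity of the restricted family. Throughout, $\Xi_A:=\Xi(A,\GG,\dd)$.

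\textbf{Leading terms.} Since $\dd$ is order preserving and positive and $\Gamma$ is a well-ordered free abelian monoid, assign to each ordered monomial $z_{\xi_1}^{k_1}\cdots z_{\xi_r}^{k_r}$ the degree $\sum k_i\dd(\xi_i)\in\Gamma$. Compare monomials first by degree and then by a lexicographic refinement, chosen compatibly with the constrained expansions in Definition \ref{definition-triangular}. We first record a standard lemma: $\dd$-triangularity of $\GG$ implies that any product of ordered monomials rewrites as its straightened ordered monomial plus strictly smaller terms. A $\dd$-thin replacement changes each generator only by such smaller terms. Hence the ordered monomials in $\hat{\GG}$ are unitriangular with respect to those in $\GG$, so $\hat{\GG}$ is again a PBW generating system and is $\dd$-triangular. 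This also gives a well-defined leading monomial $\mathrm{lm}(a)$ for every $0\neq a\in H$.

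\textbf{Independence and spanning.} Linear independence of the ordered monomials in $\{\hat z_\xi\}_{\xi\in\Xi_A}$ is immediate, since they form a subfamily of a basis of $H$, and by hypothesis they lie in $A$. The substance is spanning. We argue by well-founded induction on leading terms: take $0\neq a\in A$ whose leading monomial is minimal among elements of $A$ not in the span $B$ of these monomials. The key claim is
\[
\mathrm{lm}(a)=z_{\xi_1}^{k_1}\cdots z_{\xi_r}^{k_r}\quad\Longrightarrow\quad \xi_1,\dots,\xi_r\in\Xi_A .
\]
Granting the claim, subtract the appropriate multiple of $\hat z_{\xi_1}^{k_1}\cdots\hat z_{\xi_r}^{k_r}\in B$ from $a$. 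This strictly lowers the leading term, contradicting minimality. Well-orderedness of $\Gamma$ makes the induction terminate, at least degree-wise; within a fixed degree there are only finitely many monomials, assuming finiteness of the fibres of $\dd$, or else we use the lexicographic well-order.

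\textbf{The claim (main obstacle).} This is where the coideal condition and $(\Delta,\dd)$-domination enter. Assume $\Delta(A)\subseteq A\otimes H$; the other case is symmetric. Expand $\Delta(a)$ using domination, so that $\Delta(z_\xi)=z_\xi\otimes1+1\otimes z_\xi+{}$(terms whose tensor factors have smaller $\dd$-weights, in the constrained shape). Expanding the leading monomial then produces, in $\Delta(a)$, terms of the form
\[
z_{\xi_1}^{k_1}\cdots z_{\xi_{j-1}}^{k_{j-1}}\cdots\otimes z_{\xi_j}\cdots ,
\]
that is, terms in which a single chosen factor $z_{\xi_j}$ is peeled off into one side. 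Apply to the $H$-side a linear functional $f$ picking out the coefficient of a suitable monomial in the $\GG$-basis. We then obtain $(\id\otimes f)\Delta(a)\in A$, whose expansion is $z_{\xi_j}$ (up to a nonzero scalar, using characteristic zero to divide by multinomial coefficients $k_j$) plus terms smaller in the $\dd$-thin sense. Such an element is exactly a thin perturbation of $z_{\xi_j}$ lying in $A$, so $\xi_j\in\Xi_A$. The delicate points are the following:
\begin{enumerate}
\item choosing $f$, or a sequence of functionals, so that the contributions from lower terms of $a$ and from the tail terms of $\Delta$ do not contaminate the leading coefficient, which is controlled by the constrained shapes in domination together with the total-degree ordering;
\item verifying that the resulting element genuinely satisfies the constraints in Definition \ref{thin replacement}, rather than being merely lower order.
\end{enumerate}
To handle the first point, we would try peeling off the largest index $\xi_r$ and then inducting on the number of factors. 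Applying this to the remaining factors via the left coideal side, iterated through the algebra-map property of $\Delta$, handles all indices.

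\textbf{Triangularity.} Finally, the restricted family is $\dd|_{\Xi_A}$-triangular. Indeed, the commutator expansion of $\hat z_\xi\hat z_\eta$ with $\xi,\eta\in\Xi_A$ lies in $A$. By the spanning argument it is a combination of ordered $\hat z$-monomials on $\Xi_A$. Uniqueness of the expansion in the full basis $\hat{\GG}$, which is $\dd$-triangular by the first stage, shows that this combination is the constrained one.
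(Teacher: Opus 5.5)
Your outline has the same architecture as the paper's proof. $\hat{\GG}$ is again a $\dd$-triangular PBW system, independence is inherited, and spanning goes by induction on the $<_{\rr,\dd}$-leading word. The key claim is the paper's Lemma~\ref{index-set-induced}, proved by applying coordinate functionals $\phi_U$ to the non-$A$ tensor factor of $\Delta(a)$ and inducting on length. Triangularity of the restriction then follows from supports.

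The gap is that the key claim is never actually proved: the two ``delicate points'' you leave open are its entire content.

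Point~1 needs an exact description of $\Delta(z_V)$ (Proposition~\ref{comultiplication}). Namely, $\Delta(z_V)-\sum_{W\mid V}\binom{V}{W}z_W\otimes z_{V/W}$ is supported on pairs $(P,Q)$ with $\dd(PQ)\le\dd(V)$ and $\Pi(PQ)<_{\rr,\lex}V$. This is proved by induction on the length of $V$, straightening products via Lemma~\ref{rearrangement} and comparing via Lemma~\ref{order-compare-rearrangement}. From it one gets (Corollary~\ref{core facts}): for $a\in z_V+\Span\{z_W\mid W<_{\rr,\dd}V\}$ and $U\mid V$, $(\id\otimes\phi_U)\Delta(a)\in\binom{V}{U}z_{V/U}+\Span\{z_N\mid N<_{\rr,\dd}V/U\}$. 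Taking $U=V/\xi$ and then $U=\xi$, both applied to the same tensor factor, produces elements of $A$ with leading words $\xi$ and $V/\xi$. The induction on length then uses only the one coideal condition you are given; no ``left coideal side'' is needed.

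Point~2 is a genuinely missing idea, and it is exactly where order-preservation of $\dd$ is used. You invoke that hypothesis only to define degrees, where it plays no role. The extracted element with leading word $\xi$ is merely $<_{\rr,\dd}$-lower than $z_\xi$. A priori that allows terms $z_W$ with $\dd(W)<\dd(\xi)$ but $W$ right-lexicographically above $\xi$, so the element need not lie in $z_\xi+H[\GG,\xi,\dd]$.

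Order-preservation closes this. Suppose $\emptyset\neq W\in\PP(\Xi,<)$ satisfies $W<_{\rr,\dd}\xi$ and its last letter $\eta$ satisfies $\eta\ge\xi$. Then $\dd(\xi)\le\dd(\eta)\le\dd(W)\le\dd(\xi)$, so $\dd(W)=\dd(\xi)$. Hence $W<_{\rr}\xi$, i.e.\ $\eta<\xi$, a contradiction. So every such $W$ satisfies $W<_{\rr,\lex}\xi$ and $\dd(W)\le\dd(\xi)$.

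The hypothesis cannot be dropped. Take $\Bbbk[x,y]$ with $x,y$ primitive, $z_1=x$, $z_2=y$, $1<2$, $\dd(1)=2$, $\dd(2)=1$. This family is $\dd$-triangular and $(\Delta,\dd)$-dominated. Yet the Hopf subalgebra $A=\Bbbk[x+y]$ has $\Xi(A,\GG,\dd)=\emptyset$, so the conclusion would force $A=\Bbbk$.

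Finally, termination of your induction should come from the well-foundedness of $<$ on each fibre $\dd^{-1}(\gamma)$. This is part of $\dd$-triangularity and makes $<_{\rr,\dd}$ a well-order (Lemma~\ref{wellorder}). Finiteness of the fibres is not available.
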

	
	With this PBW inheritance theorem in hand, we now outline the proof strategy unifying the Hopf subalgebra case of Theorem \ref{main-Sub-Quotient} and the coideal subalgebra characterization from Theorem \ref{main-theorem-2}. Separately, we address the quotient Hopf algebra case of Theorem \ref{main-Sub-Quotient}. Let $H$ be an IHOE of $\Bbbk$. By construction, $H$ possesses a canonical finite PBW generating system that is $\dd$-triangular and $(\Delta,\dd)$-dominated for some order-preserving positive map $\dd\colon \Xi\to \mathbb{N}$. For any Hopf subalgebra or one-sided coideal subalgebra $A\subseteq H$, Theorem \ref{main-theorem-PBW-inheritance} endows $A$ with a $\dd|_{\Xi'}$-triangular PBW generating system $\hat{\GG}|_{\Xi'}$, where $\Xi'=\Xi(A,\GG,\dd)$. Applying Proposition \ref{PBW-imply-IOE} to this restricted PBW generating system produces an Ore extension tower for $A$ of length precisely $\operatorname{GKdim} A$. Furthermore, by Proposition \ref{proposition of replacement}, the $\dd$-thin replacement $\hat{\GG}$ inherits both $\dd$-triangular and $(\Delta,\dd)$-dominated properties, ensuring that every layer of the resulting Ore extension tower preserves the structural type of $A$. 
	
	We next resolve the quotient case of Theorem \ref{main-Sub-Quotient}. By Masuoka’s correspondence theorem \cite{Ma91}, any Hopf ideal $\mathfrak{a} \subset H$ corresponds to a left coideal subalgebra $A:=H^{\operatorname{co} H/\mathfrak{a}}$ of $H$ such that $\mathfrak{a}=A^{+}H$. Applying Theorem \ref{main-theorem-PBW-inheritance}, we obtain an ordered family $\overline{\GG}=(\{\overline{z}_{\xi}\}_{\xi\in \Xi''},\,{<}|_{\Xi''})$ in $H/\mathfrak{a}$, where $\Xi''=\Xi\setminus\Xi'$ and $\overline{z}_{\xi}=\hat z_{\xi}+\mathfrak{a}$. This family forms a $\dd|_{\Xi''}$-triangular and $(\Delta_{H/\mathfrak{a}},\dd|_{\Xi''})$-dominated PBW generating system for the quotient Hopf algebra $H/\mathfrak{a}$. Consequently, Proposition \ref{PBW-imply-IHOE} implies that $H/\mathfrak{a}$ is an IHOE of $\Bbbk$, which completes the proof of the quotient closure property.

	Building on the preceding structural and technical theorems, we develop a systematic combinatorial procedure to classify coideal subalgebras in arbitrary IHOEs, which we formulate as the algorithmic theorem below.
	As concrete applications of this algorithm, we completely classify all right coideal subalgebras of the connected Hopf algebras $A(\lambda_{1},\lambda_{2},\alpha)$ and $B(\lambda)$, representative families of noncocommutative connected Hopf algebras of GK-dimension three originally classified by Zhuang~\cite{Zh13}; see Subsections \ref{subsec:A-type} and \ref{subsec:B-type}.
	
	\begin{thm}[Classification Algorithm for Coideal Subalgebras, Subsection \ref{subsec:algorithm}]
		\label{thm-algorithm}
		There exists an explicit finite combinatorial algorithm that takes as input any IHOE of $\Bbbk$ and outputs a complete list of all left and right coideal subalgebras of the given IHOE.
	\end{thm}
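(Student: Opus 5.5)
We prove Theorem \ref{thm-algorithm} by showing that every one-sided coideal subalgebra of $H$ is pinned down by finitely many discrete data and finitely many scalars, and that for each choice of discrete data the admissible scalars form an explicitly computable set. Start with an $n$-step IHOE $H$ of $\Bbbk$, given concretely by its Ore data. First write down its canonical finite PBW generating system $\GG=(\{z_1,\dots,z_n\},<)$. Fix the order-preserving positive map $\dd\colon\{1,\dots,n\}\to\mathbb{N}$ for which $\GG$ is $\dd$-triangular and $(\Delta,\dd)$-dominated. Both can be read off the Ore data by computing the structure constants of the products $z_jz_i$ and the coproducts $\Delta(z_i)$ in ordered monomials.

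Let $A$ be a right coideal subalgebra; the left case is symmetric. By Theorem \ref{main-theorem-PBW-inheritance}, $A$ has a PBW generating system $\{\hat z_\xi\}_{\xi\in\Xi'}$ with $\Xi'=\Xi(A,\GG,\dd)\subseteq\{1,\dots,n\}$. Each $\hat z_\xi=z_\xi+\sum_{w}c_{\xi,w}w$ is a $\dd$-thin replacement. Here $w$ runs over the ordered monomials in the $z_\eta$ that are allowed by Definition \ref{thin replacement}, namely those of $\dd$-weight equal to $\dd(\xi)$ built from generators $\eta<\xi$, or the analogous constraint. Since $\dd$ is $\mathbb{N}$-valued and positive, only finitely many such $w$ exist for each $\xi$. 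So $A$ is the subalgebra generated by finitely many elements, each determined by a finite vector of coefficients $c_{\xi,w}$.

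The discrete datum is the subset $\Xi'$; there are $2^n$ choices. The continuous data are the coefficients, which can be normalized: modulo $A$ itself, reduce $\hat z_\xi$ so that no monomial $w$ lying in the span of ordered monomials in $\{\hat z_\eta\}_{\eta\in\Xi',\,\eta<\xi}$ appears. This makes the representation unique, and it matches $A$ bijectively with pairs consisting of $\Xi'$ and normalized coefficients. The justification is that $A$ is recovered as the span of ordered monomials in the $\hat z_\xi$, and that $\Xi'$ is intrinsic to $A$.

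For each $\Xi'$, the conditions on the normalized coefficients are as follows.
\begin{enumerate}
\item The span $B$ of ordered monomials in the $\hat z_\xi$ must be closed under multiplication. By $\dd$-triangularity it suffices that each product $\hat z_\eta\hat z_\xi$ with $\eta>\xi$ re-expands inside $B$; this is checked weight by weight.
\item $B$ must be a right coideal, which means $\Delta(\hat z_\xi)\in B\otimes H$ for each $\xi\in\Xi'$.
\item Every $\xi$ that is achievable in $B$ must already lie in $\Xi'$, so that $\Xi(B,\GG,\dd)=\Xi'$.
\end{enumerate}
Conditions 1 and 2 are polynomial equations in the $c_{\xi,w}$, obtained by expanding in the PBW basis of $H$ and $H\otimes H$. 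Condition 3 is a finite non-membership condition. Thus each stratum is a constructible subset of affine space, and the output is the finite list of these strata with their defining equations. This is an explicit finite algorithm of the kind Theorem \ref{thm-algorithm} asserts. Completeness follows from Theorem \ref{main-theorem-PBW-inheritance}. Soundness follows because conditions 1 and 2 make $B$ a right coideal subalgebra whose PBW system is exactly the given one.

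The main obstacle is the normalization step and condition 3. We must make sure that two coefficient vectors give the same $A$ only if they coincide after normalization, and that the achievable-index condition is decidable in finitely many steps. Our approach to both is an induction on $\dd$-weight: at each weight, the finite-dimensional space $A_{\le d}$ is determined by the data of lower weight, and linear algebra inside the finite-dimensional filtered piece $H_{\le d}$ then settles membership questions. Finiteness of each $H_{\le d}$ comes from positivity of $\dd$ and from $\Xi$ being finite.
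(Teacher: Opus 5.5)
Your proposal is correct and takes essentially the same route as the paper's algorithm in Subsection \ref{subsec:algorithm}: enumerate the subsets $\II_n'$, parametrize the $\hat z_i$ by finitely many $\dd$-thin-replacement coefficients, and impose the coideal and closure (commutator) conditions as polynomial equations, with completeness coming from Theorem \ref{main-theorem-PBW-inheritance}. Your normalization is a refinement the paper carries out only in its examples (e.g.\ ``we may set $a=0$''), and your condition 3, which you flag as the main obstacle, is automatic: by Lemma \ref{replacement-control} every nonzero element of $B$ has its $<_{\rr,\dd}$-leading $\GG$-word in $\PP(\Xi')$, so $\Xi(B,\GG,\dd)=\Xi'$ holds without any further check.
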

	
	The remainder of this paper is organized as follows. Section \ref{sec:prelim} reviews preliminary notation, PBW generating systems, and Ore extension theory. Section \ref{sec:triangular} develops the triangular and dominated axioms for PBW generating systems. Section \ref{sec:thin-replacement} establishes the thin replacement machinery and proves the PBW inheritance theorem (Theorem \ref{main-theorem-PBW-inheritance}). Section \ref{sec:IHOE-structure} investigates iterated Hopf Ore extensions, establishes subquotient closure (Theorem \ref{main-Sub-Quotient}) and the coideal subalgebra structure theorem (Theorem \ref{main-theorem-2}). Complementing these structural results, we introduce minimal PBW generating systems to yield an explicit description of coradical filtrations, together with a sharp criterion determining when an IHOE of $\Bbbk$ is an IHOE  of  its primitive part. Section \ref{sec:classification} presents our  classification algorithm and furnishes a complete classification of  right coideal subalgebras for noncocommutative  connected Hopf algebras of GK-dimension three.

	\paragraph{Notation and conventions.}
	Throughout this paper, we work over a fixed base field $\Bbbk$, which is tacitly assumed to be of characteristic zero. All vector spaces, algebras, Hopf algebras, and unadorned tensor products are defined over $\Bbbk$. We let $\mathbb{N}$ denote the set of natural numbers. For each integer $n\ge 0$, write $\II_{n} = \{1,\dots, n\}$. The cardinality of a set $X$ is denoted by $\#(X)$.

	\section{Preliminaries}
	\label{sec:prelim}
	
	In this section, we fix basic definitions, conventions and notation employed throughout the paper. In particular, we recall the notions of PBW generating systems and Ore extensions for algebras. Furthermore, since words over totally ordered sets are used to index algebraic elements, we introduce several order relations on words that are frequently employed in the sequel.

	\subsection{PBW generators and Ore extensions}
	Let $H$ be an algebra. Let $\GG = (\{z_{\xi}\}_{\xi\in \Xi}, <)$ be an \emph{ordered family} of elements in $H$. This means that $\{z_{\xi}\}_{\xi\in \Xi}$ is a family of elements of $H$ and $<$ is a total order on the index set $\Xi$.
	
	We write $\PP(\Xi)$ for the set of words on $\Xi$. A \emph{word} on $\Xi$ is a finite sequence of elements of $\Xi$. The empty word is denoted by $\emptyset$. Set $\PP_{+}(\Xi) = \PP(\Xi)\setminus\{\emptyset\}$. Define 
	\[
	z_{V} := z_{\xi_{1}}\cdots z_{\xi_{n}}
	\]
	for any nonempty word $V=\xi_{1}\cdots \xi_{n}\in \PP_{+}(\Xi)$.
	By convention, $z_{\emptyset} = 1$.
	
	Let $\PP(\Xi, <)$ denote the set of non-decreasing words on $\Xi$ with respect to $<$. A word $\xi_{1}\cdots\xi_{n}$ lies in $\PP(\Xi,<)$ if and only if $\xi_{1}\leq \cdots\leq \xi_{n}$. Write $\PP_{+}(\Xi,<) = \PP(\Xi,<)\setminus\{\emptyset\}$. Let
	\begin{eqnarray}\label{rearrangement-map}
		\Pi=\Pi_{<}\colon \PP(\Xi)\to \PP(\Xi,<)
	\end{eqnarray}
	be the rearrangement map which sends each word $V=\xi_{1}\cdots\xi_{n}$ to the unique permutation $\Pi(V)=\xi_{i_{1}}\cdots\xi_{i_{n}}$ satisfying $\xi_{i_{1}}\leq \cdots\leq \xi_{i_{n}}$. We set $\Pi(\emptyset)=\emptyset$.
	
	\begin{definition}\label{definition-PBW-generating-system}
		A \emph{PBW generating system} (or \emph{system of PBW generators}) for $H$ is an ordered family
		$\GG=(\{z_{\xi}\}_{\xi\in \Xi}, <)$ of elements of $H$ such that the family $\{z_V\}_{V\in \PP(\Xi,<)}$ forms a basis of $H$. The system is called \emph{finite} if the index set $\Xi$ is finite.
		
		Now let $H=\bigoplus_{\gamma\in \Gamma}H_{\gamma}$ be a $\Gamma$-graded algebra, where $\Gamma$ is an abelian monoid. A PBW generating system $\GG=(\{z_{\xi}\}_{\xi\in \Xi}, <)$ of $H$ is \emph{homogeneous} whenever each $z_{\xi}$ is homogeneous. In this case, the family $\{z_V\}_{V\in \PP(\Xi,<)}$ constitutes a homogeneous basis of $H$.
	\end{definition}
	
	Suppose $\GG=(\{z_\xi\}_{\xi \in\Xi},<)$ is a system of PBW generators of $H$. Then every $f\in H$ and every $g\in H\otimes H$ admit unique expressions
	\[
	f= \sum_{V\in \PP(\Xi,<)} a_{V} z_{V} \quad \text{and} \quad g = \sum_{(P,Q)\in \PP(\Xi, <)^{2}} b_{P,Q} z_{P} \otimes z_{Q}
	\]
	with coefficients $a_{V}, b_{P,Q}\in \Bbbk$. The sets
	\[
	\supp_{\GG}(f)=\{V \in \PP(\Xi,<) \mid a_{V}\neq0\} \quad \text{and} \quad \supp_{\GG}(g) = \{ (P,Q) \in \PP(\Xi, <)^{2} \mid b_{P,Q} \neq 0 \}
	\]
	are called the \emph{$\GG$-support} of $f$ and $g$, respectively. For any $U\in \PP(\Xi)$, let
	\[
	\phi_{U}\colon H\to \Bbbk
	\]
	denote the canonical linear map defined by $z_{V} \mapsto \delta_{U,V}$, where $\delta_{U,V}$ denotes the Kronecker delta.

	\begin{lemma}\label{GKdimension}
		Let $H=\bigoplus_{\gamma\in \Gamma}H_{\gamma}$ be a connected $\Gamma$-graded algebra with $\Gamma$ a nontrivial free abelian monoid. Let $\GG=(\{z_{\xi}\}_{\xi\in \Xi},<)$ be a homogeneous PBW generating system of $H$. Then
		\[
		\GKdim H = \#(\Xi),
		\]
		where $\#(\Xi)$ stands for the cardinality of $\Xi$.
	\end{lemma}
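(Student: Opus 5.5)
The plan is to compare $H$, through its Hilbert series, with a (weighted) polynomial algebra, and then use the fact that GK-dimension of a connected graded algebra with finite-dimensional graded pieces can be read off from the growth of the graded pieces. First record the degree of each generator. Since $H$ is connected graded, $H_0=\Bbbk$. Each $z_\xi$ is homogeneous, and no $z_\xi$ can have degree $0$: otherwise $z_\xi\in\Bbbk$, and then the monomials $z_\emptyset=1$ and $z_\xi$ would be linearly dependent, which is impossible. So write $\deg z_\xi=\gamma_\xi\in\Gamma\setminus\{0\}$. Choose a $\mathbb{Z}$-basis of the free abelian monoid $\Gamma\cong\mathbb{N}^r$ and let $|\cdot|\colon\Gamma\to\mathbb{N}$ be total degree, so each $d_\xi:=|\gamma_\xi|\ge 1$. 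Regrading $H$ by $|\cdot|$ gives a connected $\mathbb{N}$-graded algebra in which the ordered monomials $z_V$, $V\in\PP(\Xi,<)$, are still a homogeneous basis.

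\textbf{Case $\Xi$ infinite.} For every $n$, pick $n$ distinct indices $\xi_1<\dots<\xi_n$. The ordered monomials $z_{\xi_1}^{a_1}\cdots z_{\xi_n}^{a_n}$ are linearly independent. Let $V$ be the span of $1,z_{\xi_1},\dots,z_{\xi_n}$. Then $V^k$ contains all such monomials with $\sum a_i\le k$, so $\dim V^k\ge\binom{k+n}{n}$. Hence $\GKdim H\ge n$ for every $n$, so $\GKdim H=\infty=\#(\Xi)$.

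\textbf{Case $\Xi$ finite, $\#\Xi=n$.} The lower bound $\GKdim H\ge n$ follows exactly as above. For the upper bound, let $D=\max_\xi d_\xi$ and take $W=\bigoplus_{1\le j\le D}H_j$ (total degree). This is finite-dimensional, because only finitely many ordered monomials have bounded total degree. Each $z_\xi$ lies in $W$, so $W$ generates $H$. Moreover $W^k\subseteq\bigoplus_{j\le kD}H_j$. That space is spanned by ordered monomials $z_{\xi_1}^{a_1}\cdots z_{\xi_n}^{a_n}$ with $\sum a_i d_{\xi_i}\le kD$, hence with $\sum a_i\le kD$. Their number is at most $\binom{kD+n}{n}$, which is polynomial of degree $n$ in $k$. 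Since $\GKdim$ may be computed with any finite-dimensional generating subspace, this gives $\GKdim H\le n$.

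The main obstacle is only bookkeeping. One must make sure the $\Gamma$-grading can be collapsed to an $\mathbb{N}$-grading with positive generator degrees; this is where connectedness and the freeness of $\Gamma$ are used. One must also ensure that the finitely many graded pieces below a given total degree are finite-dimensional, which follows from finiteness of $\Xi$ together with $d_\xi\ge1$. Beyond that, the argument is the standard comparison with a polynomial ring in $n$ variables.
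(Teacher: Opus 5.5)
Your proof is correct and takes the paper's route: you collapse the $\Gamma$-grading to an $\mathbb{N}$-grading by a degree map with trivial kernel, get the lower bound from linear independence of ordered monomials, and bound growth by counting monomials of bounded weight, where the paper instead cites a lemma of Zhou for the lower bound and the Krause--Lenagan and Artin--Tate--Van den Bergh results for growth from the Hilbert series $\prod_\xi(1-t^{\deg z_\xi})^{-1}$. One small slip: $\Gamma$ need not have finite rank, so $\Gamma\cong\mathbb{N}^r$ should read $\mathbb{N}^{(I)}$, but total degree works the same way there.
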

	\begin{proof}
		By \cite[Lemma 1.4]{Zh24}, we have $\GKdim H\geq \#(\Xi)$. Hence it suffices to treat the case $\#(\Xi)<\infty$. Let $\varphi\colon \Gamma\to \mathbb{N}$ be any monoid homomorphism satisfying $\varphi^{-1}(0)=\{0\}$. Such a homomorphism exists since $\Gamma$ is a free abelian monoid. Define the $\mathbb{N}$-graded algebra $\varphi_{!}(H)$ which coincides with $H$ as an algebra, with grading
		\[
		\varphi_{!}(H)_{m} = \sum_{\gamma\in \varphi^{-1}(m)} H_{\gamma},\quad m\geq0.
		\]
		Clearly $\varphi_{!}(H)$ is a connected $\mathbb{N}$-graded algebra with Hilbert series
		\[
		h_{\varphi_{!}(H)}(t) = \prod_{\xi\in \Xi} \big (1-t^{\deg(z_{\xi})}\big)^{-1}.
		\]
		We deduce
		\[
		\GKdim H=\GKdim \varphi_{!}(H) = \varlimsup_{n\to \infty} \log_n \dim \varphi_{!}(H)_{\leq n} = \#(\Xi).
		\]
		The last two equalities follow respectively from \cite[Lemma 6.1 (b)]{KL00} and \cite[Proposition 2.21]{ATV91}.
	\end{proof}

	\begin{definition}\label{Ore extension}
		Let $R$ be an algebra.
		\begin{enumerate}
			\item An \emph{Ore extension} of $R$ is an algebra $H$ containing $R$ as a subalgebra for which there exist $X\in H$, an automorphism $\sigma$ of $R$, and a $\sigma$-derivation $\delta$ of $R$ satisfying:
			\begin{enumerate}
				\item $H$ is a free left $R$-module with basis $\{1,X,X^{2},\dots\}$;
				\item $Xr=\sigma(r)X+\delta(r)$ for all $r\in R$.
			\end{enumerate}
			We write $R[X;\sigma,\delta]$ for such an Ore extension. We adopt the shorthand $H=R[X;\sigma]$ (resp.\ $H=R[X;\delta]$) when $\delta=0$ (resp.\ $\sigma=\operatorname{id}_R$).
			
			\item An \emph{$n$-step iterated Ore extension} (\emph{$n$-step IOE}) of $R$, where $n\geq 0$ is an integer, is an algebra $H$ equipped with an ascending chain of subalgebras
			\[
			R=H_{0} \subset H_{1} \subset \cdots \subset H_{n} =H
			\]
			such that $H_{i}$ is an Ore extension of $H_{i-1}$ for each $i=1,\dots, n$. The notation
			\[
			H=R[X_{1};\sigma_{1},\delta_{1}]\cdots [X_{n};\sigma_{n},\delta_{n}]
			\]
			indicates that $H_{0}=R$, $H_{n}=H$, and $H_{i}= H_{i-1}[X_{i};\sigma_{i},\delta_{i}]$ for all $i=1,\dots,n$. 
		\end{enumerate}
		Note that the $0$-step IOE of $R$ is $R$ itself, and a $1$-step IOE coincides with an Ore extension of $R$. We say that $H$ is an \emph{IOE}  of $R$ if $H$ is an $n$-step IOE of $R$ for some integer $n\geq 0$.
	\end{definition}
	
	Graded Ore extensions and graded iterated Ore extensions  of graded algebras are defined analogously: one requires $X,\sigma,\delta$ and $X_{i},\sigma_{i},\delta_{i}$ to be homogeneous of suitable degrees.
	
	\begin{lemma}\label{IOE facts}
		Let $H=\Bbbk[X_{1};\sigma_{1},\delta_{1}]\cdots[X_{n};\sigma_{n},\delta_{n}]$ be an IOE of $\Bbbk$. Let $z_{i} \in X_{i} + H_{i-1}$ for $i=1,\ldots, n$. Then the ordered family $\GG=(\{z_{i}\}_{i\in \II_{n}}, <)$ is a system of PBW generators of $H$, where $\II_{n}=\{1,\ldots, n\}$ and $<$ is the natural total order on $\II_{n}$ given by $1<2<\cdots< n$.
	\end{lemma}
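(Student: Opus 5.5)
We argue by induction on $n$, showing that for each $i$ the ordered monomials $z_1^{a_1}\cdots z_i^{a_i}$ with $a_j\ge 0$ form a basis of $H_i$. For $i=0$ the only such monomial is $1$, and $H_0=\Bbbk$, so the claim holds.

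For the inductive step, suppose the claim holds for $H_{i-1}$. By Definition~\ref{Ore extension}, $H_i=H_{i-1}[X_i;\sigma_i,\delta_i]$ is a free left $H_{i-1}$-module with basis $\{X_i^k\}_{k\ge0}$. The first step is to replace $X_i$ by $z_i=X_i+r$ with $r\in H_{i-1}$. We claim that $\{z_i^k\}_{k\ge 0}$ is again a free left $H_{i-1}$-basis of $H_i$. Filter $H_i$ by $F_k=\bigoplus_{j\le k}H_{i-1}X_i^j$. The commutation rule $X_i s=\sigma_i(s)X_i+\delta_i(s)$ gives $F_1F_k\subseteq F_{k+1}$, so the $F_k$ form an algebra filtration. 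Expanding $(X_i+r)^k$ and moving coefficients to the left with this rule yields
\[
z_i^k\in X_i^k+F_{k-1}.
\]
The transition matrix from $\{z_i^k\}$ to $\{X_i^k\}$ is therefore unitriangular over $H_{i-1}$ with respect to the filtration. By induction on $k$, each $X_i^k$ lies in $\sum_{j\le k}H_{i-1}z_i^j$, which gives spanning. For independence, suppose $\sum_j s_j z_i^j=0$ with top index $k$ and $s_k\neq 0$. Its image in $F_k/F_{k-1}\cong H_{i-1}X_i^k$ is $s_kX_i^k\neq 0$, a contradiction.

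Hence $H_i=\bigoplus_{k\ge0}H_{i-1}z_i^k$. By the inductive hypothesis, $H_{i-1}$ has basis $\{z_1^{a_1}\cdots z_{i-1}^{a_{i-1}}\}$. Since each summand $H_{i-1}z_i^k$ is a free left $H_{i-1}$-module of rank one, the products $z_1^{a_1}\cdots z_{i-1}^{a_{i-1}}z_i^{k}$ form a basis of $H_i$. These products are exactly the elements $z_V$ for $V\in\PP(\II_i,<)$, since non-decreasing words on $\II_i$ correspond bijectively to exponent tuples $(a_1,\dots,a_i)$. Taking $i=n$ completes the proof.

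The main obstacle is the filtration step, namely verifying that $z_i^k$ has leading term exactly $X_i^k$ with coefficient $1$. This uses only that $\sigma_i$ preserves $H_{i-1}$ and that no term of the expansion exceeds degree $k$, both of which follow directly from the Ore relation.
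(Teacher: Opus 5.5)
Your proof is correct. It is the routine verification the paper omits when it says the assertion ``follows directly'': induction on $i$, using the unitriangular relation $z_i^k\in X_i^k+F_{k-1}$ to show that $\{z_i^k\}_{k\ge 0}$ is again a free left $H_{i-1}$-basis of $H_i$.
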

	
	\begin{proof}
		The assertion follows directly.
	\end{proof}

	\subsection{Order relations on words}

	Recall that a partial order $<$ on a set $X$ is \emph{well-founded} if every nonempty subset of $X$ possesses a minimal element for $<$, or equivalently, if there exists no infinite strictly descending chain $x_{1}> x_{2}> x_{3}> \cdots$ in $X$. A \emph{well order} is defined as a well-founded total order.
	
	Let $(\Xi, <)$ be a totally ordered set. Denote by $\PP(\Xi)$ the set of all words on $\Xi$. This set forms a monoid under word concatenation. The concatenation of two words $U$ and $V$ is denoted by $UV$. We say that $U$ is a \emph{prefix} (resp.\ \emph{suffix}) of $V$ if $UW = V$ (resp.\ $WU = V$) for some word $W$. A prefix or suffix $U$ of $V$ is said to be \emph{proper} if $U \neq V$.

	We first extend $<$ to partial orders $<_{\mathfrak{l}}$ and $<_{\rr}$ on $\PP(\Xi)$ as follows. For $U,V\in \PP(\Xi)$,
	\begin{eqnarray*}
		U<_{\mathfrak{l}} V &\Longleftrightarrow&
		U=W_{1}\eta W_{2},\quad V=W_{1}\xi W_{3},\quad \eta, \xi \in\Xi,\ \eta<\xi,\ W_{i}\in\PP(\Xi);\\
		U<_{\rr } V &\Longleftrightarrow&
		U=W_{1}\eta W_{3},\quad V=W_{2}\xi W_{3},\quad \eta, \xi \in\Xi,\ \eta<\xi,\ W_{i}\in\PP(\Xi).
	\end{eqnarray*}
	For all words $U,V,W_{1},W_{2},W_{3}$, we have
	\begin{eqnarray*}
		U<_{\mathfrak{l}} V    \Longrightarrow  W_{1} U W_{2} <_{\mathfrak{l}} W_{1} V W_{3}, \qquad
		U<_{\rr } V    \Longrightarrow  W_{1} U W_{3} <_{\rr } W_{2} V W_{3}.
	\end{eqnarray*}
	Neither $<_{\mathfrak{l}}$ nor $<_{\rr}$ is a total order.
	
	The \emph{left lexicographic order} $<_{\mathfrak{l}, \lex}$ and the
	\emph{right lexicographic order} $<_{\rr , \lex}$ on $\PP(\Xi)$
	relative to $<$ are refinements of $<_{\mathfrak{l}}$ and $<_{\rr}$, respectively. They are defined by: for $U,V\in \PP(\Xi)$,
	\begin{eqnarray*}
		U<_{\mathfrak{l},\lex} V & \Longleftrightarrow &
		U\text{ is a proper prefix of }V \text{ or } U <_{\mathfrak{l}} V; \\
		U<_{\rr ,\lex} V & \Longleftrightarrow &
		U\text{ is a proper suffix of }V \text{ or } U <_{\rr } V.
	\end{eqnarray*}
	The order $<_{\mathfrak{l}, \lex}$ (resp.\ $<_{\rr , \lex}$) is compatible with concatenation on the left (resp.\ right), but not on the opposite side. Both are total orders; however, neither is a well order whenever $\Xi$ contains more than one element.
	
	\begin{lemma}\label{order-compare-rearrangement}
		Let $U_{1}, U_{2}, V_{1}, V_{2} \in \PP(\Xi)$.
		\begin{enumerate}
			\item If $\Pi(U_{1})\leq _{\mathfrak{l},\lex} \Pi(V_{1})$ and $\Pi(U_{2})<_{\mathfrak{l}} \Pi(V_{2})$, then $\Pi(U_{1}U_{2}) <_{\mathfrak{l}} \Pi(V_{1} V_{2})$;
			\item If $\Pi(U_{1})\leq _{\mathfrak{l},\lex} \Pi(V_{1})$ and $\Pi(U_{2})<_{\mathfrak{l},\lex} \Pi(V_{2})$, then $\Pi(U_{1}U_{2}) <_{\mathfrak{l},\lex} \Pi(V_{1} V_{2})$;
			\item If $\Pi(U_{1})\leq _{\rr ,\lex} \Pi(V_{1})$ and $\Pi(U_{2})<_{\rr } \Pi(V_{2})$, then $\Pi(U_{1}U_{2}) <_{\rr } \Pi(V_{1} V_{2})$;
			\item If $\Pi(U_{1})\leq _{\rr ,\lex} \Pi(V_{1})$ and $\Pi(U_{2})<_{\rr ,\lex} \Pi(V_{2})$, then $\Pi(U_{1}U_{2}) <_{\rr ,\lex} \Pi(V_{1} V_{2})$.
		\end{enumerate}
		Here, $\Pi:\PP(\Xi)\to \PP(\Xi,<)$ is  the rearrangement map given in \eqref{rearrangement-map}.
	\end{lemma}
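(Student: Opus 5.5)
The plan is to reduce everything to a statement about inserting letters into a single non-decreasing word, and then chain two one-variable comparisons. For $W\in\PP(\Xi)$ write $m_W\colon\Xi\to\mathbb{N}$ for its letter multiplicities. Then $\Pi(UV)$ depends only on $m_U+m_V$, and two non-decreasing words are compared by the least letter $\xi$ at which their multiplicities differ. Concretely, for $A,B\in\PP(\Xi,<)$ let $\xi$ be the least letter with $m_A(\xi)\neq m_B(\xi)$. This exists because the supports are finite. Then $A<_{\mathfrak{l}}B$ holds exactly when $m_A(\xi)>m_B(\xi)$ and $B$ contains a letter $>\xi$. Likewise, $A$ is a proper prefix of $B$ exactly when $m_A(\xi)<m_B(\xi)$ and $A$ has no letter $>\xi$. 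I will first record this dictionary, since it turns all four parts into bookkeeping with multiplicities.

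The second step is two transitivity facts for non-decreasing words $X,Y,Z$:
\[
X<_{\mathfrak{l}}Y\le_{\mathfrak{l},\lex}Z\ \Longrightarrow\ X<_{\mathfrak{l}}Z,
\qquad
X<_{\mathfrak{l},\lex}Y\le_{\mathfrak{l},\lex}Z\ \Longrightarrow\ X<_{\mathfrak{l},\lex}Z.
\]
For the first, look at the first position $i$ where $X$ and $Y$ differ. If $Y$ is a prefix of $Z$, then $Z$ agrees with $Y$ up to position $|Y|\ge i$, so $X<_{\mathfrak{l}}Z$ at position $i$. If instead $Y<_{\mathfrak{l}}Z$ at position $j$, then $X<_{\mathfrak{l}}Z$ at position $\min(i,j)$. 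The second fact is just transitivity of the total order $<_{\mathfrak{l},\lex}$. With these in hand, part (1) will follow from the chain
\[
\Pi(U_1U_2)\;<_{\mathfrak{l}}\;\Pi(U_1V_2)\;\le_{\mathfrak{l},\lex}\;\Pi(V_1V_2),
\]
and part (2) from the same chain with $<_{\mathfrak{l},\lex}$ in the first link. The order of the two links is chosen so that the strict $<_{\mathfrak{l}}$ comparison comes first, which is the direction in which the transitivity above works.

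The third step is the one-variable monotonicity: for non-decreasing $A,B$ and an arbitrary word $W$, show that $A<_{\mathfrak{l}}B$ implies $\Pi(AW)<_{\mathfrak{l}}\Pi(BW)$, and that $A\le_{\mathfrak{l},\lex}B$ implies $\Pi(WA)\le_{\mathfrak{l},\lex}\Pi(WB)$. For the $<_{\mathfrak{l}}$ statement I use the dictionary. Adding $m_W$ to both sides does not move the least differing letter $\xi$, and it preserves the strict inequality $m_A(\xi)>m_B(\xi)$. Also, a letter $>\xi$ in $B$ remains such a letter in $BW$. Hence $\Pi(AW)<_{\mathfrak{l}}\Pi(BW)$. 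Parts (3) and (4) are obtained from (1) and (2) by reversing all words and the order on $\Xi$. This exchanges prefixes with suffixes and $<_{\mathfrak{l}}$ with $<_{\rr}$, so no separate argument should be needed.

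The main obstacle is the proper-prefix case of the $\le_{\mathfrak{l},\lex}$ link. There $m_A\le m_B$ with $m_A(\xi)<m_B(\xi)$, and all extra letters of $B$ are at least $\max A$. After adding $m_W$, the least differing letter is still $\xi$, with $m_{WA}(\xi)<m_{WB}(\xi)$. The required conclusion $\Pi(WA)<_{\mathfrak{l},\lex}\Pi(WB)$ then needs $WA$ to have no letter $>\xi$. By the dictionary, this is the only way a word with fewer copies of $\xi$ can be the smaller one. I will try first to verify this directly from the dictionary. Before relying on it, I would test it on small instances with $W$ having letters larger than $\xi$: for $U_1=\xi$, $V_1=\xi\eta$, $U_2=\zeta$, $V_2=\omega$ with $\xi<\eta<\zeta<\omega$ (here $\Pi(U_2)<_{\mathfrak{l}}\Pi(V_2)$), the words $\Pi(U_1U_2)=\xi\zeta$ and $\Pi(V_1V_2)=\xi\eta\omega$ satisfy $\xi\eta\omega<_{\mathfrak{l}}\xi\zeta$, the reverse of what part (1) asserts. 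If no reading of the prefix convention repairs this, I would record the counterexample rather than claim the prefix case, and then check that wherever the lemma is invoked later only the case $\Pi(U_1)=\Pi(V_1)$ or $\Pi(U_1)<_{\mathfrak{l}}\Pi(V_1)$ is used. In that case both links above are fully covered by the third step.
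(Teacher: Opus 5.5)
Your left-hand dictionary and your counterexample are correct. With $\xi<\eta<\zeta<\omega$, the words $U_1=\xi$, $V_1=\xi\eta$, $U_2=\zeta$, $V_2=\omega$ satisfy the hypotheses of (1) and (2), yet $\Pi(V_1V_2)=\xi\eta\omega<_{\mathfrak{l}}\xi\zeta=\Pi(U_1U_2)$. So parts (1) and (2) are false as written. The paper's proof, ``verification is straightforward,'' overlooks this. The error is harmless there, since only (3) and (4) are ever invoked (in Lemma~\ref{rearrangement}, Proposition~\ref{comultiplication} and Corollary~\ref{core facts}). Your fallback of checking that later uses avoid the prefix case is therefore aimed at the wrong half.

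The genuine gap is that you dispose of (3) and (4) by a reflection that does not exist. Reversing only the words keeps $<$, but it sends non-decreasing words to non-increasing ones, so it does not commute with $\Pi$. Reversing the words (write $\rho$) and passing to the opposite order does commute with $\Pi$. However, it turns $U<_{\rr}V$ into $\rho(V)<^{\mathrm{op}}_{\mathfrak{l}}\rho(U)$, which reverses the direction. Meanwhile ``$U$ is a proper suffix of $V$'' becomes ``$\rho(U)$ is a proper prefix of $\rho(V)$,'' which keeps the direction. So $<_{\rr,\lex}$ is carried to the left order in which proper prefixes count as \emph{larger}, not to $<_{\mathfrak{l},\lex}$. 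Had your reflection been valid, your own counterexample would have refuted (3) and (4). In fact both are true, and the suffix case is genuinely needed later: in Lemma~\ref{rearrangement} the correction term $M_j=\xi$, a proper suffix of $\eta\xi$, is allowed.

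The missing idea is the right-hand version of your dictionary. For distinct $A,B\in\PP(\Xi,<)$, let $\xi$ be the \emph{largest} letter with $m_A(\xi)\neq m_B(\xi)$. Reading from the right, $A$ and $B$ share all letters $>\xi$ and then $\min\{m_A(\xi),m_B(\xi)\}$ copies of $\xi$. Hence $A<_{\rr,\lex}B$ if and only if $m_A(\xi)<m_B(\xi)$; the proper-suffix case is exactly when $A$ has no letter $<\xi$. Moreover, $A<_{\rr}B$ if and only if in addition $A$ has a letter $<\xi$.

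So on non-decreasing words, $<_{\rr,\lex}$ is the lexicographic order on multiplicity functions with larger letters more significant. This order is invariant under adding $m_W$, and (4) follows from $m_{U_1}+m_{U_2}\le m_{V_1}+m_{U_2}<m_{V_1}+m_{V_2}$.

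For (3), let $\alpha$ be the decisive letter for $(\Pi(U_2),\Pi(V_2))$. If $\Pi(U_1)\neq\Pi(V_1)$, let $\beta$ be the decisive letter for that pair. The decisive letter for the sums is then $\alpha$ or $\max\{\alpha,\beta\}$, in either case $\ge\alpha$, and the $U$-side has the smaller multiplicity there. Since $U_2$ contains a letter $<\alpha$, the comparison is strict in $<_{\rr}$.

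This explains the asymmetry you ran into. On the right, the suffix clause agrees with the multiplicity comparison; on the left, the prefix clause runs against it. Declaring proper prefixes larger, which is the true mirror of $<_{\rr,\lex}$, would make (1) and (2) hold by the same argument.
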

	\begin{proof}
		Verification is straightforward.
	\end{proof}
	
	In the sequel, let $\Gamma= (\Gamma,<)$ denote a nontrivial well-ordered free abelian monoid. The well order $<$ satisfies the monotonicity condition: for all $\gamma_{1}, \gamma_{2}, \gamma \in \Gamma$,
	\[
	\gamma_{1} <\gamma_{2} \Longrightarrow \gamma_{1} +\gamma <\gamma_{2} +\gamma.
	\]
	Let $\dd\colon\PP(\Xi)\to \Gamma$ be a monoid homomorphism. Define total orders $<_{\mathfrak{l}, \dd}, \, <_{\rr , \dd}$ on $\PP(\Xi)$ as follows. For $U,V\in \PP(\Xi)$,
	\begin{eqnarray*}
		U <_{\mathfrak{l}, \dd} V \quad(\text{resp. } U <_{\rr , \dd} V)
		& \Longleftrightarrow &
		\left\{
		\begin{aligned}
			& \dd(U)<\dd(V), \quad \text{or} \\
			&\dd(U)=\dd(V) \text{ and } U <_{\mathfrak{l}} V \quad(\text{resp. } U <_{\rr } V).
		\end{aligned}\right .
	\end{eqnarray*}
	These orders are compatible with word concatenation on both sides.
	
	\begin{lemma}\label{wellorder}
		Let $\dd\colon\PP(\Xi)\to \Gamma$ be a monoid homomorphism satisfying $\dd(\xi)\neq 0$ for all $\xi\in \Xi$. Suppose that for each $\gamma\in \Gamma$, the restriction of $<$ to the subset $\{\xi \in \Xi \mid\dd(\xi)=\gamma\}$ is well-founded. Then the total orders $<_{\mathfrak{l}, \dd}$ and $<_{\rr , \dd}$ on $\PP(\Xi)$ are well-founded.
	\end{lemma}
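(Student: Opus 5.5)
Suppose, for contradiction, that $U_1 >_{\mathfrak{l},\dd} U_2 >_{\mathfrak{l},\dd} \cdots$ is an infinite strictly descending chain in $\PP(\Xi)$; the case of $<_{\rr,\dd}$ is symmetric, with words read from right to left. We aim for a contradiction in three steps.

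\textbf{Step 1: fix the weight.} Since $\Gamma$ is well-ordered and $\dd(U_{k+1})\le \dd(U_k)$, the weights $\dd(U_k)$ are eventually constant. Discarding finitely many terms, we may assume $\dd(U_k)=\gamma$ for all $k$. Then $U_k >_{\mathfrak{l}} U_{k+1}$ for all $k$.

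\textbf{Step 2: bound lengths and letters.} $\Gamma$ is a free abelian monoid, so $\gamma$ has only finitely many decompositions $\gamma=\gamma_1+\cdots+\gamma_n$ into nonzero summands, and $n$ is bounded by the total coordinate length of $\gamma$. Each letter has $\dd(\xi)\neq 0$. Hence the lengths of the $U_k$ are bounded, and the sequence of letter-weights $(\dd(\xi_1),\dots,\dd(\xi_n))$ of $U_k$ takes only finitely many values. Passing to a subsequence, which remains strictly descending because $<_{\mathfrak{l}}$ is transitive on words of equal length, we may assume every $U_k$ has the same length $n$ and the same weight pattern $(\gamma_1,\dots,\gamma_n)$. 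Consequently the $i$-th letter of every $U_k$ lies in $\Xi_{\gamma_i}:=\{\xi\mid \dd(\xi)=\gamma_i\}$, which is well-ordered by hypothesis.

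\textbf{Step 3: reduce to a lexicographic product.} On words of fixed length $n$, $<_{\mathfrak{l}}$ is exactly the lexicographic order (first differing letter). The lexicographic order on $\Xi_{\gamma_1}\times\cdots\times\Xi_{\gamma_n}$ is a well-order, by induction on $n$. This follows from the standard fact that the lexicographic product of two well-orders is well-founded: take the first coordinates of the chain, which are non-increasing and hence eventually constant, then recurse on the remaining coordinates. This contradicts the infinite descending chain.

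The main obstacle is Step 2, namely justifying that a fixed $\gamma$ admits only finitely many ordered decompositions into nonzero parts. Freeness gives $\Gamma\cong\mathbb{N}^{(I)}$, and $\gamma$ has finite support with finitely many coordinatewise-smaller elements; this also confirms that transitivity of $<_{\mathfrak{l}}$ is only needed on words of equal length and equal weight. For $<_{\rr,\dd}$ the same argument applies with the last differing letter, reading coordinates from the right.
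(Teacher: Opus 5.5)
Your proof is correct, but it is organized differently from the paper's. The paper inducts on the weight $\gamma$ in the well-ordered monoid $\Gamma$. In a non-increasing sequence $V_i=\xi_iW_i$ of fixed weight $\gamma$, the first letters $\xi_i$ are non-increasing and have weights in the finite set $\Gamma_\gamma$ of summands of $\gamma$, so they stabilize. The tails $W_i$ then form a non-increasing sequence of strictly smaller weight, and the induction hypothesis on $\gamma$ finishes.

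You instead handle the combinatorics up front. Freeness of $\Gamma$ bounds the lengths and leaves only finitely many letter-weight patterns. Pigeonhole then gives a subsequence of fixed length and fixed pattern, and you quote the standard fact that a finite lexicographic product of well-orders is a well-order. Both arguments use the same finiteness of decompositions of $\gamma$ and the same ``first coordinate stabilizes, then recurse'' mechanism; in your version that mechanism sits inside the lexicographic-product fact.

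What each route buys:
\begin{itemize}
\item Your route avoids transfinite induction over $\Gamma$.
\item It also avoids the step $\dd(W_p)<\gamma$. The paper uses that step implicitly; it relies on every nonzero element of $\Gamma$ being $>0$, which follows from well-ordering plus monotonicity. Your length bound comes directly from freeness instead.
\item The paper's route avoids extracting a subsequence and never needs an explicit length bound.
\end{itemize}

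One small slip: when you pass to the subsequence, the skipped intermediate terms may have lengths different from $n$. So the justification should be transitivity of $<_{\mathfrak{l}}$ (equivalently of $<_{\mathfrak{l},\dd}$) on all words of weight $\gamma$, not only on words of equal length. That transitivity does hold, and it is part of the assertion that $<_{\mathfrak{l},\dd}$ is a total order, so nothing breaks.
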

	
	\begin{proof}
		The argument adapts the proof of \cite[Lemma 3.1]{LZ25}. We treat $<_{\mathfrak{l}, \dd}$; the case of $<_{\rr , \dd}$ is analogous. Since $(\Gamma,<)$ is well-ordered, it suffices to show that any non-increasing sequence
		\[
		V_1 \geq_{\mathfrak{l}, \dd} V_2 \geq_{\mathfrak{l}, \dd} V_3 \geq_{\mathfrak{l}, \dd} \cdots
		\]
		with constant $\dd(V_i)=\gamma$ eventually stabilizes. We proceed by induction on $\gamma\in\Gamma$ with respect to $<$. The case $\gamma=0$ is trivial, as all $V_i=\emptyset$. Now assume $\gamma>0$, so each $V_{i}\neq \emptyset$. Write $V_{i}=\xi_{i}W_{i}$, where $\xi_{i}\in\Xi$ is the first letter of $V_i$ and $W_{i}\in\PP(\Xi)$. We obtain
		\[
		\xi_{1}\geq\xi_{2} \geq\xi_{3} \geq\cdots.
		\]
		Set $\Gamma_{\gamma}:=\{\alpha\in \Gamma \mid \exists\,\beta\in\Gamma,\ \alpha+\beta =\gamma\}$. The set $\Gamma_{\gamma}$ is finite, and each $\dd(\xi_{i})\in \Gamma_{\gamma}$. By the hypothesis on $<$, there exists $p\ge1$ such that $\xi_{p}=\xi_{p+1}=\cdots$. Consequently
		\[
		W_p\geq_{\mathfrak{l}, \dd} W_{p+1}\geq_{\mathfrak{l}, \dd} \cdots,
		\]
		and $\dd(W_{p})=\dd(W_{p+1})=\cdots < \gamma$. The induction hypothesis yields stabilization.
	\end{proof}
	
	\section{Structural conditions for PBW generators}
	\label{sec:triangular}
	
	In this section, we introduce fundamental structural properties of PBW generating systems and establish several key observations, which serve as cornerstones of this paper. As will be seen in Section \ref{sec:IHOE-structure}, these properties hold for the associated PBW generating systems of iterated Hopf Ore extensions of $\Bbbk$. Consequently, the results and techniques developed in this and the subsequent section can be applied to investigate the structure of iterated Hopf Ore extensions of $\Bbbk$.

	Throughout this section, unless specified otherwise, $H$ denotes an algebra, $\GG=(\{z_{\xi}\}_{\xi\in \Xi}, <)$ stands for an ordered family of elements in $H$, $\Delta\colon H\to H\otimes H$ is an algebra homomorphism, $\Gamma=(\Gamma,<)$ denotes a nontrivial well-ordered free abelian monoid, and $\dd\colon \Xi \to \Gamma$ is a \emph{positive map}, meaning $\dd(\xi)>0$ for all $\xi\in \Xi$. Notation and conventions from the previous section remain in force. In particular, recall that $z_{V}:=z_{\xi_{1}}\cdots z_{\xi_{n}}$ for any word $V=\xi_{1}\cdots \xi_{n}\in \PP(\Xi)$. Moreover, $\Pi\colon \PP(\Xi) \to \PP(\Xi,<)$ denotes the rearrangement map.
	
	\begin{convention}
		Recall that $\PP(\Xi)$ is the free monoid on $\Xi$ with multiplication given by word concatenation. Therefore, for any map $\dd\colon \Xi \to \Gamma$, there exists a unique monoid homomorphism $\overline{\dd}\colon \PP(\Xi) \to \Gamma$ satisfying $\overline{\dd}|_{\Xi}=\dd$. Explicitly,
		$\overline{\dd}(\xi_{1}\cdots\xi_{n})=\dd(\xi_{1})+\cdots+\dd(\xi_{n})$
		for every word $\xi_{1}\cdots \xi_{n}\in \PP(\Xi)$. In what follows, we simply write $\dd$ instead of $\overline{\dd}$ by abuse of notation.
	\end{convention}
	
	\subsection{Definitions and basic examples}
	
	\begin{definition}\label{definition-triangular}
		An ordered family $\GG=(\{z_{\xi}\}_{\xi\in \Xi}, <)$ of elements of $H$ is
		\begin{enumerate}
			\item called \emph{triangular} if for any pair $\eta,\xi \in \Xi$ with $\eta < \xi$,
			\[
			z_\xi z_\eta \in \Bbbk^{\times} \cdot z_\eta z_\xi + \Span\big\{z_{W} \,\big|\, W \in \PP_{+}(\Xi, <),\; W<_{\rr,\lex} \eta\xi \big\};
			\]
			\item called \emph{$\dd$-triangular} if for every $\gamma\in \Gamma$, the restriction of $<$ to $\dd^{-1}(\gamma) \subseteq \Xi$ is well-founded, and for any pair $\eta,\xi \in \Xi$ with $\eta < \xi$,
			\[
			z_\xi z_\eta \in  \Bbbk^{\times} \cdot z_\eta z_\xi + \Span\big\{z_{W} \,\big|\, W \in \PP_{+}(\Xi, <),\; \dd(W) \leq \dd(\eta\xi),\; W<_{\rr,\lex} \eta\xi \big\};
			\]
			\item called \emph{strongly $\dd$-triangular} if for every $\gamma\in \Gamma$, the restriction of $<$ to $\dd^{-1}(\gamma) \subseteq \Xi$ is well-founded, and for any pair $\eta,\xi \in \Xi$ with $\eta < \xi$,
			\[
			z_\xi z_\eta \in  \Bbbk^{\times} \cdot z_\eta z_\xi + \Span\big\{z_{W} \,\big|\, W \in \PP_{+}(\Xi, <),\; \dd(W) < \dd(\eta\xi),\; W<_{\rr,\lex} \eta\xi \big\};
			\]
			\item called \emph{$\Delta$-dominated} if for any $\xi\in \Xi$,
			\begin{eqnarray*}
				\Delta(z_\xi) \in 1 \otimes z_\xi+z_\xi \otimes 1  + \Span \big \{z_{P}\otimes z_{Q} \,\big|\, P,Q\in \PP_{+}(\Xi, <),\; P,Q <_{\rr} \xi \big\};
			\end{eqnarray*}
			\item called \emph{$(\Delta,\dd)$-dominated} if for any $\xi\in \Xi$,
			\begin{eqnarray*}
				\Delta(z_\xi) \in 1 \otimes z_\xi+z_\xi \otimes 1  + \Span \big \{z_{P}\otimes z_{Q} \,\big|\, P,Q\in \PP_{+}(\Xi, <),\; \dd(PQ) \leq \dd(\xi),\; P, Q <_{\rr} \xi \big\}.
			\end{eqnarray*}
		\end{enumerate}
	\end{definition}
	
	\begin{example}
		Consider a Cartan matrix $A=(a_{ij})_{i,j\in \II_{n}}$ of size $n$ over $\Bbbk=\mathbb{C}$. Let $(d_{i})_{i\in \II_{n}}$ be the sequence of coprime positive integers such that the matrix $(d_{i}a_{ij})$ is symmetric. The quantized universal enveloping algebra $U_{q}$ attached to $A$ is defined as the algebra generated by $E_{i}, F_{i}, K_{i}, K_{i}^{-1}$ for $1\leq i\leq n$, subject to a set of defining relations. Let $U_{q}^{+}$ denote the positive part of $U_{q}$, i.e., the subalgebra of $U_{q}$ generated by $E_{1},\dots,E_{n}$. A well-behaved system of PBW generators for $U_{q}^{+}$ may be constructed as follows. Let $\Phi\subseteq \mathbb{R}^{n}$ be the root system associated with $A$, and let $W$ be the Weyl group of $\Phi$. The inner product on $\mathbb{R}^{n}$ is specified by
		\[\langle \alpha_{i}, \alpha_{j}\rangle = d_{i} a_{ij},\quad i,j=1,\dots, n,\]
		where $\alpha_{1},\dots,\alpha_{n}$ form the standard basis of $\mathbb{R}^{n}$. Let $w_{0}$ be the longest element of $W$, and take a reduced expression $w_{0}=s_{i_{1}}\cdots s_{i_{N}}$, where $s_{i}\in W$ denotes the simple reflection corresponding to $\alpha_{i}$. Every positive root appears exactly once in the list
		\[\beta_{1}=\alpha_{i_{1}},\quad \beta_{2} = s_{i_{1}}(\alpha_{i_{2}}),\quad \ldots,\quad \beta_{N} = s_{i_{1}}\cdots s_{i_{N-1}}(\alpha_{i_{N}}).\]
		Let $T_{i}\colon U_{q}\to U_{q}$ be Lusztig’s automorphism for each $i=1,\dots, n$. Define
		\begin{eqnarray*}
			z_{1} =E_{i_{1}},\quad z_{2} = T_{i_{1}}(E_{i_{2}}),\quad \ldots,\quad z_{N}= T_{i_{1}}\cdots T_{i_{N-1}}(E_{i_{N}}).
		\end{eqnarray*}
		These elements are known as Lusztig’s root vectors. It is well-known that \[\{z_{1}^{r_{1}}\cdots z_{N}^{r_{N}} \mid r_{1},\dots, r_{N}\geq 0\}\] forms a basis of $U_{q}^{+}$. Furthermore, for all $1\leq i<j\leq N$,
		\[z_{j}z_{i} - q^{-\langle \beta_{i}, \beta_{j} \rangle} z_{i}z_{j} \in \Span\biggl\{ \prod_{\ell=i+1}^{j-1} z_{\ell}^{r_{\ell}} \;\bigg|\; r_{\ell}\geq 0,\;\sum_{\ell=i+1}^{j-1} r_{\ell} \beta_{\ell} =\beta_{i}+\beta_{j}\biggr\}.\]
		This formula is due to Levendorskii–Soibelman \cite{LS91}. Thus, the ordered family $\GG:= \bigl(\{z_{i}\}_{i\in \II_{N}},<\bigr)$, where $<$ denotes the total order on $\II_{N}$ defined by $1< \cdots < N$, forms a triangular system of PBW generators for $U_{q}^{+}$. We refer the reader to \cite[Section 9]{DeP93} for further details.
	\end{example}
	
	In \cite{Kh99}, Kharchenko introduced a combinatorial method built upon Lyndon words, enabling the construction of well-behaved PBW generating systems for algebras subject to mild conditions. This approach was further extended in \cite{ZSL20, Zh24, LZ25}. The following result is an outcome of this method. 
	
	\begin{prop}\label{Graded-imply-PBW}
		Let $H=\bigoplus_{\gamma\in\Gamma} H_{\gamma}$ be a connected $\Gamma$-graded algebra. Suppose $\Delta\colon H\to H\otimes H$ is a homomorphism of $\Gamma$-graded algebras such that
		\[
		\Delta(f)\in 1\otimes f+f\otimes 1+ H_{+} \otimes H_{+}
		\]
		for every homogeneous element $f\in H$, where $H_{+}= \bigoplus_{\gamma\in \Gamma,\gamma\neq 0} H_{\gamma}$. Then $H$ possesses a homogeneous PBW generating system $\GG=(\{z_\xi\}_{\xi \in\Xi},<)$ which is $\dd$-triangular and $(\Delta,\dd)$-dominated, where $\dd\colon \Xi\to \Gamma$ is the positive map given by $\dd(\xi) =\deg(z_{\xi})$.
	\end{prop}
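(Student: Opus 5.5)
We plan to follow Kharchenko's Lyndon-word construction in the form developed in \cite{Kh99,ZSL20,LZ25}. Choose a minimal homogeneous generating set $X$ of $H$: a homogeneous basis of a graded complement of $H_+^2$ in $H_+$, indexed by a set ordered compatibly with degree and well-ordered within each degree. The free algebra $\Bbbk\langle X\rangle$ carries the induced $\Gamma$-grading, and there is a graded surjection $\pi\colon\Bbbk\langle X\rangle\to H$. Order words of $X$ by the deg-lex order $<_{\rr,\dd}$ of Lemma \ref{wellorder} on the alphabet $X$; it is well-founded and compatible with concatenation. Following \cite{Kh99}, call a Lyndon word $u$ \emph{hard} if the super-letter $[u]$ (the iterated bracket given by the standard bracketing) is not, modulo $\ker\pi$, a combination of smaller monomials in the super-letters that are already hard, in an appropriate order and degree. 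Let $\Xi$ be the set of hard super-letters, $z_\xi$ the image of the bracket $[u]$, $\dd(\xi)=\deg z_\xi$, and let $<$ on $\Xi$ be the lexicographic order on Lyndon words (possibly reversed, to match the $<_{\rr,\lex}$ convention). Then $\dd$ is positive, and $<$ restricted to each $\dd^{-1}(\gamma)$ is well-founded, since each graded piece involves only finitely many letters once $\Gamma$ is well-ordered with finite $\Gamma_\gamma$.

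\textbf{Step 1 (PBW basis).} Using the Shirshov--Lyndon factorization, we show that every word is a combination of non-increasing products of super-letters of the same degree that are not larger. Reducing the non-hard super-letters by induction along the well-founded order, we obtain that the ordered monomials $z_V$, $V\in\PP(\Xi,<)$, span $H$. Linear independence comes from the choice of hardness, which is a Gröbner-type reduction: a nontrivial relation among ordered monomials would have a leading word that is either a non-hard super-letter or a product with a non-hard tail. In the height-free characteristic-zero setting, as in \cite{ZSL20}, this is excluded by the definition of hardness combined with the triangularity in Step 2.

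\textbf{Step 2 ($\dd$-triangularity).} For Lyndon words $u<v$ we use the super-letter commutation $[v][u]-[u][v]$. By Kharchenko's lemma, this is a combination of super-letter monomials whose words are smaller than $uv$ and have the same degree. After reducing via Step 1, we obtain only terms $z_W$ with $\dd(W)=\dd(\eta\xi)$ and $W<_{\rr,\lex}\eta\xi$. The leading coefficient is $1\in\Bbbk^\times$, because the algebra is graded and not braided.

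\textbf{Step 3 ($(\Delta,\dd)$-domination).} Since $\Delta$ is graded and each $f$ satisfies $\Delta(f)\in 1\otimes f+f\otimes1+H_+\otimes H_+$, we can expand $\Delta(z_\xi)$ in the basis of Step 1 and obtain $1\otimes z_\xi+z_\xi\otimes1+\sum z_P\otimes z_Q$ with $P,Q$ nonempty and $\dd(PQ)=\dd(\xi)$. It remains to show $P,Q<_{\rr}\xi$. We argue by induction on the bracket: for $[u]=[[u_1][u_2]]$, the comultiplication of a commutator of skew-primitive-like elements has cross terms built from constituents of $u_1,u_2$, and these are right-smaller than the letter $\xi$. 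Lemma \ref{order-compare-rearrangement} lets us compare rearranged products, and reducing through Step 2 only lowers the order.

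We expect Step 3 to be the main obstacle. The condition $P,Q<_{\rr}\xi$ is a comparison between words on $\Xi$ and a single letter, and it must survive the rewriting of arbitrary words into ordered monomials. Our first approach would be to prove the stronger claim that every component of $\Delta(z_\xi)-1\otimes z_\xi-z_\xi\otimes1$ lies in the span of $z_P\otimes z_Q$ with every letter of $P,Q$ strictly below $\xi$, using induction on $\dd(\xi)$ together with the well-foundedness from Lemma \ref{wellorder}.
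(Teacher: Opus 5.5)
Your outline follows the Lyndon-word route of the sources the paper simply cites for this statement (\cite[Proposition 5.5]{LZ25}, \cite[Theorem A]{ZSL20}). As a proof, however, the decisive step is missing.

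\textbf{Step 1: independence is not proved.} You never use $\Delta$ or $\operatorname{char}\Bbbk=0$ in Step 1. The linear independence of the ordered monomials $z_V$, $V\in\PP(\Xi,<)$, with \emph{unrestricted} exponents does not follow from hardness plus triangularity. Take $H=\Bbbk[x]/(x^{2})$ with $\deg x=1$: $[x]$ is the only hard super-letter and triangularity is vacuous, yet $z_x^{2}=0$.

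In Kharchenko's theory the basis consists of \emph{restricted} monomials, with exponents below the heights, and it is the coproduct that controls the heights. The hypothesis on $\Delta$ is exactly what excludes such examples (here $\Delta(x^{2})$ would contain $2x\otimes x\neq 0$). So you need an actual argument forcing every ordered product of hard super-letters to stay irreducible, in particular forcing all heights to be infinite.

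The standard argument is an induction on degree. Given a relation whose $<_{\rr,\dd}$-leading word $V$ has length at least $2$, apply $\Delta$ and look at the component $z_W\otimes z_{V/W}$ for a proper nonempty subword $W\mid V$. Both tensor factors live in lower degrees, where independence is already known. The leading-term estimates of Proposition \ref{comultiplication} and Corollary \ref{core facts}, run degree by degree, show this coefficient is $\binom{V}{W}$ times the leading coefficient. That is nonzero precisely because $\operatorname{char}\Bbbk=0$; in characteristic $p$, $\Bbbk[x]/(x^{p})$ with $x$ primitive is a genuine counterexample to the statement. Length-one leading words are excluded by hardness.

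\textbf{Step 3: your ordering does not give domination.} The cross terms of $\Delta(z_\xi)$ come from $\Delta$ of the letters in $X$, which need not be primitive, not only from constituents of $u_1,u_2$. Take the free algebra on $y_1<y_2$ (degree $1$, primitive) and $x$ (degree $2$) with $\Delta(x)=x\otimes1+1\otimes x+y_2\otimes y_2$. One computes
\[
\Delta([y_1x])=1\otimes[y_1x]+[y_1x]\otimes1+[y_1y_2]\otimes y_2+y_2\otimes[y_1y_2].
\]
Since $y_1x<_{\lex}y_2$, domination fails for the lexicographic order on Lyndon words. For the reversed order it already fails for $\Delta(x)$.

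Ordering $\Xi$ degree-first would make domination automatic, since every letter of $P$ has $\dd$-value at most $\dd(P)<\dd(\xi)$. But then triangularity breaks, already for two primitive letters: $z_{y_2}z_{y_1}=z_{y_1}z_{y_2}-z_{[y_1y_2]}$ with $[y_1y_2]$ above $y_2$. So the generating set and the order on hard super-letters must be chosen jointly and with care. Your proposed stronger claim, that all letters of $P,Q$ lie strictly below $\xi$ in the lexicographic order, is false in general.
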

	
	\begin{proof}
		This is a restatement of \cite[Proposition 5.5]{LZ25} under the additional assumption that $\Bbbk$ has characteristic zero; the core statement is established in \cite[Theorem A]{ZSL20}.
	\end{proof}
	
	\begin{lemma}\label{triangular-construct}
		Suppose $\GG=(\{z_\xi\}_{\xi \in\Xi},<)$ is triangular (resp.\ $\Delta$-dominated) and $\Xi$ is finite. Then $\GG$ is strongly $\dd$-triangular (resp.\ $(\Delta,\dd)$-dominated) for any positive map $\dd\colon\Xi\to \Gamma$ satisfying $\dd(\xi)\geq N\cdot \dd(\eta)$ whenever $\eta < \xi$, for some sufficiently large positive integer $N$.
	\end{lemma}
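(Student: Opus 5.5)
The plan is to check, for each condition, that every word appearing in the defining expansion of a triangular (resp.\ $\Delta$-dominated) system satisfies the extra degree constraint once $\dd$ grows fast enough along $<$. Since $\Xi$ is finite, the well-foundedness requirement on each $\dd^{-1}(\gamma)$ is automatic, and only finitely many expansions occur: one for each pair $\eta<\xi$, and one for each $\xi$. Hence only finitely many words $W$, $P$, $Q$ need to be controlled. Let $L$ be the maximal length of all words appearing with nonzero coefficient in these finitely many expansions. I will take any integer $N>L$, and in fact $N\ge 2$ as well. Fix a positive map $\dd$ with $\dd(\xi)\ge N\cdot\dd(\eta)$ whenever $\eta<\xi$.

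\textbf{Dominated case.} Take $(P,Q)$ in the support of $\Delta(z_\xi)-1\otimes z_\xi-z_\xi\otimes 1$, so that $P,Q<_{\rr}\xi$.
\begin{itemize}
\item Comparing a word $P=W_2\eta W_3$ with the one-letter word $\xi=W'\xi W''$ forces $W_3=W''=\emptyset$ and $W'=\emptyset$. So $P<_{\rr}\xi$ means precisely that the last letter of $P$ is $<\xi$. Since $P\in\PP_{+}(\Xi,<)$ is nondecreasing, every letter of $P$ is then $<\xi$, and likewise for $Q$.
\item Every letter $\zeta$ of $PQ$ therefore satisfies $\dd(\zeta)\le \dd(\xi)/N$ in the sense that $N\dd(\zeta)\le\dd(\xi)$.
\item Consequently $N\cdot\dd(PQ)\le \operatorname{len}(PQ)\cdot\dd(\xi)$. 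Here I use that $\Gamma$ is a well-ordered free abelian monoid with monotone order, so multiples and sums compare as in $\mathbb{N}$-valued arguments: from $\alpha\le\beta$ we get $k\alpha\le k\beta$ and sums of inequalities add.
\end{itemize}
This bound is not directly of the form $\dd(PQ)\le\dd(\xi)$, because division is unavailable in $\Gamma$. I therefore plan to argue by contradiction. Suppose $\dd(PQ)>\dd(\xi)$. Monotonicity then gives $N\dd(PQ)>N\dd(\xi)\ge \operatorname{len}(PQ)\dd(\xi)\ge N\dd(PQ)$, using $\operatorname{len}(PQ)\le 2L<N$ after enlarging $N$. This is a contradiction. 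To make the step rigorous, I need that $k\alpha<k\beta$ whenever $\alpha<\beta$, which follows by iterating monotonicity. I also need that $m\le n$ in $\mathbb{N}$ implies $m\gamma\le n\gamma$, which holds since $\gamma\ge0$ for all $\gamma$ (as $0$ is the least element: if $\gamma<0$ then $2\gamma<\gamma<0$ descends forever).

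\textbf{Triangular case.} Take $W$ in the support of $z_\xi z_\eta-cz_\eta z_\xi$ with $W<_{\rr,\lex}\eta\xi$, and aim to show $\dd(W)<\dd(\eta\xi)$. There are two possibilities.
\begin{itemize}
\item If $W$ is a proper suffix of $\eta\xi$, then $W=\xi$ (as $W\ne\emptyset$), and $\dd(\xi)<\dd(\eta)+\dd(\xi)$ by positivity and monotonicity.
\item Otherwise $W<_{\rr}\eta\xi$. Then either the last letter of $W$ is $<\xi$, in which case all letters of $W$ are $<\xi$ since $W$ is nondecreasing; or $W=W'\xi$ with $W'<_{\rr}\eta$, in which case the last letter of $W'$ is $<\eta$ and all letters of $W'$ are $<\eta$.
\end{itemize}
In the first subcase, $N\dd(W)\le L\dd(\xi)<N\dd(\xi)\le N\dd(\eta\xi)$, so $\dd(W)<\dd(\eta\xi)$ by the contrapositive of monotone scaling. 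In the second subcase, $N\dd(W')\le L\dd(\eta)<N\dd(\eta)$, giving $\dd(W')<\dd(\eta)$ and hence $\dd(W)=\dd(W')+\dd(\xi)<\dd(\eta\xi)$.

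The only delicate point I expect is this bookkeeping of ``division'' in an abstract ordered monoid, namely deducing $\alpha<\beta$ from $N\alpha<N\beta$. This follows from totality plus monotonicity: if $\alpha\ge\beta$ then $N\alpha\ge N\beta$.
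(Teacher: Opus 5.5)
Your proposal is correct and follows essentially the paper's route: fix one expansion for each of the finitely many relations, take $N$ larger than the lengths of all words occurring, and split $W<_{\rr,\lex}\eta\xi$ into the case where the last letter of $W$ is below $\xi$ and the case $W=W'\xi$ with $W'$ below $\eta$ (and, for domination, use that every letter of $P$ and $Q$ lies below $\xi$). The only difference is cosmetic: the paper bounds each sum by $N$ times the degree of the last letter, using that the growth condition makes $\dd$ order-preserving, whereas you sum $N\dd(\zeta)\le\dd(\xi)$ over the letters and then cancel the factor $N$ by monotonicity in $\Gamma$, which you justify correctly.
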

	\begin{proof}
		We first treat the case where $\GG$ is triangular; our argument adapts \cite[Subsection 5.1]{BZ22}. Fix an arbitrary pair $\eta< \xi \in \Xi$. Observe that for a word $W\in \PP(\Xi,<)$, the relation $W<_{\rr,\lex} \eta \xi$ holds if and only if either $W<_{\rr,\lex} \xi$ or $W=W'\xi$ with $W' <_{\rr,\lex} \eta$. Hence there exist finite sets
		\[
		\mathcal{A}(\eta,\xi) \subseteq \{W\in \PP(\Xi, <) \mid W <_{\rr,\lex} \eta\}
		\quad \text{and} \quad
		\mathcal{B}(\eta,\xi) \subseteq \{W\in \PP_{+}(\Xi, <) \mid W <_{\rr,\lex} \xi\}
		\]
		such that
		\[
		z_\xi z_\eta \in \Bbbk^{\times} \cdot z_\eta z_\xi + \Span\{z_{W}z_{\xi} \mid W \in \mathcal{A}(\eta,\xi)\} + \Span\{z_{W} \mid W \in \mathcal{B}(\eta,\xi)\}.
		\]
		Choose a positive integer $N$ with
		\[
		N > \max\big\{ \operatorname{length}(W) \;\big|\; W\in \bigcup_{ \substack{(\eta,\xi) \in \Xi\times \Xi \\ \eta < \xi}} \big( \mathcal{A}(\eta,\xi) \cup \mathcal{B}(\eta,\xi) \big) \big\}.
		\]
		Now let $\dd\colon \Xi\to \Gamma$ be a positive map satisfying the required inequality. For any $W =\eta_{1}\cdots \eta_{s} \in \mathcal{A}(\eta,\xi)$, we have $W\xi <_{\rr,\lex} \eta\xi$ and
		\[
		\dd(W\xi) = \dd(\eta_{1}) +\cdots + \dd(\eta_{s})  +\dd(\xi) < N\cdot  \dd(\eta_{s})  +\dd(\xi) \leq \dd(\eta)  +\dd(\xi)=\dd(\eta\xi).
		\]
		For any $W=\xi_{1}\cdots \xi_{t} \in \mathcal{B}(\eta,\xi)$, we have $W <_{\rr,\lex} \eta\xi$ and
		\[
		\dd(W) = \dd(\xi_{1}) +\cdots +\dd(\xi_{t}) < N\cdot \dd(\xi_{t}) \leq \dd(\xi) < \dd(\eta\xi).
		\]
		Therefore, $\GG$ is strongly $\dd$-triangular, as claimed.
		
		We now turn to the case where $\GG$ is $\Delta$-dominated. By hypothesis, for each $\xi\in \Xi$ there exist finite subsets $\mathcal{C}_{l}(\xi), \mathcal{C}_{r}(\xi) \subseteq \{W\in \PP_{+}(\Xi,<) \mid W <_{\rr} \xi\}$ such that
		\[
		\Delta(z_{\xi}) \in 1\otimes z_{\xi} +z_{\xi} \otimes 1 + \Span\{z_{P}\otimes z_{Q} \mid P\in \mathcal{C}_{l}(\xi),\; Q \in  \mathcal{C}_{r}(\xi)\}.
		\]
		Select a positive integer $N$ satisfying
		\[
		N\geq \max\big\{ \operatorname{length}(PQ) \;\big|\;  (P,Q) \in \bigcup_{\xi\in \Xi} \mathcal{C}_{l}(\xi) \times \mathcal{C}_{r}(\xi) \big\}.
		\]
		Let $\dd\colon \Xi\to \Gamma$ be a positive map satisfying the given condition. For any $P=\xi'_{1}\cdots \xi'_{s} \in \mathcal{C}_{l}(\xi)$ and any $Q=\xi''_{1}\cdots \xi''_{t} \in \mathcal{C}_{r}(\xi)$,
		\[
		\dd(PQ) = \dd(\xi'_{1}) +\cdots + \dd(\xi'_{s}) + \dd(\xi''_{1}) +\cdots +\dd(\xi''_{t}) \leq N\cdot \max\{\dd(\xi'_{s}), \dd(\xi''_{t})\} \leq \dd(\xi).
		\]
		Hence $\GG$ is $(\Delta, \dd)$-dominated.
	\end{proof}
	
	\begin{remark}\label{right-lex-monoid}
		Let $\Xi=\{\xi_{1}, \ldots, \xi_{n}\}$ and equip $\Xi$ with the total order $\xi_{1} < \cdots < \xi_{n}$. Endow $\mathbb{N}^{n}$ with the order $<$ defined as follows: for $\gamma= (\gamma_{1},\ldots, \gamma_{n}), \delta =(\delta_{1},\ldots, \delta_{n})\in \mathbb{N}^{n}$,
		\[
		\gamma<\delta \Longleftrightarrow \gamma\neq \delta \text{ and the right-most nonzero component of $\gamma-\delta$ is negative}.
		\]
		Let $\dd_{\Xi}\colon\Xi\to \mathbb{N}^{n}$ denote the positive map given by $\xi_{i}\mapsto e_{i}$, where $e_{i}\in \mathbb{N}^{n}$ is the standard basis vector with $1$ in coordinate $i$ and zeros elsewhere. For $\GG=(\{z_{\xi}\}_{\xi\in \Xi}, <)$, one readily verifies that triangularity, $\dd_{\Xi}$-triangularity and strong $\dd_{\Xi}$-triangularity are equivalent properties. Similarly, the conditions of being $\Delta$-dominated and $(\Delta,\dd_{\Xi})$-dominated coincide.
	\end{remark}
	
	\subsection{Key structural observations}
	
	We start with the observation that triangular PBW generating systems yield Ore extensions.
	
	\begin{prop}\label{PBW-imply-IOE}
		Let $\GG=(\{z_{\xi}\}_{\xi\in \Xi}, <)$ be a PBW generating system of $H$. For each $\xi\in \Xi$, define
		\[
		H_{\xi}:= \Span\{z_{V}\mid V\in \PP(\Xi_{\xi}, {<}|_{\Xi_{\xi}} ) \} \quad \text{and} \quad H_{\xi}^{o}:= \Span\{z_{V}\mid V\in \PP(\Xi_{\xi}^{o}, {<}|_{\Xi_{\xi}^{o}}) \},
		\]
		where $\Xi_{\xi} = \{\eta\in \Xi \mid \eta\leq  \xi\}$ and $\Xi_{\xi}^{o} = \{\eta\in \Xi \mid \eta< \xi\}$. Suppose $\GG$ is $\dd$-triangular with respect to some positive map $\dd\colon\Xi\to \Gamma$. Then $H_{\xi}$ and $H_{\xi}^{o}$ are subalgebras of $H$, and $$H_{\xi} = H_{\xi}^{o}[z_{\xi}; \sigma_{\xi}, \delta_{\xi}]$$ for some algebra automorphism $\sigma_{\xi}$ and $\sigma_{\xi}$-derivation $\delta_{\xi}$ of $H_{\xi}^{o}$, for every $\xi\in \Xi$.
	\end{prop}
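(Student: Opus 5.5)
The strategy is to prove, by well-founded induction, that every product of generators $z_U$ with $U\in\PP(\Xi_\xi)$ (arbitrary order) lies in $H_\xi$. This gives both subalgebra claims at once, since $H^o_\xi$ is handled by the same argument applied to $\Xi^o_\xi$. The key point is that the correction terms in the $\dd$-triangular relation only involve letters that are at most $\xi$, because of how $<_{\rr,\lex}$ works. Indeed, if $W\in\PP_+(\Xi,<)$ and $W<_{\rr,\lex}\eta\xi$, compare last letters. Either $W$ is a proper suffix of $\eta\xi$, i.e.\ $W=\xi$, or $W<_{\rr}\eta\xi$. In the latter case the first difference from the right gives either last letter of $W$ $<\xi$, or $W=W'\xi$ with $W'<_{\rr,\lex}\eta$. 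Since $W$ is non-decreasing, all its letters are $\le$ its last letter. Hence every letter of $W$ is $\le\xi$, and every letter other than one trailing copy of $\xi$ is $<\xi$. When both $\eta,\xi\le\zeta$ for a fixed bound $\zeta$, all letters of $W$ therefore stay in $\Xi_\zeta$ (respectively $\Xi^o_\zeta$).

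\textbf{Rewriting procedure.} Given $U\in\PP(\Xi_\zeta)$ not non-decreasing, pick an adjacent inversion $\xi\eta$ with $\eta<\xi$ and substitute the relation. This yields $c\,z_{U'}$, where $U'$ swaps the two letters, plus terms $z_{U_1WU_2}$ with $\dd(W)\le\dd(\eta\xi)$ and $W<_{\rr,\lex}\eta\xi$.

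\textbf{Termination.} I would measure $U$ by the pair $(\dd(U),\Pi(U))$. The value $\dd$ never increases, so I need a well-founded order under which each new word is smaller. Swapping preserves $\Pi$, so I would also include the number of inversions of $U$, which strictly drops. For the correction terms with $\dd(W)=\dd(\eta\xi)$, I would invoke Lemma \ref{order-compare-rearrangement}(3)/(4): from $W<_{\rr,\lex}\eta\xi$ it gives $\Pi(U_1WU_2)<_{\rr,\lex}\Pi(U_1\eta\xi U_2)$. (The proper-suffix case $W=\xi$ has $\dd(W)<\dd(\eta\xi)$ by positivity, so it falls under strict decrease of $\dd$.) So the measure is the lexicographic triple ($\dd$, then $\Pi$ in $<_{\rr,\lex}$ restricted to a fixed $\dd$-value, then inversion count).

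\textbf{Main obstacle.} $<_{\rr,\lex}$ is not well-founded in general. I would first try to replace it on each fixed $\dd$-level by $<_{\rr,\dd}$. Same $\dd$-value plus $<_{\rr,\lex}$ between distinct non-decreasing words should reduce to $<_{\rr}$, because a proper suffix has strictly smaller $\dd$ by positivity. Lemma \ref{wellorder} then gives well-foundedness, using the hypothesis that $<$ is well-founded on each $\dd^{-1}(\gamma)$. If this works, induction shows $z_U\in H_\zeta$, so $H_\zeta$ and $H^o_\zeta$ are subalgebras.

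\textbf{Ore structure.} Apply the relation with $\eta<\xi$ and the letter analysis above. This gives $z_\xi z_\eta\in\Bbbk^\times z_\eta z_\xi+H^o_\xi+H^o_\xi z_\xi$, since $W$ has at most one trailing $\xi$. Hence $z_\xi H^o_\xi\subseteq H^o_\xi z_\xi+H^o_\xi$, using that $H^o_\xi$ is generated by the $z_\eta$, $\eta<\xi$, together with induction on products. The PBW property shows $H_\xi=\bigoplus_n H^o_\xi z_\xi^n$ freely, because the ordered words in $\Xi_\xi$ are exactly $Vz_\xi^n$ with $V\in\PP(\Xi^o_\xi,<)$. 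Then define $\sigma_\xi,\delta_\xi$ by $z_\xi r=\sigma_\xi(r)z_\xi+\delta_\xi(r)$. Freeness makes them well defined, and associativity makes $\sigma_\xi$ an endomorphism and $\delta_\xi$ a $\sigma_\xi$-derivation. For bijectivity of $\sigma_\xi$, use that it is triangular with nonzero scalars $z_\eta\mapsto c\,z_\eta+(\text{lower terms})$. Then it is filtration-preserving with respect to the well-founded order, so I would argue surjectivity by induction and injectivity from the diagonal. Equivalently, running the same rewriting in the opposite direction gives $H^o_\xi z_\xi\subseteq z_\xi H^o_\xi+H^o_\xi$, which yields surjectivity.
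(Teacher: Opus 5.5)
Your proposal is correct and follows essentially the same route as the paper: a straightening argument by well-founded induction, then PBW freeness of $H_{\xi}$ over $H^{o}_{\xi}$ with basis $\{z_{\xi}^{n}\}$, then bijectivity of $\sigma_{\xi}$ from its triangularity (with nonzero diagonal) with respect to the well-founded order $<_{\rr,\dd}$. The differences are minor. The paper's rearrangement lemma inducts on $<_{\mathfrak{l},\dd}$ over all words, whereas you induct on the pair ($\Pi(U)$ under $<_{\rr,\dd}$, inversion count), and you explicitly check that correction words only involve letters $\le\xi$, with at most one trailing $\xi$, a step the paper leaves implicit.
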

	
	The proof relies on the following technical rearrangement lemma.
	
	\begin{lemma}\label{rearrangement}
		Assume $\GG=(\{z_{\xi}\}_{\xi\in \Xi}, <)$ is $\dd$-triangular. Then for any $V\in \PP(\Xi)$,
		\[
		z_V \in \Bbbk^{\times} \cdot z_{\Pi(V)} +  \Span\big\{z_{W} \,\big|\, W \in \PP_{+}(\Xi, <),\; \dd(W) \leq \dd(V),\; W<_{\rr,\lex} \Pi(V) \big\}.
		\]
	\end{lemma}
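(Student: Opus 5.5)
Writing $S(V)$ for the span $\Span\{z_W \mid W\in \PP_{+}(\Xi,<),\ \dd(W)\le \dd(V),\ W<_{\rr,\lex}\Pi(V)\}$, I will prove $z_V\in \Bbbk^{\times} z_{\Pi(V)}+S(V)$ by well-founded induction on $V$ with respect to the order $<_{\rr,\dd}$ on $\PP(\Xi)$. This order is a well-founded total order by Lemma \ref{wellorder}: its hypotheses hold because $\dd$ is positive and $\dd$-triangularity requires $<$ to be well-founded on each fibre $\dd^{-1}(\gamma)$. Within a fixed value of $\dd$ we also need a finer induction on the number of inversions of $V$, i.e.\ the number of pairs $i<j$ with $\xi_i>\xi_j$. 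So the induction runs on pairs (position of $V$ under $<_{\rr,\dd}$, number of inversions), ordered lexicographically.

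\textbf{Base case.} If $V$ has no inversions, then $V=\Pi(V)$ and the claim is trivial.

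\textbf{Inductive step.} Otherwise choose an adjacent descent, writing $V=V_1\xi\eta V_2$ with $\eta<\xi$. By $\dd$-triangularity,
\[
z_V=c\,z_{V_1\eta\xi V_2}+\sum_W a_W\, z_{V_1}z_W z_{V_2},
\]
where $c\in\Bbbk^{\times}$ and each $W\in\PP_{+}(\Xi,<)$ satisfies $\dd(W)\le \dd(\eta\xi)$ and $W<_{\rr,\lex}\eta\xi$.

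The first term is handled as follows. The word $V'=V_1\eta\xi V_2$ has the same $\dd$-value and the same rearrangement as $V$, and exactly one fewer inversion. So the inner induction gives $z_{V'}\in\Bbbk^{\times}z_{\Pi(V)}+S(V)$.

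For a correction term, set $U=V_1WV_2$. Then $\dd(U)\le\dd(V)$, and I need $U$ to be strictly smaller than $V$ in the induction order.
\begin{itemize}
\item If $\dd(W)<\dd(\eta\xi)$, then $U<_{\rr,\dd}V$ immediately.
\item If $\dd(W)=\dd(\eta\xi)$, I must check $U<_{\rr}V$ as words. This is not automatic, because $W<_{\rr,\lex}\eta\xi$ may hold with $W$ a proper suffix of $\eta\xi$, namely $W=\xi$. That case, however, forces $\dd(W)=\dd(\xi)<\dd(\eta\xi)$, so it falls under the previous bullet. Otherwise $W<_{\rr}\eta\xi$, and compatibility of $<_{\rr}$ with concatenation gives $U<_{\rr}V$.
\end{itemize}
The induction hypothesis then yields $z_U\in\Bbbk^{\times}z_{\Pi(U)}+S(U)$.

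It remains to show $z_U\in S(V)$, i.e.\ $\Pi(U)<_{\rr,\lex}\Pi(V)$, given $\Pi(W)=W<_{\rr,\lex}\eta\xi=\Pi(\eta\xi)$.
\begin{itemize}
\item Apply Lemma \ref{order-compare-rearrangement}(4) twice, via $\Pi(V_1W)$ and then appending $V_2$, using that $\le_{\rr,\lex}$ holds trivially for equal pieces. Since the rearrangement of a concatenation is commutative in the pieces, the order of the factors is immaterial. This gives $\Pi(U)<_{\rr,\lex}\Pi(V)$.
\item The words in $S(U)$ are $<_{\rr,\lex}\Pi(U)$, hence $<_{\rr,\lex}\Pi(V)$ by transitivity. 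Their $\dd$-values are $\le\dd(U)\le\dd(V)$.
\end{itemize}
Hence every correction term lies in $S(V)$, completing the step.

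\textbf{Main obstacle.} The delicate point is making the induction well-founded. The $<_{\rr,\lex}$ order is not well-founded, so the induction must run on $<_{\rr,\dd}$ together with the inversion count. It must also correctly handle the degenerate case where $W$ is a suffix of $\eta\xi$, which is where the $\dd$-bound in the definition of $\dd$-triangularity is essential.
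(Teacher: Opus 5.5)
There is a genuine gap: the induction measure you chose does not decrease. Since $<_{\rr,\dd}$ is already a total order on $\PP(\Xi)$, the lexicographic pair (position of $V$ under $<_{\rr,\dd}$, number of inversions) amounts to $<_{\rr,\dd}$ on $V$ alone, and the inversion count never comes into play. But swapping a descent moves a word \emph{up} in this order. Reading from the right, $V=V_1\xi\eta V_2$ and $V'=V_1\eta\xi V_2$ first differ at $\eta$ versus $\xi$, so $V<_{\rr}V'$ and hence $V<_{\rr,\dd}V'$.

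The correction terms fail in the same way. Your step ``$W<_{\rr}\eta\xi$, so compatibility with concatenation gives $U<_{\rr}V$'' actually yields $U<_{\rr}V_1\eta\xi V_2=V'$, not $U<_{\rr}V$. The latter is false in general: take $1<2<3$, $\dd\equiv 1$ and $z_3z_1=z_1z_3+z_2^2$. Then $V=31$ has correction word $U=22$, and $31<_{\rr}22$. So in the equal-$\dd$ case neither the swapped term nor the correction terms are covered by your induction hypothesis.

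The repair uses what you already proved. Take $\Pi(V)$, ordered by $<_{\rr,\dd}$, as the primary key; this is well-founded on $\PP(\Xi,<)$ by Lemma \ref{wellorder}. Keep the inversion count as the secondary key. Then $V'$ has the same rearrangement and one fewer inversion. For $U=V_1WV_2$, your final bullet gives $\Pi(U)<_{\rr,\lex}\Pi(V)$ and $\dd(U)\le\dd(V)$. When $\dd(U)=\dd(V)$, positivity of $\dd$ rules out $\Pi(U)$ being a proper suffix of $\Pi(V)$, so in all cases $\Pi(U)<_{\rr,\dd}\Pi(V)$.

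Alternatively, follow the paper and induct on $V$ itself with respect to $<_{\mathfrak{l},\dd}$. There the swap $\xi\eta\to\eta\xi$ decreases $<_{\mathfrak{l}}$ directly, so no inversion count is needed. When $\dd(W)=\dd(\eta\xi)$, the suffix case $W=\xi$ is excluded, as you observed. Then $W\in\PP_{+}(\Xi,<)$ and $W<_{\rr}\eta\xi$ force the first letter of $W$ to be $<\xi$, whence $V_1WV_2<_{\mathfrak{l},\dd}V$.

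The rest of your argument is sound: the $\dd$-decreasing case, the treatment of $W=\xi$, and the use of Lemma \ref{order-compare-rearrangement}(4) to get $\Pi(U)<_{\rr,\lex}\Pi(V)$.
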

	
	\begin{proof}
		We proceed by induction on $V$ with respect to $<_{\mathfrak{l}, \dd}$, which is a well-order on $\PP(\Xi)$ by Lemma \ref{wellorder}. The claim trivially holds for $V=\emptyset$, the minimal element with respect to $<_{\mathfrak{l}, \dd}$. Now suppose $V\neq \emptyset$. If $V\in \PP(\Xi, <)$ then $\Pi(V)=V$ and the statement holds immediately. We may therefore assume $V$ admits a decomposition $V=V_{1}\xi \eta V_{2}$ with $V_{1}, V_{2} \in \PP(\Xi)$ and $\eta,\xi \in \Xi$ satisfying $\eta< \xi$. By the $\dd$-triangularity hypothesis,
		\begin{eqnarray*}
			z_{V}= a \cdot z_{V_{1}} z_{\eta} z_{\xi} z_{V_{2}}  + \sum_{j=1}^{m} b_{j}\cdot z_{V_{1}} z_{M_{j}} z_{V_{2}} = a\cdot z_{V_{1} \eta\xi V_{2}} +\sum_{j=1}^{m} b_{j} \cdot z_{V_{1}M_{j}V_{2}},
		\end{eqnarray*}
		where $m\geq 0$, $a\in \Bbbk^{\times}$, $b_{j}\in \Bbbk$, and each $M_{j} \in \PP(\Xi,<)$ satisfies $\dd(M_{j}) \leq \dd(\eta\xi)$ and $M_{j}<_{\rr,\lex} \eta \xi$. Clearly, $\dd(V_{1}M_{j} V_{2}) \leq \dd(V_{1} \eta \xi V_{2}) =\dd(V)$. By Lemma \ref{order-compare-rearrangement},
		\[\Pi(V_{1}M_{j} V_{2}) <_{\rr,\lex} \Pi(V_{1} \eta \xi V_{2}) = \Pi(V).\]
		Since $V_{1}\eta\xi V_{2}, V_{1} M_{j}V_{2} <_{\mathfrak{l}, \dd} V$, the induction hypothesis yields the desired conclusion.
	\end{proof}
	
	\begin{proof}[Proof of Proposition \ref{PBW-imply-IOE}]
		By Lemma \ref{rearrangement}, $H_{\xi}$ and $H_{\xi}^{o}$ are subalgebras of $H$.
		
		The families $\{z_{V}\}_{V\in \PP(\Xi_{\xi}, {<}|_{\Xi_{\xi}})}$ and $\{z_{V}\}_{V\in \PP(\Xi_{\xi}^{o}, {<}|_{\Xi_{\xi}^{o}})}$ are linearly independent, hence form bases of $H_{\xi}$ and $H_{\xi}^{o}$ respectively. It follows that $\{1, z_{\xi}, z_{\xi}^{2}, \ldots\}$ is a basis of $H_{\xi}$ as a left $H_{\xi}^{o}$-module. Applying Lemma \ref{rearrangement} again, for any $V\in \PP(\Xi_{\xi}^{o}, {<}|_{\Xi_{\xi}^{o}})$ we obtain
		\[z_{\xi}z_{V} =(q_{V}z_{V}+g_{V})z_{\xi} + h_{V},\]
		where $q_{V}\in \Bbbk^{\times}$, $h_{V} \in H_{\xi}^{o}$ and
		\[g_{V} \in \Span\big\{z_{W}\,\big|\, W\in \PP(\Xi_{\xi}^{o}, {<}|_{\Xi_{\xi}^{o}}),\; \dd(W)\leq \dd(V),\; W<_{\rr,\lex} V\big\}.\]
		Define linear maps $\sigma_{\xi}, \delta_{\xi}\colon H_{\xi}^{o} \to H_{\xi}^{o}$ by
		\[
		\sigma_{\xi}(z_{V}) = q_{V}z_{V}+g_{V} \quad \text{and} \quad  \delta_{\xi}(z_{V}) =h_{V}
		\]
		for each $V\in \PP(\Xi_{\xi}^{o}, {<}|_{\Xi_{\xi}^{o}})$. One immediately sees that $\sigma_{\xi}\colon H_{\xi}^{o}\to H_{\xi}^{o}$ is lower-triangular with nonzero diagonal entries with respect to the well-ordered basis $\{z_{V}\}_{V\in \PP(\Xi_{\xi}^{o}, {<}|_{\Xi_{\xi}^{o}})}$, where the index set $\PP(\Xi_{\xi}^{o}, {<}|_{\Xi_{\xi}^{o}})$ carries the restriction of $<_{\rr,\dd}$ from $\PP(\Xi, <)$. Consequently, $\sigma_{\xi}$ is a linear isomorphism. Moreover, $z_{\xi}f = \sigma_{\xi}(f) z_{\xi} + \delta_{\xi}(f)$ for all $f\in H_{\xi}^{o}$, and
		\[
		\sigma_{\xi}(fg) z_{\xi} +\delta_{\xi}(fg) = z_{\xi}(fg) = \big(\sigma_{\xi}(f)\sigma_{\xi}(g)\big) z_{\xi} + \delta_{\xi}(f) g + \sigma_{\xi}(f)\delta_{\xi}(g)
		\]
		for all $f,g\in H_{\xi}^{o}$.
		We deduce that $\sigma_{\xi}\colon H_{\xi}^{o}\to H_{\xi}^{o}$ is an algebra automorphism and $\delta_{\xi}\colon H_{\xi}^{o}\to H_{\xi}^{o}$ is a $\sigma_{\xi}$-derivation. Therefore, $H_{\xi} = H_{\xi}^{o}[z_{\xi};\sigma_{\xi},\delta_{\xi}]$, as required.
	\end{proof}
	
	Another crucial observation, formulated as the proposition below, characterizes the expression of $\Delta(z_{V})$ for every word $V\in \PP(\Xi, <)$ under  our standing hypotheses on $\GG=(\{z_\xi\}_{\xi\in\Xi},<)$. A weaker statement for the graded setting was established in \cite[Lemma 3.3]{LZ25}.
	
	We now introduce additional notation. For words $V, W\in \PP(\Xi)$, write $W| V$ to mean that $W$ is a subword of $V$: if $V=\nu_1\cdots \nu_n$, then $W=\nu_{i_1}\cdots \nu_{i_s}$ for some indices $1\leq i_1<\cdots <i_s\leq n$. If $V= \xi_{1}^{p_{1}} \cdots \xi_{r}^{p_{r}}$ with pairwise distinct $\xi_1,\ldots, \xi_r \in \Xi$, every subword takes the form $W= \xi_{1}^{q_{1}} \cdots \xi_{r}^{q_{r}}$ with $q_i\leq p_i$. In this situation we write
	\[
	V/W:= \xi_{1}^{p_{1}-q_{1}} \cdots \xi_{r}^{p_{r} -q_{r}} \quad \text{and} \quad \binom{V}{W}:= \binom{p_1}{q_1}\cdots \binom{p_r}{q_r}.
	\]
	In particular, this notation applies whenever $V,W\in \PP(\Xi, <)$.
	
	\begin{prop}\label{comultiplication}
		Assume $\GG=(\{z_\xi\}_{\xi \in\Xi},<)$ is $\dd$-triangular and $(\Delta,\dd)$-dominated. Then
		\begin{eqnarray*}
			\Delta(z_{V}) & \in &  \sum_{W\mid V}\binom{V}{W}z_{W} \otimes z_{V/W} + \Span\{z_{P}\otimes z_{Q} \mid (P,Q) \in \mathcal{X}(V)\}
		\end{eqnarray*}
		for every word $V\in \PP(\Xi, <)$, where $\mathcal{X}(V) = \mathcal{X}_{0}(V)\cup \mathcal{X}_{1}(V) \subseteq  \PP_{+}(\Xi,<)^{2}$ is defined by
		\begin{eqnarray*}
			\mathcal{X}_{0}(V) &=& \big\{ (P,Q) \in \PP_{+}(\Xi,<)^{2} \,\big|\,  \dd(PQ) = \dd(V),\; P, Q, \Pi(PQ)<_{\rr} V \big\}\\
			\mathcal{X}_{1}(V) &=& \big\{ (P,Q) \in \PP_{+}(\Xi,<)^{2} \,\big|\, \dd(PQ) < \dd(V),\; P, Q, \Pi(PQ) <_{\rr,\lex}V \big\}.
		\end{eqnarray*}
	\end{prop}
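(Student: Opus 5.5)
The plan is induction on the length of $V\in\PP(\Xi,<)$ (equivalently on $V$ with respect to $<_{\rr,\dd}$). For $V=\emptyset$ the claim is $\Delta(1)=1\otimes 1$. For a single letter $V=\xi$ the main sum is $1\otimes z_\xi+z_\xi\otimes 1$. The remaining terms $z_P\otimes z_Q$ supplied by $(\Delta,\dd)$-domination have $P,Q<_{\rr}\xi$ and $\dd(PQ)\le\dd(\xi)$. Since $P,Q<_{\rr}\xi$ means every letter of $P$ and of $Q$ is $<\xi$, we also get $\Pi(PQ)<_{\rr}\xi$, and $<_{\rr}$ implies $<_{\rr,\lex}$. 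So each pair lies in $\mathcal{X}_0(\xi)$ or $\mathcal{X}_1(\xi)$, according as $\dd(PQ)=\dd(\xi)$ or $\dd(PQ)<\dd(\xi)$.

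For the inductive step, write $V=V'\xi$ with $\xi$ the largest (last) letter of $V$, so that $z_V=z_{V'}z_\xi$ and $\Delta(z_V)=\Delta(z_{V'})\Delta(z_\xi)$. Multiplying out gives three kinds of products $z_{P_1}z_{P_2}\otimes z_{Q_1}z_{Q_2}$, which I treat separately.

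\textbf{Main term times main term.} Each factor $z_{W'}z_{\eta}$ with $\eta\in\{\xi,\emptyset\}$ equals $z_{W'\eta}$, and $W'\eta$ is already non-decreasing because $\xi$ is maximal. Likewise $z_{V'/W'}z_{\eta'}$ is a non-decreasing monomial. Collecting terms, the coefficient of $z_W\otimes z_{V/W}$ is $\binom{V'}{W'}$ summed over the two ways of placing the extra $\xi$, which by Pascal's rule gives $\binom{V}{W}$. Only the binomial exponent of $\xi$ changes, so the count is exactly right.

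\textbf{Cross terms.} These are a main term of one factor times an $\mathcal{X}$-term of the other. I will reorder each product $z_{A}z_{B}$ using Lemma \ref{rearrangement}. This gives $\Bbbk^\times z_{\Pi(AB)}$ plus terms $z_W$ with $\dd(W)\le\dd(AB)$ and $W<_{\rr,\lex}\Pi(AB)$. Then I must check that the resulting pairs $(P,Q)$ lie in $\mathcal{X}(V)$. The tools are the monotonicity statements of Lemma \ref{order-compare-rearrangement}(3),(4), applied in each tensor slot and to the concatenation $PQ$. For example, if $(P_1,Q_1)\in\mathcal{X}_0(V')$, then $P_1<_{\rr}V'$ combined with $P_2\le_{\rr,\lex}\xi$ gives $\Pi(P_1P_2)<_{\rr}V$. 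Similarly $\Pi(P_1Q_1)<_{\rr}V'$ gives $\Pi(P_1P_2Q_1Q_2)<_{\rr}V$, with the $\dd$-values adding.

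Two points need care. First, lower-order terms from reordering may drop $\dd$ strictly, which moves them into $\mathcal{X}_1$. Membership in $\mathcal{X}_1$ only needs $<_{\rr,\lex}$, which is preserved since $W<_{\rr,\lex}\Pi(AB)<_{\rr,\lex}V$ by transitivity. Second, a reordering term with equal $\dd$ is only $<_{\rr,\lex}$-smaller, not $<_{\rr}$-smaller. This must be reconciled with the $\mathcal{X}_0$ requirement $P,Q<_{\rr}V$.

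I expect this equal-degree bookkeeping to be the main obstacle: showing that every surviving term with $\dd(PQ)=\dd(V)$ genuinely satisfies the stronger $<_{\rr}$ conditions, and that no term produces a spurious main-sum monomial. My first attempt is to observe that within a fixed total $\dd$ the relevant words are non-decreasing, and for non-decreasing words of the same degree $<_{\rr,\lex}$ essentially reduces to $<_{\rr}$ except in proper-suffix situations. Proper suffixes of equal degree are impossible since $\dd$ is positive. If that fails, I would strengthen the induction hypothesis to a sharper but self-reproducing description of the equal-degree terms and verify that the sharper form is closed under the products above.
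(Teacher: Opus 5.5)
Your proposal is correct and follows the paper's proof. The shared steps are: induction on length, multiplying out, rearranging with Lemma \ref{rearrangement}, and checking membership in $\mathcal{X}(V)$ via Lemma \ref{order-compare-rearrangement}(3),(4). The equal-degree case is settled exactly by your first idea. When $\dd(PQ)=\dd(V)$, each rearranged factor has the same degree as the product it came from, so positivity of $\dd$ excludes proper suffixes and $\le_{\rr,\lex}$ becomes $\le_{\rr}$; this then chains with the strict $<_{\rr}$ supplied by Lemma \ref{order-compare-rearrangement}(3). The same reasoning covers the products of two $\mathcal{X}$-terms, which you did not write out and which are the paper's representative case.

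The only difference is cosmetic. The paper peels off the smallest letter, $V=\xi U$, so its products of $\mathcal{X}(\xi)$-terms with main terms need no rearranging. You peel off the largest letter instead, which makes your products of $\mathcal{X}(V')$-terms with main terms rearrangement-free.
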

	
	\begin{proof}
		We may assume $V\neq \emptyset$, as the statement is trivial for $V=\emptyset$.
		
		By Lemma \ref{rearrangement}, for each nonempty word $D\in \PP_{+}(\Xi)$ we may fix a finite subset
		$$\mathcal{Y}(D) \subseteq \{E\in \PP_{+}(\Xi, <) \mid  \dd(E) \leq \dd(D),\; E\leq _{\rr,\lex} \Pi(D)\}$$
		such that $z_{D} \in \Span \{ z_{E} \mid E\in \mathcal{Y}(D)\}.$ For $D\in \PP_{+}(\Xi,<)$ we simply take $\mathcal{Y}(D) =\{D\}$.
		
		We induct on the length $l(V)$ of $V$. If $l(V)=1$, the assertion follows directly from the hypotheses. Now suppose $l(V)>1$ and write $V=\xi U$ with $\xi \in \Xi$. By the induction hypothesis,
		\begin{eqnarray*}
			\Delta(z_{V}) &=&
			\left(1\otimes z_{\xi}+z_{\xi}\otimes1+\sum_{(M,N)\in \mathcal{X}(\xi)} a_{M,N} z_{M}\otimes z_{N} \right) \\
			&& \cdot\left(\sum_{E\mid U}\binom{U}{E}z_{E}\otimes z_{U/E}+ \sum_{(S,T)\in \mathcal{X}(U)} b_{S,T} z_{S}\otimes z_{T}\right),
		\end{eqnarray*}
		with $a_{M,N},b_{S,T} \in \Bbbk$, and $\mathcal{X}(\xi), \mathcal{X}(U) \subseteq \PP_{+}(\Xi,<)^{2}$ defined in the obvious way. Since $\xi$ is the minimal letter appearing in $V$,
		\[
		\sum_{W\mid V}\binom{V}{W}z_{W} \otimes z_{V/W} = \left(1\otimes z_{\xi}+z_{\xi}\otimes1\right) \left(\sum_{E\mid U}\binom{U}{E}z_{E}\otimes z_{U/E}\right) .
		\]
		Subtracting these two expressions shows that $\Delta(z_{V}) - \sum_{W\mid V}\binom{V}{W}z_{W} \otimes z_{V/W}$ lies in the span of tensors $z_{P} \otimes z_{Q}$ where $(P,Q)$ satisfies one of the following four cases:
		\begin{enumerate}
			\item $P=S$ and $Q\in \mathcal{Y}(\xi T)$ for some $(S,T) \in \mathcal{X}(U)$;
			\item  $P\in \mathcal{Y}(\xi S)$ and $Q=T$ for some $(S,T) \in \mathcal{X}(U)$;
			\item  $P = ME$ and $Q=N(U/E)$ for some $(M,N)\in \mathcal{X}(\xi)$ and $E\mid U$;
			\item $P\in \mathcal{Y}(MS)$ and $Q\in \mathcal{Y}(NT)$ for some $(M,N)\in \mathcal{X}(\xi)$ and $(S,T) \in \mathcal{X}(U)$.
		\end{enumerate}
		In Case (3), note that $ME, N(U/E)\in \PP_{+}(\Xi, <)$ because $M, N<_{\rr} \xi \leq _{\mathfrak{l},\lex} U$.
		
		It remains to verify that $(P,Q) \in \mathcal{X}(V)$ in every case. We treat Case (4) as representative. Clearly $\dd(PQ) \leq \dd (V)$. By Lemma \ref{order-compare-rearrangement},
		\[
		P ,Q <_{\rr,\lex} \Pi(PQ) \leq _{\rr,\lex}   \Pi(MSNT) = \Pi(MNST)  <_{\rr,\lex} \Pi(\xi U) =V.
		\]
		Suppose additionally that $\dd(PQ) =\dd(V)$. Then $\dd(MN) =\dd(\xi)$, $\dd(ST)=\dd(U)$, $\dd(P)=\dd(MS)$ and $\dd(Q)=\dd(NT)$. Since $M,N<_{\rr} \xi$ and  $S,T <_{\rr} U$, Lemma \ref{order-compare-rearrangement} yields
		\[
		P \leq _{\rr} \Pi(MS) <_{\rr} \Pi(\xi U) =V  \quad \text{and} \quad Q \leq _{\rr} \Pi(NT) <_{\rr} \Pi(\xi U) =V.
		\]
		Moreover, $\Pi(MN) <_{\rr} \xi$ and $\Pi(PQ) <_{\rr} U$, so
		\[
		\Pi(PQ) \leq _{\rr} \Pi(MSNT) = \Pi(MNST) <_{\rr} \Pi(\xi U) =V.
		\]
		Thus $(P,Q) \in \mathcal{X}(V)$. Similar, simpler arguments apply to the remaining three cases.
	\end{proof}
	
	\begin{corollary}\label{core facts}
		Let $\GG=(\{z_\xi\}_{\xi \in\Xi},<)$ be a system of PBW generators of $H$. Assume $\GG$ is $\dd$-triangular and $(\Delta,\dd)$-dominated. Let $U, V\in \PP(\Xi,<)$ satisfy $U|V$. Then
		\begin{eqnarray*}
			(\phi_{U}\otimes \id_{H})(\Delta(f))   &\in &  \binom{V}{U} z_{V/U} + \Span\{z_{N}\mid N\in \PP(\Xi,<),\; N<_{\rr,\dd} V/U \}\\
			(\id_{H}\otimes \phi_{U}) (\Delta(f))  &\in &  \binom{V}{U} z_{V/U} + \Span\{z_{N}\mid N\in \PP(\Xi,<),\; N<_{\rr,\dd} V/U \}
		\end{eqnarray*}
		for any $f\in  z_{V} +\Span\{z_{W} \mid W\in \PP(\Xi, <),\; W <_{\rr,\dd} V\};$ and
		\begin{eqnarray*}
			(\phi_{U}\otimes \id_{H})(\Delta(f))  &\in& \binom{V}{U} z_{V/U} + \Span\{z_{N}\mid N\in \PP(\Xi,<),\; \dd(N) \leq \dd(V/U),\; N <_{\rr,\lex} V/U  \}\\
			(\id_{H}\otimes \phi_{U}) (\Delta(f)) &\in&  \binom{V}{U} z_{V/U} + \Span\{z_{N}\mid N\in \PP(\Xi,<),\; \dd(N) \leq \dd(V/U),\; N <_{\rr,\lex} V/U  \}
		\end{eqnarray*}
		for any $f\in  z_{V} +\Span\{z_{W} \mid W\in \PP(\Xi, <),\; \dd(W) \leq \dd(V),\; W <_{\rr,\lex} V\}.$
	\end{corollary}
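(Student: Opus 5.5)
The plan is to expand $f$ in the PBW basis, apply Proposition \ref{comultiplication} to each basis element appearing, and then read off coefficients with $\phi_U\otimes\id_H$ (resp.\ $\id_H\otimes\phi_U$). Only the first-factor version needs detailed treatment; the second is symmetric. That proposition is symmetric in the two tensor factors: $\mathcal{X}(V)$ imposes the same conditions on $P$ and $Q$, and $\binom{V}{W}=\binom{V}{V/W}$.

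\emph{Leading term.} Write $f=z_V+\sum_W c_W z_W$. By Proposition \ref{comultiplication},
\[
(\phi_U\otimes\id)(\Delta(z_V))=\binom{V}{U}z_{V/U}+\sum_{(U,Q)\in\mathcal{X}(V)} b_{U,Q}\, z_Q .
\]
In the binomial sum only $W=U$ survives. I then need each such $Q$ to satisfy $Q<_{\rr,\dd}V/U$, respectively $\dd(Q)\le\dd(V/U)$ and $Q<_{\rr,\lex}V/U$.

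\emph{Case $(U,Q)\in\mathcal{X}_1(V)$.} Here $\dd(Q)=\dd(UQ)-\dd(U)<\dd(V)-\dd(U)=\dd(V/U)$, using cancellation in the free abelian monoid $\Gamma$ together with monotonicity. This already gives $Q<_{\rr,\dd}V/U$. For the lexicographic version I would use $\Pi(UQ)<_{\rr,\lex}V=\Pi(U\cdot V/U)$ and cancel the common subword $U$. This needs a small cancellation statement, namely $\Pi(UA)<_{\rr,\lex}\Pi(UB)\Rightarrow A<_{\rr,\lex}B$ for sorted words. I would prove it as the contrapositive of Lemma \ref{order-compare-rearrangement}(4), using that $<_{\rr,\lex}$ is total.

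\emph{Case $(U,Q)\in\mathcal{X}_0(V)$.} Here $\dd(Q)=\dd(V/U)$ and $\Pi(UQ)<_{\rr}V$. Cancellation via Lemma \ref{order-compare-rearrangement}(3), again by contraposition together with totality, gives $Q<_{\rr,\lex}V/U$. The refinement to $Q<_{\rr}V/U$ needs extra care. Two equal-$\dd$ sorted words in which neither is a proper suffix of the other are comparable under $<_{\rr}$. If instead $Q$ is a proper suffix of $V/U$, then $\dd(Q)<\dd(V/U)$ by positivity, which contradicts $\dd(Q)=\dd(V/U)$. So in either subcase $Q<_{\rr,\dd}V/U$.

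\emph{Lower terms.} For $W<_{\rr,\dd}V$ (resp.\ $\dd(W)\le\dd(V)$ and $W<_{\rr,\lex}V$), the contribution $(\phi_U\otimes\id)\Delta(z_W)$ is a combination of $z_Q$ with either $Q=W/U$ (when $U\mid W$) or $(U,Q)\in\mathcal{X}(W)$. In both cases the previous step gives $Q\le_{\rr,\dd}W/U$, or the analogous lex bound; when $U\nmid W$ the word $W/U$ is not defined, and $Q$ must instead be compared to $V/U$ directly through $\Pi(UQ)$. So the needed statement is:
\[
\Pi(UQ)\le_{\rr,\dd}W<_{\rr,\dd}V \;\Longrightarrow\; Q<_{\rr,\dd}V/U .
\]
This is again cancellation of $U$ in a two-sided compatible total order. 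Compatibility of $<_{\rr,\dd}$ with concatenation holds on sorted words after applying $\Pi$, by Lemma \ref{order-compare-rearrangement}(3) and additivity of $\dd$. The lex case is handled the same way, with the extra degree bound $\dd(Q)\le\dd(W)-\dd(U)\le\dd(V/U)$.

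I expect the main obstacle to be these cancellation steps, especially the $\mathcal{X}_0$ bookkeeping where the strict $<_{\rr}$ must survive removal of $U$. I would first isolate a small lemma that sorted-word multiplication by a fixed word $U$ is strictly monotone for each of $<_{\rr}$, $<_{\rr,\lex}$ and $<_{\rr,\dd}$, and then apply it uniformly.
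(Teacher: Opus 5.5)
Your proposal is correct and follows essentially the paper's route. You expand $f$ in the PBW basis, apply Proposition \ref{comultiplication} termwise, keep only the terms whose first tensor factor is $U$, and cancel $U$ via Lemma \ref{order-compare-rearrangement} by contraposition plus totality, with degree bookkeeping in $\Gamma$. The one organizational difference is that the paper derives the $<_{\rr,\dd}$ statement from the lexicographic-plus-degree bounds, relying implicitly on the fact you make explicit in the $\mathcal{X}_0$ case: equal degree together with $<_{\rr,\lex}$ forces $<_{\rr}$. You instead also prove the $<_{\rr,\dd}$ statement directly, using strict monotonicity of $<_{\rr,\dd}$ under $\Pi(U\,\cdot\,)$.
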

	
	\begin{proof}
		We treat $(\phi_{U}\otimes \id_{H})(\Delta(f))$; the case of $(\id_{H}\otimes \phi_{U}) (\Delta(f))$ is analogous.
		
		First apply Proposition \ref{comultiplication} to $z_{W}$ for $W\in \PP(\Xi, <)$ with $\dd(W) <\dd(V)$. One obtains
		\begin{eqnarray*}
			(\phi_{U}\otimes \id_{H}) (\Delta(z_{W})) &\in & \Span\{z_{Q}\mid Q\in \PP(\Xi,<),\; \dd(Q) < \dd(V/U) \}.
		\end{eqnarray*}
		
		Next apply Proposition \ref{comultiplication} to $z_{W}$ for $W \in \PP(\Xi, <)$ with $\dd(W) \leq \dd(V)$ and $W <_{\rr,\lex} V$. Consider any $(P,Q) \in \supp_{\GG} (\Delta(z_{W}))$. We have $\dd(PQ) \leq \dd(W)$ and $\Pi(PQ) \leq _{\rr,\lex} W$. By Lemma \ref{order-compare-rearrangement}, whenever $P=U$ we get $\dd(Q) \leq \dd(V/U)$ and $Q <_{\rr,\lex} V/U$. Therefore
		\begin{eqnarray*}
			(\phi_{U}\otimes \id_{H}) (\Delta(z_{W})) &\in & \Span\{z_{Q}\mid Q\in \PP(\Xi,<),\; \dd(Q) \leq  \dd(V/U),\; Q <_{\rr,\lex} V/U \}.
		\end{eqnarray*}
		
		Finally apply Proposition \ref{comultiplication} to $z_{V}$. Take any $(P,Q) \in \supp_{\GG} (\Delta(z_{V}))$ with $(P,Q) \neq (E, V/E)$ for any subword $E\mid V$. Then $\dd(PQ) \leq \dd(V)$ and $\Pi(PQ) <_{\rr,\lex} V$. By Lemma \ref{order-compare-rearrangement}, if $P=U$ then $\dd(Q) \leq \dd(V/U)$ and $Q <_{\rr,\lex} V/U$. Hence
		\begin{eqnarray*}
			(\phi_{U}\otimes \id_{H})(\Delta(z_{V})) &\in& \binom{V}{U} z_{V/U}  + \Span\{z_{Q}\mid Q\in \PP(\Xi,<),\; \dd(Q)\leq \dd(V/U),\; Q <_{\rr,\lex} V/U  \}.
		\end{eqnarray*}
		
		Combining these three estimates yields the claim.
	\end{proof}
	
	\subsection{Additional complementary results}
	
	\begin{prop}\label{change-of-order}
		Assume $\GG=(\{z_{\xi}\}_{\xi\in \Xi}, <)$ is $\dd$-triangular. For any total order $\triangleleft$ on $\Xi$: if $\{z_V\}_{V\in \PP(\Xi,<)}$ spans $H$ (resp.\ is linearly independent), then so does $\{z_V\}_{V\in \PP(\Xi,\triangleleft)}$. Consequently, if $\GG$ is a PBW generating system of $H$, then $(\{z_{\xi}\}_{\xi\in \Xi}, \triangleleft)$ is too.
	\end{prop}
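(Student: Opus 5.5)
The plan is to show that for every $V\in\PP(\Xi)$, the product $z_V$ lies in $\Bbbk^{\times}\cdot z_{\Pi_{\triangleleft}(V)}$ plus a span of ordered monomials $z_W$, $W\in\PP(\Xi,\triangleleft)$, that are smaller than $\Pi_{\triangleleft}(V)$ in a suitable well-founded order. We then compare the two families $\{z_V\}_{V\in\PP(\Xi,<)}$ and $\{z_V\}_{V\in\PP(\Xi,\triangleleft)}$ through the bijection $\Pi_{\triangleleft}\colon\PP(\Xi,<)\to\PP(\Xi,\triangleleft)$ and its inverse $\Pi_{<}$. Both sets of sorted words are in bijection with finite multisets on $\Xi$. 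The natural ordering is by $\dd$-degree first, and within equal degree by a refinement that does not depend on the chosen linear order of letters. Concretely, I will use the partial order on multisets $M\prec M'$ iff $\dd(M)<\dd(M')$, or $\dd(M)=\dd(M')$ and $\Pi_<(M)<_{\rr,\lex}\Pi_<(M')$. This is the order controlling Lemma~\ref{rearrangement}, and it is well-founded on each fixed degree piece by Lemma~\ref{wellorder}.

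\textbf{Step 1 (triangular transition).} For $V\in\PP(\Xi,\triangleleft)$, apply Lemma~\ref{rearrangement} to the word $V$ regarded as an arbitrary word for the order $<$. This gives
\[
z_V\in\Bbbk^{\times}z_{\Pi_<(V)}+\Span\{z_W\mid W\in\PP_+(\Xi,<),\ \dd(W)\le\dd(V),\ W<_{\rr,\lex}\Pi_<(V)\}.
\]
So the transition matrix from $\{z_V\}_{V\in\PP(\Xi,\triangleleft)}$ to $\{z_W\}_{W\in\PP(\Xi,<)}$, after indexing both by multisets, is lower triangular with respect to $\prec$ and has nonzero diagonal.

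\textbf{Step 2 (linear independence).} Suppose $\{z_V\}_{V\in\PP(\Xi,<)}$ is linearly independent, and take a nontrivial finite relation among the $z_V$ with $V\in\PP(\Xi,\triangleleft)$. Choose a $\prec$-maximal multiset among those appearing. Expanding each term by Step 1, the coefficient of $z_{\Pi_<(V_{\max})}$ is a nonzero multiple of the original coefficient, since all other contributions involve strictly $\prec$-smaller words. This contradicts independence of the $<$-monomials. One point needs care: maximality must be checked against the $\prec$-smaller terms coming from the other summands. Because $\prec$ is a total order on multisets (degree, then the total order $<_{\rr,\lex}$), a maximal element exists among finitely many.

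\textbf{Step 3 (spanning).} Suppose $\{z_V\}_{V\in\PP(\Xi,<)}$ spans $H$. It suffices to show that each $z_W$ with $W\in\PP(\Xi,<)$ lies in the span $S$ of the $\triangleleft$-monomials. We induct on $\Pi_<$-multisets along $\prec$, noting that $\dd(W)\le\dd(V)$ bounds the degree. Using Step 1 applied to $V=\Pi_\triangleleft(W)$, we get $z_W\in\Bbbk^{\times}z_{V}+(\text{strictly }\prec\text{-smaller }z_{W'})$, and the smaller terms lie in $S$ by induction. This requires $\prec$ to be well-founded on $\{W'\in\PP(\Xi,<)\mid\dd(W')\le\gamma\}$. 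That follows because $\Gamma$ is well-ordered, $\{\alpha\le\gamma\}$ is handled degree by degree, and within a fixed degree Lemma~\ref{wellorder} shows that $<_{\rr,\dd}$ is well-founded on words. Here $<_{\rr,\lex}$ coincides with $<_{\rr}$ on words of equal content up to the proper-suffix case, and the proper-suffix case lowers the degree.

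\textbf{Main obstacle.} The delicate point is Step 3's well-foundedness: $<_{\rr,\lex}$ itself is not well-founded. The descent must therefore be organized by the pair (degree, $<_{\rr,\dd}$), exactly as in the proof of Lemma~\ref{rearrangement}. I would first verify that $\dd(W')\le\dd(V)$ together with $W'<_{\rr,\lex}V$ implies $W'<_{\rr,\dd}V$ or $\dd(W')<\dd(V)$, and then run the induction on $<_{\rr,\dd}$. The final sentence of the proposition follows by combining Steps 2 and 3.
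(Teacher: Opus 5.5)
Your proposal is correct and is essentially the paper's proof. Both arguments pass through the bijection $\Pi_{<}\colon \PP(\Xi,\triangleleft)\to\PP(\Xi,<)$ and use Lemma~\ref{rearrangement} to get a transition that is triangular with nonzero diagonal for the well-order $<_{\rr,\dd}$. Spanning then follows by induction along $<_{\rr,\dd}$, and independence by isolating the $<_{\rr,\dd}$-largest rearranged word. Your explicit check that $\dd(W)\le\dd(V)$ together with $W<_{\rr,\lex}V$ forces $W<_{\rr,\dd}V$ is exactly the step the paper uses tacitly; equal degree rules out the proper-suffix case because $\dd$ is positive.
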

	
	\begin{proof}
		Let $\triangleleft$ be a total order on $\Xi$. First observe that the rearrangement map $\Pi = \Pi_{<}\colon \PP(\Xi) \to \PP(\Xi, <)$ restricts to a bijection $\Pi\colon \PP(\Xi, \triangleleft) \to \PP(\Xi, <)$.
		
		Suppose $\{z_V\}_{V\in \PP(\Xi,<)}$ spans $H$. Let $A$ denote the subspace spanned by $\{z_V\}_{V\in \PP(\Xi,\triangleleft)}$. To show $A=H$, it suffices to verify $z_V\in A$ for every $V\in {\PP(\Xi,<)}$. We argue by induction on $V$ with respect to the well-order $<_{\rr,\dd}$. If $V=\emptyset$ is minimal, then $V\in \PP(\Xi,\triangleleft)$ and $z_{V} \in A$. If $V$ is not minimal, Lemma \ref{rearrangement} gives
		$$z_{\Pi^{-1}(V)} \in \Bbbk^{\times}\cdot z_{V} + \Span\{z_{W}\mid W\in\PP_{+}(\Xi,<),\; W<_{\rr,\dd} V\}.$$
		Rearranging,
		$$
		z_V \in \Bbbk^{\times}\cdot z_{\Pi^{-1}(V)} + \Span\{z_W\mid W \in \PP(\Xi,<),\; W <_{\rr,\dd} V \}.
		$$
		The induction hypothesis now implies $z_{V}\in A$ for all $V\in {\PP(\Xi,<)}$.
		
		Next suppose $\{z_V\}_{V\in\PP(\Xi,<)}$ is linearly independent. Assume for contradiction that $\{z_V\}_{V\in \PP(\Xi,\triangleleft)}$ is linearly dependent. Then there exist $n\geq 1$, distinct words $V_1,\dots,V_n \in \PP(\Xi,\triangleleft)$ and nonzero scalars $a_1,\dots,a_n \in \Bbbk^{\times}$ such that $\sum_{i=1}^{n}a_i z_{V_i}=0$. Without loss of generality, arrange the indices so that
		$$\Pi(V_1)<_{\rr,\dd}\Pi(V_2)<_{\rr,\dd} \cdots <_{\rr,\dd}\Pi(V_n).$$
		By Lemma \ref{rearrangement}, for each $i=1,\ldots, n$ there exists $b_{i}\in \Bbbk^{\times}$ with
		\begin{eqnarray*}
			z_{V_{i}} - b_{i} z_{\Pi(V_{i})} &\in& \Span\{z_W\mid W \in \PP(\Xi,<),\; W <_{\rr,\dd} \Pi(V_{i}) \} \\
			&\subseteq& \Span\{z_W\mid W \in \PP(\Xi,<),\; W <_{\rr,\dd} \Pi(V_{n}) \}.
		\end{eqnarray*}
		Substitution yields $z_{\Pi(V_n)}\in \Span\{ z_W\mid W\in\PP(\Xi,<),\; W <_{\rr,\dd} \Pi(V_n) \}$, contradicting linear independence.
	\end{proof}
	
	Recall that a \emph{filtration} on $H$ is a family of subspaces $\mathfrak{F}=\{H_{[m]}\}_{m\geq 0}$ satisfying:
	\begin{enumerate}
		\item $H_{[m]} \subseteq H_{[m+1]}$ for all $m\geq 0$;
		\item $H=\bigcup_{m\geq 0}H_{[m]}$ and $1_{H}\in H_{[0]}$;
		\item $H_{[m]} H_{[n]}\subseteq H_{[m+n]}$ for all $m,n\geq 0$.
	\end{enumerate}
	Given such a filtration, define the associated graded algebra
	$$\gr_{\mathfrak{F}}(H):=\bigoplus_{m\geq 0} H_{[m]}/H_{[m-1]}$$ with the convention $H_{[-1]}=0$. 
	Its multiplication is induced by $(x+H_{[l-1]})(y+H_{[m-1]}) =xy+H_{[l+m-1]}$ for $x\in H_{[l]}, y\in H_{[m]}$.
	
	\begin{lemma}\label{filtration-PBW}
		Assume $H$ admits a triangular PBW generating system $\GG=(\{z_{\xi}\}_{\xi\in \Xi}, <)$, where $\Xi = \{\xi_{1},\ldots, \xi_{n}\}$ is finite with $\xi_{1}<\cdots< \xi_{n}$. Suppose for all $1\leq i<j\leq n$,
		\[
		z_{\xi_{j}} z_{\xi_{i}} \in q_{ij} z_{\xi_{i}} z_{\xi_{j}}+ \Span\{z_{W} \mid W \in \PP_{+}(\Xi, <),\; W<_{\rr,\lex} \xi_{i}\xi_{j}\},\quad  q_{ij}\in \Bbbk^{\times}.
		\]
		Then there exist a filtration $\mathfrak{F}=\{H_{[m]}\}_{m\geq 0 }$ on $H$ and positive integers $d_{1},\ldots, d_{n}>0$ such that
		\[
		\gr_{\mathfrak{F}}(H) \,\cong\, \frac{\Bbbk\langle x_{1},\dots, x_{n} \mid d_{1},\dots, d_{n} \rangle}{\big(x_{j}x_{i} -q_{ij}x_{i}x_{j} \mid 1\leq i< j\leq n\big)}
		\]
		as $\mathbb{N}$-graded algebras. Here $\Bbbk\langle x_{1},\dots, x_{n} \mid d_{1},\dots, d_{n} \rangle$ denotes the $\mathbb{N}$-graded free algebra on $x_{1},\dots, x_{n}$, where each $x_i$ is assigned degree $d_i$.
	\end{lemma}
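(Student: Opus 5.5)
The plan is to grade $H$ by a weight $\dd$ that makes $\GG$ strongly $\dd$-triangular and then filter by weight. Since $\Xi$ is finite and $\GG$ is triangular, Lemma \ref{triangular-construct} (with $\Gamma=\mathbb{N}$) provides a positive map $\dd\colon\Xi\to\mathbb{N}$ for which $\GG$ is strongly $\dd$-triangular. We may take $\dd(\xi_k)=N^{k-1}$ for $N$ large. Set $d_k:=\dd(\xi_k)$, and define
\[
H_{[m]}:=\Span\{z_V\mid V\in\PP(\Xi,<),\ \dd(V)\le m\}.
\]
Conditions (1) and (2) of a filtration are immediate, and $1=z_\emptyset\in H_{[0]}$. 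For (3), take $V,V'\in\PP(\Xi,<)$. The product $z_Vz_{V'}=z_{VV'}$ is expanded by Lemma \ref{rearrangement} (strong triangularity implies $\dd$-triangularity). This shows $z_{VV'}\in \Bbbk^\times z_{\Pi(VV')}+\Span\{z_W\mid \dd(W)\le\dd(VV')\}$, and therefore $H_{[m]}H_{[n]}\subseteq H_{[m+n]}$.

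Next I would strengthen Lemma \ref{rearrangement} to a version with strict weight drop. The claim is that for every $V\in\PP(\Xi)$,
\[
z_V\in c_V\, z_{\Pi(V)}+\Span\{z_W\mid W\in\PP(\Xi,<),\ \dd(W)<\dd(V)\},
\]
where the scalar $c_V$ is the product of the $q_{ij}$ collected while sorting. The proof repeats the induction of Lemma \ref{rearrangement} on $<_{\mathfrak{l},\dd}$. When we swap $\xi\eta\mapsto\eta\xi$ at a descent, the hypothesis gives the main term $q\,z_{V_1\eta\xi V_2}$. By strong triangularity, every correction term $z_{V_1M_jV_2}$ has $\dd(M_j)<\dd(\eta\xi)$, so its weight is strictly less than $\dd(V)$. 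Applying the induction hypothesis to the correction terms keeps their expansions in weight $<\dd(V)$, since Lemma \ref{rearrangement} never raises weight. Applying it to the main term yields the claim.

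Now let $G:=\gr_{\mathfrak F}(H)$. By the PBW property, the images $\bar z_V$ for $V\in\PP(\Xi,<)$ with $\dd(V)=m$ form a basis of $H_{[m]}/H_{[m-1]}$. The strict-weight lemma shows that in $G$ we have $\bar z_{\xi_k}\in G_{d_k}$ and $\bar z_{\xi_j}\bar z_{\xi_i}=q_{ij}\bar z_{\xi_i}\bar z_{\xi_j}$ for $i<j$. This holds because the correction terms have weight $<d_i+d_j$; strict triangularity is exactly what kills them. Let $B$ be the quantum-affine-space quotient on the right-hand side of the statement. There is then a graded algebra map $\pi\colon B\to G$ sending $x_k\mapsto\bar z_{\xi_k}$. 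It is surjective because $\pi(x_{V})=\bar z_V$ for ordered $V$, and these elements span $G$. The ordered monomials span $B$ by using the relations. Since $\pi$ maps them to the basis $\{\bar z_V\}$ of $G$, they are also linearly independent, so $\pi$ is an isomorphism of $\mathbb{N}$-graded algebras.

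The main obstacle is the strict-weight refinement of Lemma \ref{rearrangement}. One must check that the induction only ever produces correction terms of strictly smaller weight when at least one non-leading swap occurs. One must also check that iterated expansions of the main term never reintroduce terms of full weight other than $z_{\Pi(V)}$. Both points follow from the fact that equal-weight terms arise only from the $q_{ij}$-swaps. Everything else is bookkeeping.
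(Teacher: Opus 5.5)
Your proposal is correct and matches the paper's proof step for step. You use Lemma \ref{triangular-construct} to get a weight $\dd$ making $\GG$ strongly $\dd$-triangular, filter by the $\dd$-weight of ordered monomials, derive the $q_{ij}$-commutation relations in $\gr_{\mathfrak F}(H)$, and show that the map from the quantum affine space sends ordered monomials to a basis. The strict-weight refinement of Lemma \ref{rearrangement} that you call the main obstacle is true but not needed. The relations $\bar z_{\xi_j}\bar z_{\xi_i}=q_{ij}\bar z_{\xi_i}\bar z_{\xi_j}$ are exactly strong $\dd$-triangularity, and the identity $\bar z_{\xi_{k_1}}\cdots\bar z_{\xi_{k_r}}=z_V+H_{[\dd(V)-1]}$ for ordered $V$ holds by the definition of multiplication in $\gr_{\mathfrak F}(H)$.
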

	
	\begin{proof}
		We may assume $n\geq 2$, since the case $n=1$ is trivial. By Lemma \ref{triangular-construct}, there exists a positive map $\dd\colon\Xi\to \mathbb{N}$ such that for all $1\leq i<j\leq n$,
		\[
		z_{\xi_{j}} z_{\xi_{i}} \in q_{ij} z_{\xi_{i}} z_{\xi_{j}} + \Span\{z_{W} \mid W \in \PP_{+}(\Xi, <),\; \dd(W) < \dd(\xi_{i}\xi_{j}),\; W<_{\rr,\lex} \xi_{i}\xi_{j} \}.
		\]
		For each integer $m\geq 0$, set
		\[
		H_{[m]} := \Span\{z_{V}\mid V\in \PP(\Xi, <),\; \dd(V) \leq m\}.
		\]
		By Lemma \ref{rearrangement}, $\mathfrak{F}=\{H_{[m]}\}_{m\geq 0 }$ is a filtration on $H$. For $i=1,\ldots, n$, let $d_{i} =\dd(z_{\xi_{i}})$ and write $\overline{z}_{\xi_{i}} = z_{\xi_{i}} + H_{[d_{i}-1]} \in \gr_{\mathfrak{F}}(H)$. One immediately checks $\overline{z}_{\xi_{j}} \overline{z}_{\xi_{i}} = q_{ij} \overline{z}_{\xi_{i}} \overline{z}_{\xi_{j}}$ for $1\leq i<j\leq n$, and $\overline{\GG}= (\{\overline{z}_{\xi}\}_{\xi\in \Xi}, <)$ is a homogeneous PBW generating system for $\gr_{\mathfrak{F}}(H)$. The algebra homomorphism $\Bbbk\langle x_{1},\ldots, x_{n} \mid d_{1},\ldots, d_{n} \rangle \to \gr_{\mathfrak{F}}(H)$ sending $x_{i} \mapsto \overline{z}_{\xi_{i}}$ therefore descends to the required isomorphism
		\[
		\frac{\Bbbk\langle x_{1},\dots, x_{n} \mid d_{1},\dots, d_{n} \rangle}{\big(x_{j}x_{i} -q_{ij}x_{i}x_{j} \mid 1\leq i< j\leq n\big)} \xrightarrow{\cong} \gr_{\mathfrak{F}}(H)
		\]
		of $\mathbb{N}$-graded algebras.
	\end{proof}
	
	We close the section by showing that algebras admitting such well-behaved PBW generating systems satisfy fundamental ring-theoretic and homological properties. All unexplained terminology is standard; references may be found for instance in \cite{Le92, RRZ14}.
	
	\begin{prop}\label{ring-homo-properties}
		Suppose $H$ admits a triangular PBW generating system $\GG=(\{z_{\xi}\}_{\xi\in \Xi}, <)$ with finite index set $\Xi$ of cardinality $n$. Then
		\begin{enumerate}
			\item $H$ is a Noetherian domain;
			\item $H$ has GK-dimension $n$, global dimension $n$, and Krull dimension $\leq n$;
			\item $H$ is Auslander regular, GK-Cohen-Macaulay and skew $n$-Calabi-Yau.
		\end{enumerate}
	\end{prop}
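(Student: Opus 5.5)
The strategy is to pass to an associated graded algebra that is a quantum affine space, establish all the properties there, and then lift them back to $H$ through the filtration. Since $\Xi$ is finite, triangularity of $\GG$ gives, for each pair $\eta<\xi$, a scalar $q_{\eta\xi}\in\Bbbk^{\times}$ with the lower-order term as in Lemma \ref{filtration-PBW}. That lemma then provides an $\mathbb{N}$-filtration $\mathfrak{F}$ on $H$ whose associated graded algebra is the quantum affine space
\[
\gr_{\mathfrak{F}}(H)\cong \Bbbk_{\mathbf q}[x_1,\dots,x_n]:=\frac{\Bbbk\langle x_{1},\dots, x_{n} \mid d_{1},\dots, d_{n} \rangle}{\big(x_{j}x_{i} -q_{ij}x_{i}x_{j} \mid 1\leq i< j\leq n\big)}
\]
with positive degrees. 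In addition, Lemma \ref{triangular-construct} supplies a positive map $\dd$ making $\GG$ strongly $\dd$-triangular. By Proposition \ref{PBW-imply-IOE}, $H$ is therefore an $n$-step iterated Ore extension $\Bbbk[z_{\xi_1};\sigma_1,\delta_1]\cdots[z_{\xi_n};\sigma_n,\delta_n]$ with $\xi_1<\cdots<\xi_n$.

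For (1), the Ore extension description suffices. An Ore extension $R[X;\sigma,\delta]$ with $\sigma$ an automorphism is Noetherian, and it is a domain whenever $R$ is; induction along the tower gives both properties for $H$. (The same conclusion can also be obtained by lifting from $\gr_{\mathfrak{F}}(H)$, which is a Noetherian domain.)

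For (2), the GK-dimension follows by applying Lemma \ref{GKdimension} to $\gr_{\mathfrak{F}}(H)$, whose images of the $z_\xi$ form a homogeneous PBW generating system of $n$ elements. Since $\mathfrak{F}$ is a finite-dimensional filtration, $\GKdim H=\GKdim\gr_{\mathfrak{F}}(H)=n$. For the global dimension I would use the standard filtered–graded inequality $\operatorname{gldim}H\le\operatorname{gldim}\gr_{\mathfrak{F}}(H)=n$, together with the bound $\operatorname{gldim}R[X;\sigma,\delta]\le\operatorname{gldim}R+1$ along the tower. Equality should come from the trivial module $\Bbbk$, taken via the counit-type augmentation $z_\xi\mapsto 0$; this is an algebra map because, by strong triangularity, all relations lie in the augmentation ideal. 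A Koszul-type complex lifted from $\gr$ should then give $\operatorname{Ext}^n_H(\Bbbk,H)\neq0$. Alternatively, equality follows from Auslander regularity combined with the Cohen–Macaulay property: the module $\Bbbk$ has GK-dimension $0$, so $j(\Bbbk)=n$. Krull dimension $\le n$ follows from $\operatorname{Kdim}R[X;\sigma,\delta]\le\operatorname{Kdim}R+1$ by induction.

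For (3), $\Bbbk_{\mathbf q}[x_1,\dots,x_n]$ is a connected graded, Noetherian, Auslander regular, GK-Cohen–Macaulay Artin–Schelter regular algebra of dimension $n$, and hence skew $n$-Calabi–Yau. Because $\mathfrak{F}$ is a positive exhaustive filtration with Noetherian associated graded algebra, the standard lifting results (Bj\"ork, Levasseur) transfer Auslander regularity and the Cohen–Macaulay property to $H$. For skew Calabi–Yau I would invoke the known result that filtered algebras whose associated graded algebra is connected AS-regular (equivalently, PBW deformations in the sense of \cite{RRZ14}) are skew Calabi–Yau of the same dimension. I expect this last transfer to be the main obstacle, since the filtration comes only from a weight on generators rather than from degree-one generation. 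If the filtered-lifting result proves awkward, the fallback is to note that an iterated Ore extension with automorphisms over a skew Calabi–Yau algebra remains skew Calabi–Yau, and to argue inductively along the tower from Proposition \ref{PBW-imply-IOE}.
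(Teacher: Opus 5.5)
Your proposal is correct and follows essentially the paper's route: the filtration of Lemma \ref{filtration-PBW} with a quantum affine space as associated graded algebra, lifting of GK-dimension, Auslander regularity, GK-Cohen--Macaulayness and the skew Calabi--Yau property from $\gr_{\mathfrak{F}}(H)$, and $\operatorname{gldim}H=n$ via $j(\Bbbk)=n$ for the augmentation module. That last step is your ``alternative'', which is exactly the paper's argument, so the vaguer Koszul-complex route can be dropped.

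The only deviations are equally valid: you read off Noetherianity, the domain property and the Krull bound from the Ore tower of Proposition \ref{PBW-imply-IOE}, rather than lifting them from $\gr_{\mathfrak{F}}(H)$. One small correction: the augmentation $z_V\mapsto\delta_{\emptyset,V}$ is multiplicative already by plain triangularity via Lemma \ref{rearrangement}, since the correction terms run over nonempty words; strong triangularity is not what gives this.
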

	
	\begin{proof}
		By Lemma \ref{filtration-PBW}, $H$ carries a filtration $\mathfrak{F}$ such that $\gr_{\mathfrak{F}}(H)$ satisfies all the listed properties.
		
		Part (1) follows from \cite[Theorem 1.6.6, Theorem 1.6.9]{MR01}. By \cite[Proposition 6.6]{KL00} and \cite[Lemma 6.5.6]{MR01}, $H$ has GK-dimension $n$ and Krull dimension $\leq n$. It is well known that a filtered algebra is Auslander regular, GK-Cohen-Macaulay and skew $n$-Calabi-Yau whenever its associated graded algebra has these properties, provided the associated graded algebra is locally finite; see \cite[Proposition 7.6]{ZSL19} and \cite[Theorem 4.4]{ZSL20-1}. This yields statement (3).
		
		It remains to verify that the global dimension of $H$ equals $n$. By \cite[Corollary 6.18]{MR01}, $\operatorname{gldim} H \leq n$. By Lemma \ref{rearrangement}, the linear map $\phi_{\emptyset}\colon H\to \Bbbk$ defined by $z_{V}\mapsto \delta_{\emptyset, V}$ is an algebra homomorphism, so $\Bbbk$ becomes a natural left $H$-module. Since $H$ is GK-Cohen-Macaulay,
		\[
		\GKdim(\Bbbk) + \min\{i\geq 0 \mid \operatorname{Ext}_{H}^{i} (\Bbbk, H) \neq 0\} = \GKdim (H) =n.
		\]
		Consequently $\operatorname{Ext}_{H}^{n} (\Bbbk, H) \neq  0$, hence $\operatorname{gldim} H = n$, as desired.
	\end{proof}

	\section{Thin replacements on  PBW generators}
	\label{sec:thin-replacement}
	
	This section develops the thin replacement method for PBW generators, a technique originated by Kharchenko \cite{Kh08} for studying coideal subalgebras of quantum  Borel algebras. This fundamental perturbation technique preserves all key structural properties of PBW generating systems introduced in the previous section, including $\dd$-triangularity and $(\Delta,\dd)$-domination. More importantly, this method serves as a  systematic tool to produce PBW generating systems for subalgebras from those of the  ambient algebra under mild assumptions. 
	
	We retain all notation and conventions established in the preceding sections. To streamline notation further, for every non-decreasing word $V\in \mathcal{P}(\Xi, <)$, define
	\begin{eqnarray*}
		H[\GG, V,\dd] &:=& \Span\big\{z_{W} \,\big|\, W \in \mathcal{P}(\Xi, <),\; \dd(W) \leq \dd(V),\; W<_{\rr,\lex} V \big\}; \\
		H_{+}[\GG, V,\dd] &:=& \Span\big\{z_{W} \,\big|\, W \in \mathcal{P}_{+}(\Xi, <),\; \dd(W) \leq \dd(V),\; W<_{\rr,\lex} V \big\}.
	\end{eqnarray*}
	
	\subsection{Definitions and basic properties}
	
	\begin{definition}\label{thin replacement}
		A \emph{$\dd$-thin replacement}
		of $\GG=(\{z_{\xi}\}_{\xi\in \Xi}, <)$ is an ordered family
		$\hat{\GG}=(\{\hat{z}_{\xi}\}_{\xi\in \Xi}, <)$ with the same index set and total order such that for every $\xi\in\Xi$,
		\[
		\hat{z}_{\xi}-z_{\xi}\in  H_{+}[\GG,\xi,\dd].
		\]
	\end{definition}
	
	\begin{lemma}\label{replacement-control}
		Let $\hat{\GG}=(\{\hat{z}_{\xi}\}_{\xi\in \Xi}, <)$ be a $\dd$-thin replacement of $\GG=(\{z_{\xi}\}_{\xi\in \Xi}, <)$. Assume $\GG$ is $\dd$-triangular. Then $\hat{z}_V -z_{V} \in H_{+}[\GG,V,\dd]$ for any non-decreasing word $V\in \mathcal{P}(\Xi,<)$.
	\end{lemma}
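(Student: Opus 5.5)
We argue by induction on the length of $V=\xi_{1}\cdots\xi_{n}$ with $\xi_{1}\leq\cdots\leq\xi_{n}$. For $n=0$ both sides equal $1$, and for $n=1$ the claim is exactly Definition \ref{thin replacement}. For $n\geq 2$ we write $V=\xi U$ with $\xi=\xi_{1}$ and $U=\xi_{2}\cdots\xi_{n}\in\PP(\Xi,<)$. Then
\[
\hat z_{V}-z_{V}=(\hat z_{\xi}-z_{\xi})\,\hat z_{U}+z_{\xi}\,(\hat z_{U}-z_{U}),
\]
and we expand $\hat z_{U}=z_{U}+(\hat z_{U}-z_{U})$ in the first term using the induction hypothesis. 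It then suffices to show that each of the products
\[
z_{W}z_{U}\ \bigl(W\in\mathcal{A}\bigr),\qquad z_{W}z_{W'}\ \bigl(W\in\mathcal{A},\,W'\in\mathcal{B}\bigr),\qquad z_{\xi}z_{W'}\ \bigl(W'\in\mathcal{B}\bigr)
\]
lies in $H_{+}[\GG,V,\dd]$. Here $\mathcal{A}$ is the set of $W\in\PP_{+}(\Xi,<)$ with $\dd(W)\le\dd(\xi)$ and $W<_{\rr,\lex}\xi$, and $\mathcal{B}$ is the set of $W'\in\PP_{+}(\Xi,<)$ with $\dd(W')\le\dd(U)$ and $W'<_{\rr,\lex}U$.

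Each such product has the form $z_{D}$ for a word $D$ (generally not non-decreasing). Lemma \ref{rearrangement} then gives $z_{D}\in\Span\{z_{E}\mid E\in\PP(\Xi,<),\ \dd(E)\le\dd(D),\ E\le_{\rr,\lex}\Pi(D)\}$. Since $\dd$ is additive and $\Gamma$ is ordered monotonically, in all three cases $\dd(D)\le\dd(\xi)+\dd(U)=\dd(V)$. Moreover every $E$ that occurs is nonempty, because $\dd(E)$ equals $\dd(D)$ on the leading term and all other terms come from $\PP_{+}$. A more careful check: the rearrangement lemma only produces nonempty lower terms, and the leading term $\Pi(D)$ is nonempty.

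The key step is the order estimate $\Pi(D)<_{\rr,\lex}\Pi(\xi U)=V$, which we obtain from Lemma \ref{order-compare-rearrangement}(4) in each case.
\begin{itemize}
\item[(i)] For $D=WU$ we combine $\Pi(U)\le_{\rr,\lex}\Pi(U)$ with $W<_{\rr,\lex}\xi$, after commuting the concatenation, since $\Pi(WU)=\Pi(UW)$ and $\Pi(\xi U)=\Pi(U\xi)$.
\item[(ii)] For $D=\xi W'$ we use $\xi\le\xi$ together with $W'<_{\rr,\lex}U$.
\item[(iii)] For $D=WW'$ we use $W<_{\rr,\lex}\xi$ (hence $\le$) together with $W'<_{\rr,\lex}U$.
\end{itemize}
Then $E\le_{\rr,\lex}\Pi(D)<_{\rr,\lex}V$ gives strict inequality, so every $z_{E}$ lies in $H_{+}[\GG,V,\dd]$, which closes the induction.

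The main obstacle is the precise use of Lemma \ref{order-compare-rearrangement}: it is stated for $\Pi$ of concatenations, and it requires the first comparison to be non-strict and the second strict. We therefore have to arrange each product so that the strict comparison is in the second slot, using the commutativity $\Pi(XY)=\Pi(YX)$. We also have to make sure that $W<_{\rr,\lex}\xi$ for non-decreasing words indeed feeds into the lemma correctly. Finally, the weight bound $\dd(E)\le\dd(V)$ is immediate from the weight bounds in Lemma \ref{rearrangement}.
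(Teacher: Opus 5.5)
Your proof is correct and is essentially the paper's argument. The paper expands $\prod_{i}(z_{\xi_i}+f_{\xi_i})-\prod_{i}z_{\xi_i}$ over all proper subsets at once and applies Lemma~\ref{rearrangement} to each term, whereas you split off the first letter and induct on length, and your explicit appeals to Lemma~\ref{order-compare-rearrangement}(4) (to get $\Pi(D)<_{\rr,\lex}V$) and to monotonicity in $\Gamma$ (to get $\dd(D)\le\dd(V)$) spell out what the paper's one-line citation of Lemma~\ref{rearrangement} leaves implicit.
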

	
	\begin{proof}
		Set $f_{\xi} := \hat{z}_\xi - z_\xi$. By definition, $f_{\xi} \in  H_{+}[\GG,\xi,\dd]$. Write $V=\xi_1\cdots\xi_n$. We expand
		\[
		\hat{z}_V -z_{V} = \prod_{i=1}^{n}(z_{\xi_i}+f_{\xi_i})-\prod_{i=1}^{n}z_{\xi_{i}} = \sum_{I\subset \{1,\ldots, n\}} g_{I,1}\cdots g_{I,n},
		\]
		where $g_{I,j} =z_{\xi_{j}}$ if $j\in I$, and $g_{I,j} =f_{\xi_{j}}$ if $j\not\in I$. By Lemma \ref{rearrangement}, each product $g_{I,1}\cdots g_{I,n}$ lies in $H_{+}[\GG,V,\dd]$. The assertion follows immediately.
	\end{proof}
	
	\begin{lemma}\label{equality of H}
		Let $\hat{\GG}=(\{\hat{z}_{\xi}\}_{\xi\in \Xi}, <)$ be a $\dd$-thin replacement of $\GG=(\{z_{\xi}\}_{\xi\in \Xi}, <)$. Assume $\GG$ is $\dd$-triangular. Then for each non-decreasing word $V\in \mathcal{P}(\Xi, <)$,
		\[
		H_{+}[\GG,V,\dd]=H_{+}[\hat{\GG},V,\dd]
		\quad \text{and} \quad
		H[\GG,V,\dd]=H[\hat{\GG},V,\dd].
		\]
	\end{lemma}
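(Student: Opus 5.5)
The plan is to prove the first equality $H_{+}[\GG,V,\dd]=H_{+}[\hat{\GG},V,\dd]$ and then deduce the second from it, since $H[\GG,V,\dd]=\Bbbk\cdot 1+H_{+}[\GG,V,\dd]$ (because $\emptyset<_{\rr,\lex}V$ and $\dd(\emptyset)=0\le\dd(V)$ for $V\neq\emptyset$; the case $V=\emptyset$ is trivial since both sides are $0$, and $\hat z_\emptyset=z_\emptyset=1$). Throughout, write $\mathcal{S}(V)=\{W\in\PP_{+}(\Xi,<)\mid \dd(W)\le\dd(V),\ W<_{\rr,\lex}V\}$. This set is closed downward in the following sense: if $W\in\mathcal{S}(V)$ and $W'\in\mathcal{S}(W)$, then $W'\in\mathcal{S}(V)$. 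This holds by transitivity of $\le$ on $\Gamma$ and of $<_{\rr,\lex}$. Consequently, $H_{+}[\GG,W,\dd]\subseteq H_{+}[\GG,V,\dd]$ whenever $W\in\mathcal{S}(V)$.

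For the inclusion $\supseteq$, take $W\in\mathcal{S}(V)$. By Lemma \ref{replacement-control}, $\hat z_W\in z_W+H_{+}[\GG,W,\dd]\subseteq H_{+}[\GG,V,\dd]$, so every spanning vector of $H_{+}[\hat\GG,V,\dd]$ lies in $H_{+}[\GG,V,\dd]$.

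For the inclusion $\subseteq$, we must show $z_W\in H_{+}[\hat\GG,V,\dd]$ for each $W\in\mathcal{S}(V)$. We argue by well-founded induction on $W$ with respect to $<_{\rr,\dd}$ on $\PP(\Xi)$, which is well-founded by Lemma \ref{wellorder} thanks to the well-foundedness clause in $\dd$-triangularity. Lemma \ref{replacement-control} gives $z_W=\hat z_W-g$ with $g\in H_{+}[\GG,W,\dd]$, so $g$ is a finite combination of $z_{W'}$ with $W'\in\mathcal{S}(W)\subseteq\mathcal{S}(V)$. Each such $W'$ satisfies $W'<_{\rr,\dd}W$: either $\dd(W')<\dd(W)$, or $\dd(W')=\dd(W)$ and $W'<_{\rr,\lex}W$. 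In the latter case, two non-decreasing words of equal weight cannot be proper suffixes of one another, since a nonempty remainder would have positive weight; so the relation is genuinely $<_{\rr}$. By induction each $z_{W'}\in H_{+}[\hat\GG,V,\dd]$, and $\hat z_W$ lies there since $W\in\mathcal{S}(V)$. Hence $z_W\in H_{+}[\hat\GG,V,\dd]$.

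The main point requiring care is this last step: the induction must run over an order that is well-founded and in which every correction term is strictly smaller. That is why $<_{\rr,\dd}$ is used instead of $<_{\rr,\lex}$, which is not a well order. The remark that a proper suffix forces strictly smaller $\dd$-weight is what transfers the comparison from $<_{\rr,\lex}$ into $<_{\rr,\dd}$.
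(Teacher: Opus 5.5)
Your proof is correct and is essentially the paper's argument: a well-founded induction driven by Lemma~\ref{replacement-control}, together with transitivity of the two constraints $\dd(\cdot)\le\dd(V)$ and $<_{\rr,\lex}V$. The differences are minor. You obtain $\supseteq$ without induction, and you induct on $W$ along $<_{\rr,\dd}$ (well-founded by Lemma~\ref{wellorder}) rather than on $V$ along the paper's auxiliary order, where $D<_{\dd}E$ iff $\dd(D)\le\dd(E)$ and $D<_{\rr,\lex}E$. Your proper-suffix observation also shows $<_{\dd}\subseteq <_{\rr,\dd}$, which supplies the well-foundedness of $<_{\dd}$ that the paper only asserts ``by an argument analogous to Lemma~\ref{wellorder}''.
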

	\begin{proof}
		Define a partial order $<_{\dd}$ on $\mathcal{P}(\Xi)$: for $D, E\in \mathcal{P}(\Xi)$, set $D<_{\dd} E$ if and only if $\dd(D) \leq \dd(E)$ and $D <_{\rr,\lex} E$. An argument analogous to Lemma \ref{wellorder} shows that $<_{\dd}$ is well-founded. We prove $H_{+}[\GG,V,\dd]=H_{+}[\hat{\GG},V,\dd]$ by induction on $V$ with respect to $<_{\dd}$.
		
		The equality trivially holds for the minimal element $V=\emptyset$. Now suppose $V\neq \emptyset$. By the induction hypothesis together with Lemma \ref{replacement-control}, for every $W <_{\dd} V$ we have
		\[
		\hat{z}_{W} - z_{W} \in H_{+}[\GG,W,\dd] = H_{+}[\hat{\GG},W,\dd].
		\]
		This implies
		\[
		\hat{z}_{W} \in z_{W}+ H_{+}[\GG,W,\dd] \subseteq H_{+}[\GG,V,\dd],\qquad
		z_{W} \in \hat{z}_{W} + H_{+}[\hat{\GG},W,\dd] \subseteq H_{+}[\hat{\GG},V,\dd].
		\]
		Hence $H_{+}[\GG,V,\dd]=H_{+}[\hat{\GG},V,\dd]$. Moreover,
		\[
		H[\GG,V,\dd] = H_{+}[\GG,V,\dd] + \Bbbk z_{\emptyset},\qquad
		H[\hat{\GG},V,\dd] = H_{+}[\hat{\GG},V,\dd] + \Bbbk z_{\emptyset},
		\]
		so the equality $H[\GG,V,\dd]=H[\hat{\GG},V,\dd]$ follows.
	\end{proof}
	
	\begin{prop}\label{replacement facts}
		Let $\hat{\GG}=(\{\hat{z}_{\xi}\}_{\xi\in \Xi}, <)$ be a $\dd$-thin replacement of $\GG=(\{z_{\xi}\}_{\xi\in \Xi}, <)$. Assume $\GG$ is $\dd$-triangular. If $\GG$ forms a system of PBW generators of $H$, then so does $\hat{\GG}$.
	\end{prop}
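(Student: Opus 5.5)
The plan is a unitriangular change-of-basis argument with respect to the well-founded order used in Lemma \ref{equality of H}. Let $<_{\dd}$ be the partial order on $\PP(\Xi)$ given by $D<_{\dd}E$ iff $\dd(D)\le\dd(E)$ and $D<_{\rr,\lex}E$; it is well-founded, as observed in the proof of Lemma \ref{equality of H}. The key input is Lemma \ref{replacement-control}: for every $V\in\PP(\Xi,<)$,
\[
\hat z_V\in z_V+H_{+}[\GG,V,\dd],
\]
and the correction term is spanned by $z_W$ with $W\in\PP(\Xi,<)$ and $W<_{\dd}V$. So the transition matrix from $\{z_V\}$ to $\{\hat z_V\}$, both indexed by $\PP(\Xi,<)$, is unitriangular with respect to $<_{\dd}$.

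\emph{Spanning.} I would show by well-founded induction on $V\in\PP(\Xi,<)$ with respect to $<_{\dd}$ that $z_V\in\hat A:=\Span\{\hat z_W\mid W\in\PP(\Xi,<)\}$. Alternatively, I would simply invoke Lemma \ref{equality of H}: it gives $z_V=\hat z_V-(\hat z_V-z_V)$ with $\hat z_V-z_V\in H_{+}[\GG,V,\dd]=H_{+}[\hat\GG,V,\dd]\subseteq\hat A$. Hence $z_V\in\hat A$ for all $V$, and since $\{z_V\}$ spans $H$, so does $\{\hat z_V\}$.

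\emph{Linear independence.} Suppose $\sum_{i=1}^n a_i\hat z_{V_i}=0$ with distinct $V_i\in\PP(\Xi,<)$ and all $a_i\ne0$. Since $<_{\dd}$ is only a partial order, I would choose $V_k$ to be maximal among $V_1,\dots,V_n$ for the total order $<_{\rr,\dd}$; a word $W<_{\dd}V$ satisfies $W<_{\rr,\dd}V$, and $<_{\rr,\dd}$ refines the relevant comparison. Expanding each $\hat z_{V_i}$ as $z_{V_i}$ plus a combination of $z_W$ with $W<_{\dd}V_i$ gives $z_W$ terms with $W$ different from $V_k$. Either $W$ is some $V_i<_{\rr,\dd}V_k$, or $W<_{\dd}V_i$.

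\emph{Main obstacle.} The step needing care is confirming that $W<_{\dd}V$ implies $W<_{\rr,\dd}V$. If $\dd(W)<\dd(V)$ this is immediate. If $\dd(W)=\dd(V)$ and $W<_{\rr,\lex}V$, then $W$ cannot be a proper suffix of $V$, since $\dd$ is positive. So $W<_{\rr}V$, hence $W<_{\rr,\dd}V$. Granting this, the coefficient of $z_{V_k}$ in the expansion is $a_k\ne0$, contradicting the linear independence of $\{z_V\}$. This yields a basis, so $\hat\GG$ is a PBW generating system.
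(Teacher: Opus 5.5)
Your proposal is correct and follows essentially the same route as the paper. Spanning comes from Lemma \ref{equality of H} together with Lemma \ref{replacement-control}, and linear independence comes from expanding each $\hat z_{V_i}$ via Lemma \ref{replacement-control} and isolating the $<_{\rr,\dd}$-largest word. Your explicit check that $\dd(W)\le\dd(V)$ and $W<_{\rr,\lex}V$ force $W<_{\rr,\dd}V$ is exactly the point the paper uses tacitly: a proper suffix would have strictly smaller $\dd$-value, by positivity of $\dd$.
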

	
	\begin{proof}
		Suppose $\GG$ is a PBW generating system for $H$. From Lemma \ref{equality of H},
		\[
		H=\sum_{V\in \mathcal{P}(\Xi,<)} H[\GG, V, \dd] = \sum_{V\in \mathcal{P}(\Xi,<)} H[\hat{\GG}, V, \dd] = \Span \{\hat{z}_{W} \mid W\in \mathcal{P}(\Xi,<)\}.
		\]
		It remains to verify linear independence of $\{\hat{z}_{V}\}_{V\in\mathcal{P}(\Xi,<)}$. Assume for contradiction that $\sum_{i=1}^{m}c_{i} \hat{z}_{V_{i}}=0$ for some $m\geq 1$, distinct words $V_{1},\dots,V_{m} \in \mathcal{P}(\Xi,<)$ and nonzero scalars $c_{1},\dots,c_{m} \in \Bbbk^{\times}$. We may order the indices such that
		\[
		V_{1}<_{\rr,\dd}V_{2}<_{\rr,\dd} \cdots <_{\rr,\dd}V_{m}.
		\]
		Using Lemma \ref{replacement-control}, each $\hat{z}_{V_i}=z_{V_i}+\sum_{U<_{\rr,\dd}V_i}c_{i,U}z_{U}$, hence
		\[
		0=\sum_{i=1}^{m} c_{i} \hat{z}_{V_{i}}
		= c_{m} z_{V_{m}} + \sum_{P<_{\rr,\dd}V_{m}} d_{P}\,z_{P}
		\]
		for suitable scalars $c_{i,U},d_{P} \in \Bbbk$. This contradicts linear independence of $\{z_{V}\}_{V\in\mathcal{P}(\Xi,<)}$.
	\end{proof}
	
	The following proposition shows that the structural conditions for PBW generating systems introduced in the previous section are preserved under thin replacement.
	
	\begin{prop}\label{proposition of replacement}
		Let $\hat{\GG}=(\{\hat{z}_{\xi}\}_{\xi\in \Xi}, <)$ be a $\dd$-thin replacement of  $\GG=(\{z_{\xi}\}_{\xi\in \Xi}, <)$.
		\begin{enumerate}
			\item If $\GG$ is $\dd$-triangular, then so is $\hat{\GG}$;
			\item If $\GG$ is $\dd$-triangular and $(\Delta,\dd)$-dominated, then so is $\hat{\GG}$.
		\end{enumerate}
	\end{prop}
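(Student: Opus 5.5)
The plan is to transport the defining expansions of $\GG$ to $\hat{\GG}$. The only tools needed are Lemma \ref{replacement-control}, Lemma \ref{equality of H} and Lemma \ref{rearrangement}. The well-foundedness requirement on $<|_{\dd^{-1}(\gamma)}$ involves only $\Xi$, $<$ and $\dd$, all of which $\hat{\GG}$ shares with $\GG$. So in both parts only the multiplicative and comultiplicative expansions need checking.

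For (1), fix $\eta<\xi$ and write $\hat z_\xi=z_\xi+f_\xi$ and $\hat z_\eta=z_\eta+f_\eta$, with $f_\xi\in H_+[\GG,\xi,\dd]$ and $f_\eta\in H_+[\GG,\eta,\dd]$. Expanding gives
\[\hat z_\xi\hat z_\eta=z_\xi z_\eta+z_\xi f_\eta+f_\xi z_\eta+f_\xi f_\eta.\]
The goal is to show that every summand other than $z_\xi z_\eta$ lies in $H_+[\GG,\eta\xi,\dd]$.
\begin{enumerate}
\item A term of $z_\xi f_\eta$ has the form $z_\xi z_W$ with $\dd(W)\le\dd(\eta)$ and $W<_{\rr,\lex}\eta$. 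By Lemma \ref{rearrangement} it lies in $\Bbbk z_{\Pi(\xi W)}+H_+[\GG,\Pi(\xi W),\dd]$.
\item Lemma \ref{order-compare-rearrangement} gives $\Pi(W\xi)<_{\rr,\lex}\Pi(\eta\xi)$, and clearly $\dd(W\xi)\le\dd(\eta\xi)$. Hence the term lies in $H_+[\GG,\eta\xi,\dd]$.
\item A term of $f_\xi z_\eta$ is $z_W z_\eta$ with $W<_{\rr,\lex}\xi$, which gives $\Pi(W\eta)<_{\rr,\lex}\eta\xi$. The products $f_\xi f_\eta$ are handled in the same way.
\end{enumerate}
By $\dd$-triangularity of $\GG$, $z_\xi z_\eta\in q z_\eta z_\xi+H_+[\GG,\eta\xi,\dd]$ for some $q\in\Bbbk^{\times}$. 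By Lemma \ref{replacement-control}, $z_\eta z_\xi\in\hat z_\eta\hat z_\xi+H_+[\GG,\eta\xi,\dd]$. Finally, Lemma \ref{equality of H} replaces $H_+[\GG,\eta\xi,\dd]$ by $H_+[\hat\GG,\eta\xi,\dd]$, which proves (1).

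For (2), $\hat\GG$ is $\dd$-triangular by (1), so it remains to show it is $(\Delta,\dd)$-dominated. Write $\Delta(\hat z_\xi)=\Delta(z_\xi)+\sum_W c_W\Delta(z_W)$, where the sum runs over $W\in\PP_+(\Xi,<)$ with $\dd(W)\le\dd(\xi)$ and $W<_{\rr,\lex}\xi$.
\begin{enumerate}
\item Apply Proposition \ref{comultiplication} to each $\Delta(z_W)$. The principal terms are $\binom{W}{E}z_E\otimes z_{W/E}$, and those with $E\in\{\emptyset,W\}$ reassemble into $1\otimes f_\xi+f_\xi\otimes1$. This converts $1\otimes z_\xi+z_\xi\otimes1$ into $1\otimes\hat z_\xi+\hat z_\xi\otimes1$.
\item All other terms are $z_P\otimes z_Q$ with $P,Q$ nonempty and $\dd(PQ)\le\dd(W)\le\dd(\xi)$. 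For these one must check $P,Q<_{\rr}\xi$.
\end{enumerate}
The key observation is that for a single letter $\xi$ and $W\in\PP_+(\Xi,<)$, the relation $W<_{\rr,\lex}\xi$ means the last letter of $W$ is $<\xi$, or $W$ is $\xi$ preceded by a nonempty word. In the second case $\dd(W)>\dd(\xi)$, which is excluded. So the last letter of $W$ is $<\xi$, and therefore every proper subword $E\neq\emptyset$ of $W$, and each word $P,Q$ arising in $\mathcal X(W)$, has last letter $\le$ the last letter of $W$ and hence $<\xi$, i.e. $P,Q<_{\rr}\xi$. For $\mathcal X(W)$ this needs care: $P<_{\rr}W$ or $P<_{\rr,\lex}W$ with $P\neq\xi$-ending, and again a $\xi$-ending $P$ would force $\dd(P)\ge\dd(\xi)$, contradicting $\dd(PQ)\le\dd(\xi)$ with $Q\neq\emptyset$.

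The last step is to convert $z_P\otimes z_Q$ into combinations of $\hat z_{P'}\otimes\hat z_{Q'}$. By Lemma \ref{equality of H}, $z_P\in\hat z_P+H_+[\hat\GG,P,\dd]$, and the words $P'$ occurring there satisfy $\dd(P')\le\dd(P)$ and $P'<_{\rr,\lex}P$. These again have last letter $<\xi$ by the same degree argument. I expect this careful bookkeeping of the $<_{\rr}\xi$ condition under the lexicographic perturbations to be the main obstacle; the degree bound $\dd(\cdot)\le\dd(\xi)$ is what rules out the bad prefix case each time.
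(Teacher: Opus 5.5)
Your proposal is correct and is essentially the paper's proof. In (1) the paper likewise expands the perturbed products, places each cross term in $H_{+}[\GG,\eta\xi,\dd]$ via Lemma \ref{rearrangement}, and passes to $\hat{\GG}$ with Lemma \ref{equality of H}. In (2) it applies Proposition \ref{comultiplication} to each $z_{W}$, reassembles the $E\in\{\emptyset,W\}$ terms into $1\otimes\hat z_{\xi}+\hat z_{\xi}\otimes 1$, and substitutes $z_{T}\in\hat z_{T}+H_{+}[\hat{\GG},T,\dd]$, leaving implicit the $P,Q<_{\rr}\xi$ check that you carry out.

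One small correction to that check: for nonempty $W$, $W<_{\rr,\lex}\xi$ simply means the last letter of $W$ is $<\xi$. A word $W'\xi$ with $W'\neq\emptyset$ satisfies $\xi<_{\rr,\lex}W'\xi$, since $\xi$ is a proper suffix of it, so your ``bad prefix'' case never arises. Moreover, any nonempty $P<_{\rr,\lex}W$ has last letter at most that of $W$, so $<_{\rr}\xi$ propagates directly and the degree bound is needed only for $\dd(PQ)\le\dd(\xi)$.
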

	
	\begin{proof}
		(1) Assume $\GG$ is $\dd$-triangular. For any pair $\eta<\xi\in\Xi$, there exists $a\in \Bbbk^{\times}$ with
		\[
		z_\xi z_\eta \in a\cdot z_\eta z_\xi + H_{+}[\GG,\eta\xi,\dd].
		\]
		Write $f_{\xi} = \hat{z}_{\xi} -z_{\xi}$ and $f_{\eta} = \hat{z}_{\eta} -z_{\eta}$. Expanding,
		\[
		\begin{aligned}
			\hat{z}_\xi\hat{z}_\eta - a\cdot\hat{z}_\eta\hat{z}_\xi
			&= (z_\xi+f_\xi)(z_\eta+f_\eta) - a\,(z_\eta+f_\eta)(z_\xi+f_\xi) \\
			&= (z_\xi z_\eta - a\,z_\eta z_\xi)
			+ z_\xi f_\eta + f_\xi z_\eta + f_\xi f_\eta
			- a\,z_\eta f_\xi - a\,f_\eta z_\xi - a\,f_\eta f_\xi \\
			&\in z_\xi f_\eta + f_\xi z_\eta + f_\xi f_\eta
			- a\,z_\eta f_\xi - a\,f_\eta z_\xi - a\,f_\eta f_\xi + H_{+}[\GG,\eta\xi,\dd].
		\end{aligned}
		\]
		We claim each term $z_\xi f_\eta, f_\xi z_\eta, z_\eta f_\xi, f_\eta z_\xi, f_\xi f_\eta, f_\eta f_\xi$ belongs to $H_{+}[\GG,\eta\xi,\dd]$. We verify one representative case. Write
		$$f_\xi=\sum_{j=1}^{s} c_j z_{W_j},$$
		where $c_{j}\in \Bbbk$, $W_{j} \in \mathcal{P}_{+}(\Xi, <)$ satisfy $\dd(W_{j}) \leq \dd(\xi)$ and $W_{j} <_{\rr,\lex} \xi$. By Lemma \ref{rearrangement},
		\[
		z_\eta f_\xi = \sum_j c_j z_\eta z_{W_j}
		\in \sum_{j} \bigl(\Bbbk^\times\!\cdot\! z_{\Pi(\eta W_j)}
		+ H_{+}[\GG,\Pi(\eta W_j),\dd]\bigr)
		\subseteq H_{+}[\GG,\eta\xi,\dd].
		\]
		Identical reasoning applies to the remaining terms. Using Lemma \ref{equality of H},
		\[
		\hat{z}_\xi\hat{z}_\eta \in a\cdot\hat{z}_\eta\hat{z}_\xi
		+ H_{+}[\GG,\eta\xi,\dd]= a\cdot\hat{z}_\eta\hat{z}_\xi
		+ H_{+}[\hat{\GG},\eta\xi,\dd].
		\]
		Thus $\hat{\GG}$ is $\dd$-triangular.
		
		(2) Assume $\GG$ is $\dd$-triangular and $(\Delta,\dd)$-dominated. Part (1) already yields $\dd$-triangularity of $\hat{\GG}$, so we only need to check $(\Delta,\dd)$-domination. Fix $\xi\in \Xi$. By definition of $\dd$-thin replacement,
		\[
		\hat{z}_{\xi} =z_{\xi} + \sum_{j}c_{j} z_{W_{j}},
		\]
		with $c_{j}\in \Bbbk$, $W_{j} \in \mathcal{P}_{+}(\Xi, <)$, $\dd(W_{j}) \leq \dd(\xi)$ and $W_{j} <_{\rr,\lex} \xi$. Applying Proposition \ref{comultiplication},
		\begin{eqnarray*}
			\Delta(\hat{z}_{\xi}) & = & \Delta(z_{\xi}) + \sum_{j}c_{j} \Delta(z_{W_{j}}) \\
			& = & 1\otimes z_\xi + z_\xi\otimes 1 + \sum_{i}a_{i}\,z_{M_{i}}\otimes z_{N_{i}} \\
			&& + \sum_{j} c_{j} \Big( \sum_{E\mid W_{j}}  \binom{W_{j}}{E} z_{E} \otimes z_{W_{j}/E} + \sum_{k_{j}} b_{k_{j}} z_{P_{k_{j}}} \otimes z_{Q_{k_{j}}} \Big) \\
			& =& 1\otimes \hat{z}_\xi + \hat{z}_\xi\otimes 1 +  \sum_{i}a_{i}\,z_{M_{i}}\otimes z_{N_{i}} \\
			&& + \sum_{j} c_{j}  \Big(  \sum_{\substack{E\mid W_{j} \\ E\neq \emptyset, E\neq W_{j}}} \binom{W_{j}}{E} z_{E} \otimes z_{W_{j}/E} + \sum_{k_{j}} b_{k_{j}} z_{P_{k_{j}}} \otimes z_{Q_{k_{j}}} \Big),
		\end{eqnarray*}
		where $a_{i}, b_{k_{j}} \in \Bbbk$, $M_{i},N_{i}, P_{k_{j}}, Q_{k_{j}}\in\mathcal{P}_{+}(\Xi,<)$ satisfy
		$\dd(M_{i} N_{i}) \leq \dd(\xi)$, $M_{i}, N_{i}<_{\rr}\xi$,
		$\dd(P_{k_{j}}Q_{k_{j}}) \leq \dd(W_{j})$ and $P_{k_{j}}, Q_{k_{j}} <_{\rr,\lex} W_{j}$.
		
		From Lemma \ref{replacement-control} and Lemma \ref{equality of H}, for every $T\in \mathcal{P}(\Xi, <)$ we have
		\[
		z_{T} \in \hat{z}_{T} + H_{+}[\hat{\GG}, T, \dd].
		\]
		Substitute this relation for $T= M_{i}, N_{i}, E, W_{j}/E, P_{k_{j}}, Q_{k_{j}}$. It follows that
		\[
		\Delta(\hat{z}_{\xi}) \in 1\otimes \hat{z}_\xi + \hat{z}_\xi\otimes 1 + \Span \big \{\hat{z}_{P}\otimes \hat{z}_{Q} \,\big|\, P,Q\in \mathcal{P}_{+}(\Xi, <),\; \dd(PQ) \leq \dd(\xi),\; P, Q <_{\rr} \xi \big\}.
		\]
		Hence $\hat{\GG}$ is $(\Delta,\dd)$-dominated.
	\end{proof}
	
	\subsection{Construction of PBW generators for subalgebras}
	
	In this subsection, under suitable hypotheses we use thin replacements to construct PBW generating systems for subalgebras starting from a PBW generating system  of the ambient algebra.
	
	Let $A$ be a subalgebra of $H$. Define the subset $\Xi(A,\GG,\dd)\subseteq\Xi$ via
	\begin{eqnarray}\label{sub-index-set}
		\Xi(A,\GG,\dd):=\{\,\xi\in\Xi\mid (z_{\xi}+H[\GG,\xi,\dd]) \cap  A \neq \emptyset \,\}.
	\end{eqnarray}
	Note that $\xi \in \Xi(A,\GG,\dd)$ is equivalent to $(z_{\xi}+H_{+}[\GG,\xi,\dd]) \cap  A \neq \emptyset$, since $z_{\emptyset}=1_{H} \in A$. Observe that we may construct a $\dd$-thin replacement $\hat{\GG}=(\{\hat{z}_{\xi}\}_{\xi \in \Xi}, <)$ of $\GG$ satisfying $\hat{z}_{\xi} \in A$ for all $\xi\in \Xi(A,\GG,\dd)$. Indeed, for each $\xi\in \Xi(A,\GG,\dd)$ pick any element
	\[
	\hat{z}_{\xi}\in (z_{\xi}+H_{+}[\GG,\xi,\dd]) \cap  A;
	\]
	for all $\xi\notin \Xi(A,\GG,\dd)$, simply set $\hat{z}_{\xi}=z_{\xi}$.
	
	\begin{definition}
		Let $(\Xi,<)$ be a totally ordered set. A map $\dd\colon\Xi\to \Gamma$ is called \emph{order-preserving} if $\dd(\eta)\leq \dd(\xi)$ whenever $\eta< \xi$.
	\end{definition}
	
	\begin{thm}\label{PBW-sub-hereditary}
		Let $\GG=(\{z_{\xi}\}_{\xi\in \Xi}, <)$ be a PBW generating system of $H$.  Let $A$ be a subalgebra of $H$ such that  either $\Delta(A)\subseteq A\otimes H$ or $\Delta(A)\subseteq H\otimes A$.  Assume $\GG$ is $\dd$-triangular and $(\Delta,\dd)$-dominated for some order-preserving positive map $\dd\colon\Xi\to \Gamma$. 
		If $\hat{\mathbb{G}}=(\{\hat{z}_{\xi}\}_{\xi\in \Xi},<)$ is any $\dd$-thin replacement of $\mathbb{G}$ satisfying $\hat{z}_{\xi}\in A$ for all $\xi\in \Xi(A,\mathbb{G},\dd)$,
		then the restricted ordered family
		\[
		\hat{\mathbb{G}}\big|_{\Xi(A,\mathbb{G},\dd)}
		=\bigl(\{\hat{z}_{\xi}\}_{\xi\in \Xi(A,\mathbb{G},\dd)},\,{<}\big|_{\Xi(A,\mathbb{G},\dd)}\bigr)
		\]
		constitutes a PBW generating system for $A$ and is $\dd\big|_{\Xi(A,\mathbb{G},\dd)}$-triangular.
	\end{thm}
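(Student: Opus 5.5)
Write $\Xi' := \Xi(A,\GG,\dd)$. The plan is to replace $\GG$ by $\hat{\GG}$ throughout and work with leading terms. By Proposition \ref{replacement facts} and Proposition \ref{proposition of replacement}, $\hat{\GG}$ is a $\dd$-triangular, $(\Delta,\dd)$-dominated PBW generating system of $H$. Lemma \ref{equality of H} gives $H[\GG,V,\dd]=H[\hat{\GG},V,\dd]$, so $\Xi(A,\hat{\GG},\dd)=\Xi'$. Hence we may assume $\hat z_\xi = z_\xi$, with $z_\xi\in A$ for all $\xi\in\Xi'$. We treat the case $\Delta(A)\subseteq H\otimes A$; the other case is symmetric, using $\phi_U\otimes\id$ in place of $\id\otimes\phi_U$.

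\textbf{Linear independence and triangularity.} Linear independence of $\{z_V\}_{V\in\PP(\Xi',<)}$ is inherited from $H$. Spanning is the real content: we must show $A\subseteq B:=\Span\{z_V\mid V\in\PP(\Xi',<)\}$. Triangularity then follows. For $\eta<\xi$ in $\Xi'$, the commutator relation lies in $A$, so by spanning its expansion uses only words in $\PP(\Xi',<)$. By uniqueness of the expansion in $H$, these words satisfy the $\dd$-triangular constraints. Well-foundedness of the restricted order is inherited directly.

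\textbf{Spanning.} Suppose $A\not\subseteq B$. Among elements $f\in A\setminus B$, look at the $<_{\rr,\dd}$-leading word $V$ of the support. This order is well-founded by Lemma \ref{wellorder}. Subtracting elements of $B\cap A=B$ (note $B\subseteq A$ since $A$ is a subalgebra), we may assume no word of $\supp(f)$ lies in $\PP(\Xi',<)$. Choose $f$ with $V$ minimal, normalized so that $f\in z_V+\Span\{z_W\mid W<_{\rr,\dd}V\}$. Write $V=\xi_1\cdots\xi_n$ and let $\xi=\xi_n$ be its largest letter. Set $U=V/\xi$, so that $V/U=\xi$. The coideal condition gives $(\phi_U\otimes\id)\Delta(f)\in A$, and Corollary \ref{core facts} gives
\[
(\phi_U\otimes\id)\Delta(f)\in \tbinom{V}{U}z_\xi+\Span\{z_N\mid N<_{\rr,\dd}\xi\},
\]
with $\binom{V}{U}\neq0$ in characteristic zero.

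\textbf{Main obstacle.} The key step is to upgrade ``$N<_{\rr,\dd}\xi$'' to the thin condition ``$\dd(N)\le\dd(\xi)$ and $N<_{\rr,\lex}\xi$''. Once this holds, $\xi\in\Xi'$. To get it, I would use the order-preserving hypothesis on $\dd$ together with a second choice of $U$. If $\dd(N)<\dd(\xi)$ but $N\not<_{\rr,\lex}\xi$, then $N$ must contain a letter $\ge\xi$. That forces $\dd(N)\ge\dd(\xi)$, a contradiction. If instead $\dd(N)=\dd(\xi)$ with $N<_{\rr}\xi$, then the last letter of $N$ is smaller than $\xi$, so $N<_{\rr,\lex}\xi$. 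This yields $\xi\in\Xi'$.

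Having $\xi\in\Xi'$, there are two cases. If $V\neq\xi$, then $U$ is nonempty and I induct: apply the same argument with $U'=\xi$ to extract an element of $A$ with leading word $U$, and then iterate on the letters of $V$. This shows every letter of $V$ lies in $\Xi'$, so $V\in\PP(\Xi',<)$. If $V=\xi$ itself, the conclusion is immediate. Either way $V\in\PP(\Xi',<)$, contradicting the reduction, and so $A=B$.

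The delicate point I expect is the first bullet above, when $\dd(N)<\dd(\xi)$ and $N\not<_{\rr,\lex}\xi$. There one needs to check carefully that order-preservation alone (non-strict) suffices, possibly by using the finer $\dd$-bound in the second part of Corollary \ref{core facts} instead of the first.
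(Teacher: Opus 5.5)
Your proposal is correct and follows the paper's proof essentially step for step. The spanning argument by $<_{\rr,\dd}$-leading words is the same. Extracting the largest letter $\xi$ via Corollary \ref{core facts} with $\phi_{V/\xi}\otimes\id$, then recursing on length with $\phi_{\xi}\otimes\id$, is exactly Lemma \ref{index-set-induced}, and the triangularity argument is identical. Your preliminary replacement of $\GG$ by $\hat{\GG}$, justified via Lemma \ref{equality of H}, merely repackages the paper's use of Lemma \ref{replacement-control}.

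Your closing worry is unfounded: non-strict order-preservation already gives $\dd(N)\ge\dd(\xi)$ in the first case, exactly as you argued, so no second choice of $U$ is needed. One small slip: in the case $\Delta(A)\subseteq A\otimes H$ one uses $\id\otimes\phi_U$ in place of $\phi_U\otimes\id$, not the other way round as you wrote.
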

	
	The proof relies on the following technical lemma.
	
	\begin{lemma}\label{index-set-induced}
		Let $\GG=(\{z_{\xi}\}_{\xi\in \Xi}, <)$ be a PBW generating system of $H$. Let $A$ be a subalgebra of $H$ satisfying either $\Delta(A)\subseteq A\otimes H$ or $\Delta(A)\subseteq H\otimes A$. Suppose $\GG$ is $\dd$-triangular and $(\Delta,\dd)$-dominated for some order-preserving positive map $\dd\colon\Xi\to \Gamma$.  If for some word
		$V\in \mathcal{P}_{+}(\Xi, <)$,
		\[
		\big ( z_{V} + \Span\{z_{W} \mid W\in \mathcal{P}(\Xi, <),\; W<_{\rr,\dd} V\} \big) \cap A\neq \emptyset,
		\]
		then  $V\in \mathcal{P}_{+}\big(\Xi(A,\GG,\dd), {<}|_{\Xi(A,\GG,\dd)}\big)$.
	\end{lemma}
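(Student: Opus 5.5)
We argue by induction on $V$ with respect to the well-order $<_{\rr,\dd}$ (Lemma \ref{wellorder}), treating the case $\Delta(A)\subseteq A\otimes H$; the other case is symmetric, using $\phi_U\otimes\id_H$ in place of $\id_H\otimes\phi_U$. Fix $f\in A$ with $f\in z_V+\Span\{z_W\mid W<_{\rr,\dd}V\}$. Write $V=\xi_1\cdots\xi_n$ with $\xi_1\le\cdots\le\xi_n$. The task is to show that every letter $\xi_i$ lies in $\Xi(A,\GG,\dd)$. It suffices to show this for the largest letter $\xi:=\xi_n$ and for the word $V/\xi$, since every letter of $V$ is either $\xi$ or a letter of $V/\xi$.

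\textbf{Step 1: peeling off letters.} Take a nonempty proper subword $U\mid V$. Apply $\id_H\otimes\phi_U$ to $\Delta(f)\in A\otimes H$; the result lies in $A$. By Corollary \ref{core facts}, it lies in
\[
\binom{V}{U}z_{V/U}+\Span\{z_N\mid N<_{\rr,\dd}V/U\}.
\]
Since $\operatorname{char}\Bbbk=0$, the binomial coefficient is nonzero, so after rescaling we obtain an element of $A$ of the form required by the hypothesis, for the word $V/U<_{\rr,\dd}V$. By induction, all letters of $V/U$ lie in $\Xi(A,\GG,\dd)$. Varying $U$ over the single letters $\xi_i$ (when $n\ge2$) shows that every letter of $V$ is covered. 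It therefore remains to treat the case $n=1$, i.e.\ $V=\xi$.

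\textbf{Step 2: the one-letter case.} Now $f\in z_\xi+\Span\{z_W\mid W<_{\rr,\dd}\xi\}$, and we must upgrade this to $f\in z_\xi+H[\GG,\xi,\dd]$. We need every $W$ in the support of $f$ to satisfy $\dd(W)\le\dd(\xi)$ and $W<_{\rr,\lex}\xi$. The degree condition is automatic from $W<_{\rr,\dd}\xi$. If $\dd(W)<\dd(\xi)$, we still need $W<_{\rr,\lex}\xi$, which fails when $W$ contains a letter $\ge\xi$. This is the main obstacle, and it is where order-preservation of $\dd$ enters: a letter $\eta>\xi$ has $\dd(\eta)\ge\dd(\xi)$, hence cannot occur in a word of degree $<\dd(\xi)$. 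So any offending $W$ of smaller degree ends in $\xi$ itself, with $W=W'\xi$ and $W'$ nonempty. But then $\dd(W)>\dd(\xi)$, a contradiction.

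It remains to check the words $W$ with $\dd(W)=\dd(\xi)$. For these, $W<_{\rr}\xi$ means the last letter of $W$ is $<\xi$, which gives $W<_{\rr,\lex}\xi$. Hence $f\in(z_\xi+H[\GG,\xi,\dd])\cap A$, so $\xi\in\Xi(A,\GG,\dd)$.

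\textbf{Step 3: assembling the induction.} The base of the induction is trivial. The argument of Step 2 handles length one directly, without appeal to the induction hypothesis. For length $n\ge2$, Step 1 produces elements of $A$ attached to strictly smaller words covering every letter, and the induction hypothesis applies to each of them. I expect the only real care to lie in Step 2: confirming that the order-preserving hypothesis exactly rules out large letters inside lower-degree words. The rest is bookkeeping with Corollary \ref{core facts}.
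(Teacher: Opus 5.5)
Your proposal is correct and follows essentially the paper's route. You slice $\Delta(f)$ with $\phi_U$ via Corollary \ref{core facts} to pass to shorter words, and you settle the one-letter case using that $\dd$ is order-preserving; the differences (inducting on $<_{\rr,\dd}$ rather than on length, treating the $A\otimes H$ case, and removing single letters $U$ rather than splitting $V=V_{1}\xi$) are cosmetic.
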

	
	\begin{proof}
		We treat the case $\Delta(A) \subseteq H\otimes A$; the alternative case follows symmetrically. We proceed by induction on the length of $V$. Write $\Xi':=\Xi(A,\GG,\dd)$ for brevity.
		
		First suppose $V=\xi$ has length one. By hypothesis, there exists $g_{\xi}$ such that $z_{\xi} +g_{\xi} \in A$ and
		$g_{\xi}\in \Span\{z_{W} \mid W\in \mathcal{P}(\Xi, <),\; W<_{\rr,\dd} \xi\}.$
		Since $\dd$ is order-preserving,
		\[
		W<_{\rr,\dd} \xi \quad \Longrightarrow \quad \dd(W) \leq \dd(\xi) \quad \text{and} \quad W<_{\rr,\lex} \xi.
		\]
		Hence $g_{\xi}\in H[\GG, \xi, \dd]$, which yields $\xi\in \Xi'$.
		
		Now let $V=V_{1} \xi$ have length greater than one. By assumption, there exists $$g_{V}\in \Span\{z_{W} \mid W\in \mathcal{P}(\Xi, <),\; W<_{\rr,\dd} V\}$$
		with $f_{V}=z_{V} +g_{V} \in A$.
		Set $\lambda_{V}:=\binom{V}{V_{1}}^{-1}$. By Corollary \ref{core facts},
		\[
		\big(\phi_{V_{1}} \otimes \id_{H} \big) \big (\Delta(\lambda_{V} f_{V})\big) \in \Big(  z_{\xi} + \Span \{z_{W} \mid W\in \mathcal{P}(\Xi, <),\; W<_{\rr,\dd} \xi\} \Big) \cap A.
		\]
		From the length-one case, $\xi \in \Xi'$. Also by Corollary \ref{core facts},
		\[
		\big(\phi_{\xi} \otimes \id_{H} \big) \big (\Delta(\lambda_{V} f_{V})\big) \in  \Big(  z_{V_{1}} + \Span \{z_{W} \mid W\in \mathcal{P}(\Xi, <),\; W<_{\rr,\dd} V_{1}\} \Big) \cap A.
		\]
		The induction hypothesis gives $V_{1}\in \mathcal{P}_{+}(\Xi', {<}|_{\Xi'})$. Therefore $V\in \mathcal{P}_{+}(\Xi', {<}|_{\Xi'})$.
	\end{proof}
	
	\begin{proof}[Proof of Theorem \ref{PBW-sub-hereditary}]
		Simplify notation by setting $\Xi'=\Xi(A,\GG,\dd)$ and $<'={<}|_{\Xi(A,\GG,\dd)}$.
		
		From Proposition \ref{replacement facts}, the family $\{\hat{z}_{V}\}_{V\in \mathcal{P}(\Xi', <')}$ is linearly independent. Let $$B=\Span\{\hat{z}_{V}\mid V\in \mathcal{P}(\Xi', <')\}\subseteq A.$$ We prove $f\in B$ for any nonzero element $f\in A$ by induction on $\operatorname{lw}_{\GG}(f)$ with respect to $<_{\rr,\dd}$, where $\operatorname{lw}_{\GG}(f)$ denotes the maximal element of $\supp_{\GG}(f)$ under $<_{\rr,\dd}$.
		
		If $\operatorname{lw}_{\GG}(f)=\emptyset$, then $f=a\cdot 1_{H}$ for some $a\in \Bbbk$, hence $f\in B$. Now suppose $\operatorname{lw}_{\GG}(f)= V \neq \emptyset$. Then there exists $a\in \Bbbk^{\times}$ such that
		\[
		a f\in \big ( z_{V} + \Span\{z_{W} \mid W\in \mathcal{P}(\Xi, <),\; W<_{\rr,\dd} V\} \big) \cap A.
		\]
		Lemma \ref{index-set-induced} implies $V\in \mathcal{P}_{+}(\Xi', <')$, so $\hat{z}_{V}\in B$. By Lemma \ref{replacement-control},
		\[
		af - \hat{z}_{V} =af-z_{V} +z_{V}- \hat{z}_{V} \in \big ( \Span\{z_{W} \mid W\in \mathcal{P}(\Xi, <),\; W<_{\rr,\dd} V\} \big) \cap A.
		\]
		If $af - \hat{z}_{V}=0$, then $f= a^{-1}\hat{z}_{V}\in B$. Otherwise, $\operatorname{lw}_{\GG}(af - \hat{z}_{V})  <_{\rr,\dd} \operatorname{lw}_{\GG}(f)$. The induction hypothesis yields $af - \hat{z}_{V} \in B$, hence $f\in B$. We conclude that $\{\hat{z}_{V}\}_{V\in \mathcal{P}(\Xi', <')}$ spans $A$. Therefore $\hat{\GG}|_{\Xi'}$ is a PBW generating system for $A$.
		
		It remains to establish that $\hat{\GG}|_{\Xi'}$ is $\dd|_{\Xi'}$-triangular. By Proposition \ref{proposition of replacement}(1), for any $\eta<\xi\in\Xi'$,
		\[
		\hat{z}_\xi\hat{z}_\eta \in  \Bbbk^{\times}\cdot\hat{z}_\eta\hat{z}_\xi
		+ H_{+}[\hat{\GG},\eta\xi,\dd] \cap A.
		\]
		Every element of $A$ has $\hat{\GG}$-support contained in $\mathcal{P}(\Xi',<')$. Consequently,
		\begin{eqnarray*}
			H_{+}[\hat{\GG},\eta\xi,\dd] \cap A
			&=& \Span\{\hat{z}_{W}\mid W\in \mathcal{P}_{+}(\Xi', <'),\; \dd(W) \leq \dd(\eta\xi),\; W<_{\rr,\lex} \eta\xi\} \\
			&=& A_{+}[\hat{\GG}|_{\Xi'},\eta\xi,\dd|_{\Xi'}].
		\end{eqnarray*}
		The second equality holds by definition. Thus
		\[
		\hat{z}_\xi\hat{z}_\eta \in  \Bbbk^{\times}\cdot\hat{z}_\eta\hat{z}_\xi + A_{+}[\hat{\GG}|_{\Xi'},\eta\xi,\dd|_{\Xi'}]
		\]
		for all $\eta<\xi\in\Xi'$, so $\hat{\GG}|_{\Xi'}$ is $\dd|_{\Xi'}$-triangular.
	\end{proof}

	\section{Iterated Hopf Ore extensions}
	\label{sec:IHOE-structure}
	
	Iterated Hopf Ore extensions (IHOEs) of $\Bbbk$ form a fundamental class of connected Hopf algebras with finite GK-dimension. In this section, we revisit the definition of IHOEs and explore their intrinsic structure leveraging the full machinery of PBW generating systems and the thin replacement method established in previous sections. Our main result demonstrates that every Hopf subalgebra and every quotient Hopf algebra of an IHOE of $\Bbbk$ is again an IHOE of $\Bbbk$. We further prove that left and right coideal subalgebras of an IHOE of $\Bbbk$ admit decompositions into ascending chains of subalgebras of the same type, where each inclusion corresponds to an Ore extension. Complementing these structural investigations, we introduce minimal PBW generating systems to give an explicit description of coradical filtrations, alongside a sharp criterion determining when an IHOE of $\Bbbk$ is an IHOE of its primitive part.
	
	Let us recall standard notions and notation for Hopf algebras. For any Hopf algebra $H$, we write $\Delta_{H}$ for its comultiplication and $\varepsilon_{H}$ for its counit. Set $H^{+} =\ker(\varepsilon_{H})$. The space of primitive elements of $H$ is denoted by $P(H)$. The \emph{coradical} of $H$, written $H_{(0)}$, is the sum of all simple subcoalgebras of $H$. A Hopf algebra is \emph{connected} if its coradical $H_{(0)}$ is one-dimensional. For each integer $n\geq 0$, the $n$-th layer of the \emph{coradical filtration} of $H$ is denoted by $H_{(n)}$. Whenever $H_{(0)}$ is a Hopf subalgebra of $H$ (in particular, if $H$ is connected), the coradical filtration is a Hopf algebra filtration. In this case, the associated $\mathbb{N}$-graded coalgebra
	\[
	\gr_{c}(H) = \bigoplus_{n\geq 0}H_{(n)}/H_{(n-1)},
	\]
	carries the structure of an $\mathbb{N}$-graded Hopf algebra, with the convention $H_{(-1)}=0$.
	
	\subsection{Definitions and basic properties}

	In Definition \ref{Ore extension}, we introduced Ore extensions and iterated Ore extensions of algebras. These constructions naturally extend to Hopf algebras, which motivates the following definition.
	
	\begin{definition}\label{Hopf Ore extension}
		Let $R$ be a Hopf algebra.
		\begin{enumerate}
			\item A \emph{Hopf Ore extension} of $R$ is a Hopf algebra $H$ containing $R$ as a Hopf subalgebra such that $H=R[X;\sigma,\delta]$ is an Ore extension of $R$ at the level of underlying algebras.
			
			\item An \emph{$n$-step iterated Hopf Ore extension} (\emph{$n$-step IHOE}) of $R$, where $n\geq 0$ is an integer, is a Hopf algebra $H$ equipped with an ascending chain of Hopf subalgebras
			\[
			R=H_{0} \subset H_{1} \subset \cdots \subset H_{n} =H
			\]
			such that $H_{i}$ is an Ore extension of $H_{i-1}$ for each $i=1,\dots, n$. The notation
			\[
			H=R[X_{1};\sigma_{1},\delta_{1}]\cdots [X_{n};\sigma_{n},\delta_{n}]
			\]
			indicates that $H_{0}=R$, $H_{n}=H$, and $H_{i}= H_{i-1}[X_{i};\sigma_{i},\delta_{i}]$ for all $i=1,\dots,n$. 
		\end{enumerate}
		Note that the $0$-step IHOE of $R$ is $R$ itself, and a $1$-step IHOE coincides with a Hopf Ore extension of $R$. We say that $H$ is an \emph{IHOE}  of $R$ if $H$ is an $n$-step IHOE of $R$ for some integer $n\geq 0$.
	\end{definition}
	
	Graded Hopf Ore extensions and ($n$-step) graded iterated Hopf Ore extensions  of graded Hopf algebras can be defined analogously.
	
	\begin{remark}\label{HOE-definition-compare}
		The definition of Hopf Ore extensions presented above is substantially weaker than the original definition in \cite[Definition 1.0]{Pa03} and the revised formulation from \cite[Definition 2.1]{BOZZ15}. Those two earlier versions impose extra conditions on $\Delta_{H}(X)$. As established in \cite[Proposition 2.8]{BOZZ15}, the revised formulation therein coincides with the definition above provided $R$ is connected. Moreover, Huang \cite{Hu20} proved that the formulation \cite[Definition 2.1]{BOZZ15} is equivalent to the version above whenever $R$ is noetherian and $R\otimes R$ is a domain.
	\end{remark}
	
	\begin{prop}\label{IHOE properties}
		Let $H = \Bbbk[X_{1};\sigma_{1},\delta_{1}]\cdots[X_{n};\sigma_{n},\delta_{n}]$ be an IHOE of $\Bbbk$.
		\begin{enumerate}
			\item $H$ is a noetherian connected Hopf algebra of GK-dimension $n$.
			\item For $i=1,\ldots, n$, one has
			\begin{eqnarray}\label{comultiplication-PBW}
				\Delta_{H}(X_{i}) \in  1\otimes X_{i} + X_{i}\otimes1+ H_{i-1} \otimes H_{i-1}.
			\end{eqnarray}
			\item For $i=1,\dots,n$, there exists a character $\chi_{i}:H_{i-1}\to\Bbbk$ such that
			\[
			\sigma_{i} (f) = \sum \chi_{i}(f_{(1)}) f_{(2)} = \sum f_{(1)} \chi_{i}(f_{(2)}), \quad f \in H_{i-1}.
			\]
			\item For $1\leq i<j\leq n$, one has $\sigma_{j} (X_{i})\in X_{i} +H_{i-1}$ and
			\begin{eqnarray}\label{permutation-PBW}
				X_{j}X_{i} \in  X_{i}X_{j} + H_{i-1} X_{j} + H_{j-1}.
			\end{eqnarray}
			\item $\GG=(\{z_{i}\}_{i\in \mathbb{I}_{n}}, <)$ is a triangular and $\Delta_{H}$-dominated PBW generating system of $H$, where $z_{i} = X_{i} -\varepsilon_{H}(X_{i})$ and $<$ is the natural total order on $\mathbb{I}_{n}$ given by $1<\cdots < n$.
		\end{enumerate}
	\end{prop}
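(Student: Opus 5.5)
I would prove the five parts roughly in the order (2), (3), (4), (5), (1), proceeding by induction on $i$ along the chain $\Bbbk=H_0\subset H_1\subset\cdots\subset H_n=H$. Each $H_{i}$ is a Hopf Ore extension of the connected Hopf algebra $H_{i-1}$. So by Remark \ref{HOE-definition-compare} (namely \cite[Proposition 2.8]{BOZZ15}) it satisfies the revised definition \cite[Definition 2.1]{BOZZ15}: $\Delta(X_i)=s\otimes X_i+X_i\otimes t+w$ with $s,t\in H_{i-1}$ and $w\in H_{i-1}\otimes H_{i-1}$. Connectedness (inductively, $H_{i-1}$ is connected, hence has only the trivial grouplike) should force $s=t=1$. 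This gives (2). To do this, I would first apply the counit axiom in the free $H_{i-1}$-basis $\{X_i^k\}$ to get $\varepsilon(s)=\varepsilon(t)=1$. I would then use coassociativity, comparing the coefficients of terms linear in $X_i$, to see that $s,t$ are grouplike, hence equal to $1$. Part (3) is exactly \cite[Theorem 2.4]{BOZZ15} (Panov's character formula) applied to the extension $H_{i-1}\subset H_i$. I would simply cite it, noting that its hypotheses hold once (2) is known.

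For (4), I would apply the formula in (3) to $X_i\in H_{i-1}$ for $i<j$, using (2) for $\Delta(X_i)$. This gives $\sigma_j(X_i)=\chi_j(1)X_i+\chi_j(X_i)\cdot 1+\sum\chi_j(a')a''$ with $a',a''\in H_{i-1}$, so $\sigma_j(X_i)\in X_i+H_{i-1}$. The Ore relation $X_jX_i=\sigma_j(X_i)X_j+\delta_j(X_i)$ with $\delta_j(X_i)\in H_{j-1}$ then yields \eqref{permutation-PBW}.

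For (5), Lemma \ref{IOE facts} shows that $\GG$ is a PBW generating system, since $z_i\in X_i+H_{i-1}$ ($\varepsilon(X_i)$ is a scalar). For $\Delta_H$-domination, (2) gives $\Delta(z_i)=1\otimes z_i+z_i\otimes1+w_i$ with $w_i\in H_{i-1}\otimes H_{i-1}$. Applying $\varepsilon\otimes\id$ and $\id\otimes\varepsilon$ shows that $w_i\in H_{i-1}^+\otimes H_{i-1}^+$. Expanding in the basis $z_P\otimes z_Q$ with $P,Q\in\PP_+(\II_{i-1},<)$ gives $P,Q<_{\rr} i$, because the last letter of each of them is $<i$. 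For triangularity, I rewrite \eqref{permutation-PBW} in terms of the $z$'s: $z_jz_i\in z_iz_j+H_{i-1}z_j+H_{j-1}$. Then $H_{i-1}z_j$ is spanned by $z_Wz_j$ with $W$ a word in letters $<i$, i.e.\ words $<_{\rr,\lex}ij$. Elements of $H_{j-1}$ are spanned by ordered words in letters $<j$, which are $<_{\rr,\lex} j\le_{\rr,\lex} ij$. The scalar term (empty word) is the one delicate point. I would expect to eliminate it by applying $\varepsilon$, since $\varepsilon(z_k)=0$ and $\varepsilon$ kills all other terms. Finally, (1) follows from (5) via Proposition \ref{ring-homo-properties} (noetherian, GK-dimension $n$). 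Connectedness follows by induction from (2): $H$ is spanned by $z_V$, and Proposition \ref{comultiplication} (after choosing $\dd$ via Lemma \ref{triangular-construct}) shows that the filtration by $\dd$-degree is a coalgebra filtration with degree-zero part $\Bbbk$, so the coradical is $\Bbbk$.

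I expect the main obstacle to be (2), specifically the normalization $s=t=1$ of the skew-primitive coefficients. Everything else is bookkeeping with the word orders. If the connectedness argument gets messy, I would fall back on citing \cite[Proposition 2.8 and Theorem 3.2]{BOZZ15} directly.
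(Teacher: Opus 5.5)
Your proposal is correct. For part (5) it coincides with the paper's argument: Lemma \ref{IOE facts} gives the PBW property, the counit places the correction term of $\Delta_H(z_i)$ in $(H_{i-1})^{+}\otimes(H_{i-1})^{+}$, and the counit again removes the scalar from $z_jz_i-z_iz_j$ (the paper writes this as $(H_{j-1})^{+}$).

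The difference lies in (1)--(4). The paper handles them purely by citing \cite[Proposition 2.2]{BZ22} and \cite[Theorem 3.2]{BOZZ15}, and flags a discrepancy between these sources about whether the $X_i$ must be changed. You import only the shape of $\Delta(X_i)$ from \cite[Proposition 2.8]{BOZZ15} and derive the rest yourself:
\begin{itemize}
\item $s=t=1$, from the counit, from coassociativity (comparing coefficients of $1\otimes1\otimes X_i$ and $X_i\otimes1\otimes1$ in the free left $H_{i-1}^{\otimes3}$-module $H_i^{\otimes3}$), and from connectedness of $H_{i-1}$;
\item (4), from (3);
\item (1), from (5) via Proposition \ref{ring-homo-properties}, together with the filtration $\Span\{z_V\mid \dd(V)\le m\}$. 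Proposition \ref{comultiplication} shows this is a coalgebra filtration with degree-zero part $\Bbbk$, which forces the coradical to be $\Bbbk$.
\end{itemize}
This is not circular, since Section \ref{sec:triangular} never uses the present proposition. Your induction is also well founded: connectedness of $H_{i-1}$ yields (2) for $X_i$, which then yields (5) and connectedness for $H_i$.

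What your route buys is near self-containment within the paper's own machinery. It also shows that once $\Delta(X_i)$ is known to be linear in $X_i$ on each side, no change of variables is needed for (2), which is exactly the subtlety the paper flags. The paper's route is shorter but rests entirely on the external sources.

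Two small remarks:
\begin{itemize}
\item The shape in \cite[Definition 2.1]{BOZZ15} also allows a term $v(X_i\otimes X_i)$ with $v\in H_{i-1}\otimes H_{i-1}$. You should dispose of it before your counit/coassociativity argument, e.g.\ via \cite[Theorem 2.4]{BOZZ15}. That result applies because $H_{i-1}\otimes H_{i-1}$ is an IOE of $\Bbbk$, hence a domain.
\item Part (3) needs no citation. Apply $\Delta_H$ to $X_ir=\sigma_i(r)X_i+\delta_i(r)$ and compare coefficients of $1\otimes X_i$ and $X_i\otimes 1$. This gives $\Delta_H(\sigma_i(r))=\sum r_{(1)}\otimes\sigma_i(r_{(2)})=\sum\sigma_i(r_{(1)})\otimes r_{(2)}$, so $\chi_i=\varepsilon_H\circ\sigma_i$ works directly.
\end{itemize}
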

	
	\begin{proof}
		Parts (1)--(4) are merely a reformulation of \cite[Proposition 2.2]{BZ22} and portions of \cite[Theorem 3.2]{BOZZ15}. One subtle point requires clarification. According to \cite[Proposition 2.2]{BZ22}, a change of variables for $X_{1},\dots,X_{n}$ may be necessary to ensure validity of relation \eqref{comultiplication-PBW}. In contrast, \cite[Proposition 2.8]{BOZZ15} claims that no such coordinate transformation is needed.
		
		We now turn to the proof of Part (5). By Lemma \ref{IOE facts}, $\GG$ forms a PBW generating system of $H$. Using Parts (2) and (4), one readily verifies that
		\begin{eqnarray*}
			& \Delta_{H}(z_{i}) \in  1\otimes z_{i} + z_{i}\otimes1+ (H_{i-1})^{+} \otimes (H_{i-1})^{+},    & \quad i=1,\ldots, n; \\
			& z_{j}z_{i} \in  z_{i}z_{j} + (H_{i-1}) z_{j} + (H_{j-1})^{+}, & \quad 1\leq i<j\leq n.
		\end{eqnarray*}
		Moreover,
		\begin{eqnarray*}
			&(H_{i-1})^{+} =\Span\{z_{V}\mid V\in \mathcal{P}_{+}(\mathbb{I}_{n}, <), ~ V<_{\rr} i\}, &  \quad i=1,\ldots, n,
		\end{eqnarray*}
		which implies that $\GG$ is triangular and $\Delta_{H}$-dominated.
	\end{proof}
	
	The next result can be regarded as a converse of Proposition \ref{IHOE properties} (5).
	
	\begin{prop}\label{PBW-imply-IHOE}
		Suppose a Hopf algebra $H$ possesses a finite PBW generating system $\GG=(\{z_{\xi}\}_{\xi\in \Xi}, <)$ that is triangular and $\Delta_{H}$-dominated. Let $\Xi=\{\xi_{1},\ldots, \xi_{n}\}$ be ordered so that $\xi_{1} < \cdots < \xi_{n}$. Then $H$ is an $n$-step IHOE of $\Bbbk$ with presentation
		\[
		H=\Bbbk[z_{\xi_{1}};\sigma_{1},\delta_{1}]\cdots [z_{\xi_{n}};\sigma_{n},\delta_{n}].
		\]
	\end{prop}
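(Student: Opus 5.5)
The plan is to use Proposition \ref{PBW-imply-IOE} for the algebra structure and the $\Delta_{H}$-domination hypothesis for the Hopf structure. Since $\Xi$ is finite, Lemma \ref{triangular-construct} supplies a positive map $\dd\colon\Xi\to\mathbb{N}$ (taking $\dd(\xi_{i})$ growing fast enough, e.g.\ $\dd(\xi_i)=N^{i}$) for which $\GG$ is both strongly $\dd$-triangular, hence $\dd$-triangular, and $(\Delta_{H},\dd)$-dominated. For $0\le i\le n$ we set
\[
H_{i}:=\Span\{z_{V}\mid V\in\PP(\{\xi_{1},\dots,\xi_{i}\},<)\},
\]
so that $H_{0}=\Bbbk$ and $H_{n}=H$. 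In the notation of Proposition \ref{PBW-imply-IOE} we have $H_{i}=H_{\xi_{i}}$ and $H_{i-1}=H_{\xi_{i}}^{o}$. That proposition then shows each $H_{i}$ is a subalgebra and $H_{i}=H_{i-1}[z_{\xi_{i}};\sigma_{i},\delta_{i}]$. So $H$ is an $n$-step IOE of $\Bbbk$ with the displayed presentation as algebras.

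It remains to show that every $H_{i}$ is a Hopf subalgebra.

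\emph{Subcoalgebra.} Since $\Delta_{H}$ is an algebra map and $H_{i}$ is generated as an algebra by $z_{\xi_{1}},\dots,z_{\xi_{i}}$, it suffices to check $\Delta_{H}(z_{\xi_{j}})\in H_{i}\otimes H_{i}$ for $j\le i$. By $\Delta_{H}$-domination, $\Delta_{H}(z_{\xi_j})$ is $1\otimes z_{\xi_j}+z_{\xi_j}\otimes1$ plus terms $z_P\otimes z_Q$ with $P,Q<_{\rr}\xi_j$. For a nondecreasing word, $P<_{\rr}\xi_j$ forces the last letter of $P$ to be $<\xi_j$, so every letter of $P$ is $<\xi_j$ and $z_P\in H_{j-1}$; likewise $z_Q\in H_{j-1}$. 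Hence $\Delta_{H}(H_{i})\subseteq H_{i}\otimes H_{i}$.

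\emph{Antipode.} Here I would avoid computing $S$ directly and instead use a general fact. Each $H_i$ is a subbialgebra of $H$ that is connected as a coalgebra, because the coradical of $H_i$ is contained in that of $H$. A connected bialgebra is a Hopf algebra, with antipode determined by the recursion on the coradical filtration, which is how Takeuchi's argument goes. Moreover, the antipode of a sub-bialgebra is forced by uniqueness to equal the restriction of $S_H$, so $S_H(H_i)\subseteq H_i$.

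Alternatively, one can verify $S_H(z_{\xi_j})\in H_j$ by induction on $j$. The counit gives $\varepsilon$ vanishing on the $z_P,z_Q$ terms, so $\varepsilon(z_{\xi_j})=0$. Applying $m(S\otimes\id)\Delta=\varepsilon$ then yields
\[
S(z_{\xi_j})=-z_{\xi_j}-\sum S(z_P)z_Q,
\]
and since $S$ is an antihomomorphism and $z_P$ is a product of generators $z_{\xi_k}$ with $k<j$, induction gives $S(z_P)\in H_{j-1}$. This shows $S(H_j)\subseteq H_j$ concretely.

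The only delicate point is the passage from $\Delta_{H}$-domination to $\Delta(H_{i})\subseteq H_{i}\otimes H_{i}$, namely checking that $<_{\rr}\xi_j$ on nondecreasing words means all letters precede $\xi_j$. That is immediate from the definition of $<_{\rr}$, so I expect no real obstacle; the substantive input is Proposition \ref{PBW-imply-IOE}, which needs the $\dd$ provided by Lemma \ref{triangular-construct}.
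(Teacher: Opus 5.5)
Your proposal is correct and follows the paper's proof. It uses the same chain $H_i$, Lemma \ref{triangular-construct} together with Proposition \ref{PBW-imply-IOE} for the Ore extension structure, and $\Delta_H$-domination to get $\Delta_H(z_{\xi_i})\in 1\otimes z_{\xi_i}+z_{\xi_i}\otimes 1+H_{i-1}\otimes H_{i-1}$. Your inductive check of $\varepsilon(z_{\xi_j})=0$ and $S_H(H_j)\subseteq H_j$ spells out what the paper compresses into ``it follows that each $H_i$ is a Hopf subalgebra''. Keep that version rather than the Takeuchi-style argument, which presupposes that $H$ is connected, and connectedness is not among the hypotheses.
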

	
	\begin{proof}
		For each $i=1,\ldots, n$, define $\Xi_{i}= \{\xi_{1},\ldots, \xi_{i}\}$ and
		\[
		H_{i} = \Span\{z_{V} \mid V\in \mathcal{P}(\Xi_{i}, {<}|_{\Xi_{i}})\}.
		\]
		Since $\GG$ is $\Delta_{H}$-dominated,
		\[
		\Delta_{H}(z_{\xi_{i}}) \in 1\otimes z_{\xi_{i}} + z_{\xi_{i}} \otimes 1 + H_{i-1} \otimes H_{i-1},\quad i=1,\ldots, n.
		\]
		It follows that each $H_{i}$ is a Hopf subalgebra of $H$.
		By Lemma \ref{triangular-construct} and Proposition \ref{PBW-imply-IOE},
		\[
		\Bbbk=:H_{0} \subset H_{1} \subset \cdots \subset H_{n} =H
		\]
		forms an ascending chain of subalgebras of $H$, and each $H_{i} = H_{i-1}[z_{\xi_{i}};\sigma_{i},\delta_{i}]$ is an Ore extension of $H_{i-1}$. Hence $H$ is an IHOE of $\Bbbk$ of the asserted form.
	\end{proof}
	
	By Proposition \ref{IHOE properties}(1), every IHOE of $\Bbbk$ is a connected Hopf algebra of finite GK-dimension. The converse does not hold. Standard counterexamples are the universal enveloping algebras of finite-dimensional simple Lie algebras of dimension at least $4$. More sophisticated noncocommutative counterexamples can be found in \cite{HW26}. Nevertheless, classification results from \cite{Zh13, WZZ15} imply that every connected Hopf algebra with GK-dimension strictly less than $5$ is an IHOE of $\Bbbk$. In her PhD thesis \cite{Hu26}, Hu extended this result to all connected Hopf algebras with GK-dimension strictly less than $7$, without classifying them. The theorem below provides a natural class of connected Hopf algebras, each of which is realized as an IHOE of $\Bbbk$.
	
	\begin{thm}(\cite[Theorem B]{ZSL20})\label{Graded-imply-IHOE}
		If $H=\bigoplus_{\gamma\in\Gamma} H_{\gamma}$ is a connected $\Gamma$-graded Hopf algebra of finite GK-dimension $n$, then $H$ is an $n$-step graded IHOE of $\Bbbk$.
	\end{thm}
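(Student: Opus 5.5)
The theorem is cited from \cite[Theorem B]{ZSL20}, but it also follows from the machinery already set up in this paper, and that derivation is what I would give. The plan is to produce a finite, triangular, $\Delta_H$-dominated PBW generating system for $H$ and then invoke Proposition \ref{PBW-imply-IHOE}.

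\textbf{Step 1: verify the coproduct hypothesis of Proposition \ref{Graded-imply-PBW}.} Let $H=\bigoplus_{\gamma}H_\gamma$ be connected graded, so $H_0=\Bbbk$. Since $\Delta$ is graded, for homogeneous $f$ of degree $\gamma$ we have $\Delta(f)\in\bigoplus_{\alpha+\beta=\gamma}H_\alpha\otimes H_\beta$. The components of degree $(0,\gamma)$ and $(\gamma,0)$ are $1\otimes a$ and $b\otimes 1$. The counit axioms, together with $\varepsilon(H_+)=0$ (which holds by gradedness of $\varepsilon$), force $a=b=f$ when $\gamma\neq 0$. Hence $\Delta(f)\in 1\otimes f+f\otimes 1+H_+\otimes H_+$.

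\textbf{Step 2: obtain a finite PBW system.} Proposition \ref{Graded-imply-PBW} now gives a homogeneous PBW generating system $\GG=(\{z_\xi\}_{\xi\in\Xi},<)$ that is $\dd$-triangular and $(\Delta,\dd)$-dominated, with $\dd(\xi)=\deg z_\xi$. By Lemma \ref{GKdimension}, $\#(\Xi)=\GKdim H=n$, so $\Xi$ is finite.

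\textbf{Step 3: pass from the $\dd$-versions to the plain conditions.} I need $\GG$ to be triangular and $\Delta$-dominated. Triangularity is immediate, because $\dd$-triangularity strengthens it by the extra constraint $\dd(W)\le \dd(\eta\xi)$. Domination is the point I expect to be the main obstacle. The $(\Delta,\dd)$-domination condition already contains $P,Q<_{\rr}\xi$ in its definition, so $(\Delta,\dd)$-dominated implies $\Delta$-dominated simply by dropping the degree constraint. So the obstacle dissolves once one reads the definitions carefully.

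\textbf{Step 4: conclude.} Proposition \ref{PBW-imply-IHOE} yields $H=\Bbbk[z_{\xi_1};\sigma_1,\delta_1]\cdots[z_{\xi_n};\sigma_n,\delta_n]$, an $n$-step IHOE. For gradedness, the intermediate algebras $H_i$ are spanned by ordered monomials in homogeneous generators, so they are graded Hopf subalgebras. The maps $\sigma_i,\delta_i$ are read off from $z_{\xi_i}f=\sigma_i(f)z_{\xi_i}+\delta_i(f)$, and uniqueness of this decomposition in the graded free module shows $\sigma_i$ preserves degree and $\delta_i$ raises degree by $\deg z_{\xi_i}$. Hence the extension is a graded IHOE.
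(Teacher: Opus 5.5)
Your proposal is correct and follows the paper's proof: Proposition \ref{Graded-imply-PBW} together with Lemma \ref{GKdimension} gives a finite homogeneous PBW generating system that is $\dd$-triangular and $(\Delta_H,\dd)$-dominated, and the argument of Proposition \ref{PBW-imply-IHOE} then produces the $n$-step graded IHOE. The differences are minor. You spell out two points the paper leaves implicit: the coproduct hypothesis for homogeneous elements of nonzero degree, and the homogeneity of $\sigma_i,\delta_i$, which follows from uniqueness of the Ore decomposition. You also pass through the plain triangular and $\Delta$-dominated conditions instead of working with the degree map $\dd$ directly, which is harmless because $\Xi$ is finite.
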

	
	\begin{proof}
		We revisit the proof of this theorem within the framework of the present paper. By Proposition \ref{Graded-imply-PBW}, $H$ admits a homogeneous PBW generating system $\GG=(\{z_{\xi}\}_{\xi\in \Xi}, <)$ that is $\dd$-triangular and $(\Delta_{H},\dd)$-dominated. Here $\dd\colon \Xi\to \Gamma$ denotes the positive map defined by $\dd(\xi) =\deg(z_{\xi})$. Lemma \ref{GKdimension} yields $n=\GKdim H$. Applying the graded variant of Proposition \ref{PBW-imply-IOE}, the reasoning from Proposition \ref{PBW-imply-IHOE} carries over to show that $H$ is an $n$-step graded IHOE of $\Bbbk$.
	\end{proof}
	
	\subsection{Hopf/coideal subalgebras of IHOEs}
	
	Within this subsection we investigate structural properties of Hopf/coideal subalgebras inside IHOEs. The following theorem constitutes one of the main results of this paper.
	
	\begin{thm}\label{Hopf-subalgebra-structure}
		Let $H$ be an IHOE of $\Bbbk$. Then every Hopf  (resp.\ left coideal, resp.\ right coideal) subalgebra of $H$ is an IOE of $\Bbbk$. More precisely, there exists a chain
		\[
		\Bbbk =A_{0} \subset A_{1} \subset \cdots \subset A_{m} =A
		\]
		of Hopf  (resp.\ left coideal, resp.\ right coideal) subalgebra of $H$ with $m=\operatorname{GKdim} A$, such that each $A_i$ is an Ore extension of $A_{i-1}$ for $1\le i\le m$.
	\end{thm}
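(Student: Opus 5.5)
We start from Proposition \ref{IHOE properties}(5): writing $H=\Bbbk[X_1;\sigma_1,\delta_1]\cdots[X_n;\sigma_n,\delta_n]$, the family $\GG=(\{z_i\}_{i\in\II_n},<)$ with $z_i=X_i-\varepsilon_H(X_i)$ is a finite, triangular and $\Delta_H$-dominated PBW generating system of $H$. By Lemma \ref{triangular-construct} we then choose a positive map $\dd\colon\II_n\to\mathbb{N}$ with $\dd(j)\ge N\dd(i)$ for $i<j$, for $N$ large. Such a $\dd$ is strictly increasing, hence order-preserving, and $\GG$ is $\dd$-triangular and $(\Delta_H,\dd)$-dominated. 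Let $A$ be a Hopf, left coideal or right coideal subalgebra; in every case $\Delta_H(A)\subseteq H\otimes A$ or $\Delta_H(A)\subseteq A\otimes H$. Set $\Xi'=\Xi(A,\GG,\dd)$ and choose a $\dd$-thin replacement $\hat\GG$ with $\hat z_\xi\in A$ for $\xi\in\Xi'$, as explained before Theorem \ref{PBW-sub-hereditary}. That theorem then makes $\hat\GG|_{\Xi'}$ a $\dd|_{\Xi'}$-triangular PBW generating system of $A$. Write $\Xi'=\{\xi_1<\cdots<\xi_m\}$.

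Next, apply Proposition \ref{PBW-imply-IOE} to $A$ and $\hat\GG|_{\Xi'}$. Put $A_i=\Span\{\hat z_V\mid V\in\PP(\{\xi_1,\dots,\xi_i\},<)\}$. Then each $A_i$ is a subalgebra and $A_i=A_{i-1}[\hat z_{\xi_i};\sigma_i,\delta_i]$, which gives the chain $\Bbbk=A_0\subset\cdots\subset A_m=A$. Proposition \ref{ring-homo-properties} (or Lemma \ref{GKdimension} via the filtration of Lemma \ref{filtration-PBW}) shows that $\GKdim A=\#(\Xi')=m$, since $\hat\GG|_{\Xi'}$ is triangular: $\dd$-triangularity with $\dd$ order-preserving and finite index set gives the weaker triangularity condition.

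The remaining, and main, point is that each $A_i$ has the same type as $A$. By Proposition \ref{proposition of replacement}(2), $\hat\GG$ is $(\Delta_H,\dd)$-dominated. So
\[
\Delta_H(\hat z_{\xi_i})\in 1\otimes\hat z_{\xi_i}+\hat z_{\xi_i}\otimes1+\Span\{\hat z_P\otimes\hat z_Q\mid P,Q\in\PP_+(\Xi,<),\ P,Q<_{\rr}\xi_i\},
\]
and all letters of $P$ and $Q$ are $<\xi_i$.

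Take the left coideal case $\Delta_H(A)\subseteq H\otimes A$. Since $\hat z_{\xi_i}\in A$, expand the right tensor factors in the basis $\{\hat z_Q\}$ of $H$. Every element of $A$ has $\hat\GG$-support in $\PP(\Xi',<)$, by the PBW property just proved together with linear independence from Proposition \ref{replacement facts}. Hence every $Q$ occurring with nonzero coefficient lies in $\PP(\Xi'\cap\{\eta<\xi_i\},<)$, so $\hat z_Q\in A_{i-1}$. Thus $\Delta_H(\hat z_{\xi_i})\in H\otimes A_i$. Because $\Delta_H$ is an algebra map and $A_i$ is generated by $\hat z_{\xi_1},\dots,\hat z_{\xi_i}$, we get $\Delta_H(A_i)\subseteq H\otimes A_i$. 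The right coideal case is symmetric.

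For Hopf subalgebras, both arguments apply, so $\Delta_H(A_i)\subseteq(H\otimes A_i)\cap(A_i\otimes H)=A_i\otimes A_i$. It remains to check $S(A_i)\subseteq A_i$. A connected bialgebra that is a subbialgebra of a connected Hopf algebra is automatically stable under the antipode, since the antipode is defined recursively via $S(x)=-x-\sum S(x_{(1)})x_{(2)}$ on the augmentation ideal using the coradical filtration. Alternatively, $S$ on $\hat z_{\xi_i}$ can be computed inductively from the displayed coproduct, since the middle terms lie in $A_{i-1}\otimes A_{i-1}$. This antipode check and the support argument are the only delicate points.
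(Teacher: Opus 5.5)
Your proposal is correct and follows essentially the same route as the paper: you take a thin replacement inside $A$, obtain a triangular PBW system of $A$ from Theorem \ref{PBW-sub-hereditary}, build the Ore tower with Proposition \ref{PBW-imply-IOE}, get $m=\GKdim A$ from Proposition \ref{ring-homo-properties}, and show that each $A_i$ has the same type as $A$ using the $(\Delta_H,\dd)$-domination of $\hat{\GG}$ together with the support argument. The only differences are that you write out the left coideal case where the paper writes out the right one, and that you make the antipode check in the Hopf case explicit (via the recursive formula on the generators $\hat z_{\xi_i}$), which the paper leaves to ``a similar argument.''
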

	
	\begin{proof}
		We may assume $H\neq \Bbbk$. By Proposition \ref{IHOE properties}(5), $H$ admits a triangular, $\Delta_{H}$-dominated PBW generating system $\GG=(\{z_{i}\}_{i\in \II_{n}}, <)$ for some $n\geq 1$, where $<$ denotes the natural total order on $\II_{n}$ defined by $1<\cdots < n$. Then by Lemma \ref{triangular-construct}, we may choose an order-preserving positive map $\dd\colon\II_{n}\to\mathbb{N}$ such that $\GG$ is $\dd$-triangular and $(\Delta_{H},\dd)$-dominated. Define
		\[
		\II_{n}'=\II_{n}(A,\GG,\dd):=\{\, i\in\II_{n}\mid (z_{i}+H[\GG,i,\dd]) \cap A \neq \emptyset\}.
		\]
		We can select a $\dd$-thin replacement $\hat{\GG}=(\{\hat{z}_{i}\}_{i\in \II_{n}}, <)$ of $\GG$ satisfying $\hat{z}_{i} \in A$ for all $i\in \II_{n}'$. Theorem \ref{PBW-sub-hereditary} then implies that
		\[
		\hat{\GG}|_{\II_{n}'}:= (\{\hat{z}_{i}\}_{i\in \II_{n}'}, {<}|_{\II_{n}'})
		\]
		forms a $\dd|_{\II_{n}'}$-triangular PBW generating system for $A$. Write $\II_{n}'=\{\ell_{1}, \cdots , \ell_{m}\}$ with $\ell_{1}< \cdots < \ell_{m}$. Proposition \ref{ring-homo-properties} yields $\GKdim A =m$. For each $i=1,\ldots, m$, set $\II_{n,i}'= \{\ell_{1},\ldots, \ell_{i}\}$ and
		\[
		A_{i}:= \Span\{\hat{z}_{V} \mid V\in \PP(\II_{n,i}', {<}|_{\II_{n,i}'}) \}.
		\]
		Observe that $A_{m}=A$. By Proposition \ref{PBW-imply-IOE},
		\[
		\Bbbk=:A_{0}\subset A_{1} \subset \cdots \subset A_{m} =A
		\]
		is an increasing chain of subalgebras of $A$, and $A_{i}$ is an Ore extension of $A_{i-1}$ for every $i=1,\ldots, m$.
		
		Now consider the case that $A$ is a right coideal subalgebra of $H$. Clearly,
		\[
		\supp_{\hat{\GG}}(\Delta_{H}(\hat{z}_{\ell_{i}})) \subseteq \PP(\II_{n}', {<}|_{\II_{n}'})\times \PP(\II_{n}, <), \quad i=1,\ldots, m.
		\]
		By Proposition \ref{proposition of replacement}, $\hat{\GG}$ is $(\Delta_{H},\dd)$-dominated. It follows that for each $i=1,\ldots, m$,
		\begin{eqnarray*}
			\Delta_{H}(\hat{z}_{\ell_{i}}) &\in& 1\otimes \hat{z}_{\ell_{i}} + \hat{z}_{\ell_{i}}\otimes 1 \\
			&& + \Span\{\hat{z}_{P}\otimes \hat{z}_{Q} \mid (P,Q)\in \PP_{+}(\II_{n}', {<}|_{\II_{n}'})\times \PP_{+}(\II_{n}, <),\ P,Q <_{\rr} \ell_{i}  \} \\
			&\subseteq &A_{i}\otimes H,
		\end{eqnarray*}
		which shows that $A_{i}$ is a right coideal subalgebra of $H$.
		
		For the case that $A$ is a left coideal subalgebra (resp.\ Hopf subalgebra) of $H$, a similar argument proves that each $A_{i}$ inherits the corresponding property.
	\end{proof}
	
	\begin{remark}\label{example-order-preserving}
		In the proof of Theorem \ref{Hopf-subalgebra-structure}, we require an order-preserving positive map $\dd\colon\II_{n}\to\mathbb{N}$ such that $\GG=(\{z_{i}\}_{i\in \II_{n}}, <)$ is $\dd$-triangular and $(\Delta_{H},\dd)$-dominated. Lemma \ref{triangular-construct} guarantees the existence of such a map. A natural candidate is the positive map $\dd_{H}\colon\Xi \to \mathbb{N}$ defined by
		\[
		\dd_{H}(i)=\min\{\, m \geq 1 \mid z_{i}\in H_{(m)}\,\}.
		\]
		Frequently, $\GG$ is $\dd_{H}$-triangular and $(\Delta_{H},\dd_{H})$-dominated. Nevertheless, $\dd_{H}$ need not be order-preserving. An explicit example is given by the umbrella Hopf algebra $H=\mathrm{UM}(2,2)$ constructed in \cite{HW26}.
		
		We follow the notation from \cite{HW26}. The Hopf algebra $\mathrm{UM}(2,2)$ is generated by eight elements
		\[
		x_{0},\,x_{1},\,x_{2},\,y_{1},\,y_{2},\,
		e_{11}-e_{22},\,e_{12},\,e_{21},
		\]
		subject to the relations
		\begin{align*}
			& [x_{0},-]=0,  \\
			& [x_{1},x_{2}]=x_{0},    \quad 		[y_{1},y_{2}]=\tfrac13 x_{0}^{3},\\
			& [x_{i},y_{j}]=0, && i,j=1,2, \\
			& [x_{i},e_{11}-e_{22}]=(\delta_{i1}-\delta_{i2})x_{i}, \quad 
			[x_{i},e_{12}]=\delta_{i1}x_{2}, \quad
			[x_{i},e_{21}]=\delta_{i2}x_{1}, && i=1,2, \\
			& [y_{i},e_{11}-e_{22}]=(\delta_{i1}-\delta_{i2})y_{i}, \quad 
			[y_{i},e_{12}]=\delta_{i1}y_{2}, \quad 
			[y_{i},e_{21}]=\delta_{i2}y_{1}, && i=1,2, \\
			& [e_{11}-e_{22},e_{12}]=2e_{12}, \quad 
			[e_{11}-e_{22},e_{21}]=-2e_{21}, \quad 
			[e_{12},e_{21}]=e_{11}-e_{22}.
		\end{align*}
		Here, $\delta_{ij}$ denotes the Kronecker symbol. The comultiplication is given by
		\begin{align*}
			\Delta_{H}(x_{i})&=x_{i}\otimes1+1\otimes x_{i}, &&  i=0,1,2,\\
			\Delta_{H}(y_{i})&=y_{i}\otimes1+1\otimes y_{i}+x_{0}\otimes x_{i}-x_{i}\otimes x_{0}, &&  i=1,2, \\
			\Delta_{H}(M)&=M\otimes1+1\otimes M, &&  M = e_{11}-e_{22},\, e_{12},\, e_{21},
		\end{align*}
		and the counit $\varepsilon_{H}$ maps each generator to $0$.
		It turns out that $H=\mathrm{UM}(2,2)$ is a connected Hopf algebra of GK-dimension $8$, whose primitive space equals
		\[
		P(H)=\Span\{ x_{0}, x_{1}, x_{2}, e_{11}-e_{22}, e_{12}, e_{21}  \}.
		\]
		As observed in \cite[Remark 4.6]{HW26}, $H$ is an IHOE of $\Bbbk$ of the form
		\[
		H= \Bbbk[x_0][x_1][x_2;\delta_1][y_1;\delta_2][y_2;\delta_3]
		[e_{11}-e_{22};\delta_4][e_{12};\delta_5][e_{21};\delta_6].
		\]
		Let $\GG=(\{z_{i}\}_{i\in \II_{8}}, <)$ be the associated PBW generating system of $H$ furnished by Proposition \ref{IHOE properties} (5). One readily checks that $\GG$ is $\dd_{H}$-triangular and $(\Delta_{H}, \dd_{H})$-dominated. However, $\dd_{H}(i)= 1$ for all $i\neq 4,5$, while $\dd_{H}(4) = \dd_{H}(5)=2$. Consequently, $\dd_{H}$ fails to be order-preserving.
	\end{remark}
	
	Combining Theorem \ref{Graded-imply-IHOE} and Theorem \ref{Hopf-subalgebra-structure}, we immediately obtain the following corollary.
	
	\begin{corollary}\label{coideal-structure-homo}
		Let $H$ be a connected $\Gamma$-graded Hopf algebra of finite GK-dimension. Let $A$ be a Hopf (resp.\ left coideal, resp.\ right coideal) subalgebra of $H$. Then there exists a chain
		\[
		\Bbbk =A_{0} \subset A_{1} \subset \cdots \subset A_{m} =A
		\]
		of Hopf (resp.\ left coideal, resp.\ right coideal) subalgebras of $H$
		such that $m=\GKdim A$, and $A_{i}$ is an Ore extension of $A_{i-1}$ for every $i=1,\ldots, m$.  \hfill $\Box$
	\end{corollary}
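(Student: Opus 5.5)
The corollary follows by composing Theorem \ref{Graded-imply-IHOE} with Theorem \ref{Hopf-subalgebra-structure}; the only work is checking that the hypotheses of the first theorem are met.

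Let $H=\bigoplus_{\gamma\in\Gamma}H_\gamma$ be connected $\Gamma$-graded of finite GK-dimension $n$. By Theorem \ref{Graded-imply-IHOE}, $H$ is an $n$-step graded IHOE of $\Bbbk$. In particular, $H$ is an IHOE of $\Bbbk$ in the sense of Definition \ref{Hopf Ore extension}: forgetting the grading, the chain of graded Hopf subalgebras is a chain of Hopf subalgebras, and each step is an Ore extension of underlying algebras.

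Now let $A\subseteq H$ be a Hopf, left coideal or right coideal subalgebra. Being such a subalgebra is a property relative to the Hopf structure of $H$ alone and does not involve the grading, so no homogeneity of $A$ is needed. Theorem \ref{Hopf-subalgebra-structure} applies directly. It produces a chain $\Bbbk=A_0\subset A_1\subset\cdots\subset A_m=A$ of subalgebras of the same type, with $m=\GKdim A$ and each $A_i$ an Ore extension of $A_{i-1}$.

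The one point to check is that the grading monoid $\Gamma$ in the corollary is the same kind of monoid as in Theorem \ref{Graded-imply-IHOE}, namely a nontrivial well-ordered free abelian monoid, as fixed in the standing conventions. Granting that convention, there is no real obstacle.
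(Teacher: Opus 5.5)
Your proposal is correct and matches the paper, which obtains the corollary by combining Theorem \ref{Graded-imply-IHOE} (a connected $\Gamma$-graded Hopf algebra of finite GK-dimension is an IHOE of $\Bbbk$) with Theorem \ref{Hopf-subalgebra-structure}. Your observation that no homogeneity of $A$ is required is also the point the paper stresses right after the corollary.
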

	
	It is worth noting that Corollary \ref{coideal-structure-homo} imposes no homogeneity assumption on $A$ inside $H$. For  homogeneous Hopf subalgebras and  homogeneous left/right coideal subalgebras, Li and the second-named author established stronger relative statements in \cite{LZ23,Zh24}, reproduced below.
	
	\begin{thm}(\cite[Theorem B]{LZ23}, \cite[Theorem B]{Zh24}) \label{coideal-structure-homo-relative}
		Let $H$ be a connected $\Gamma$-graded Hopf algebra. Let $B\subseteq A$ be homogeneous Hopf (resp.\ left coideal, resp.\ right coideal) subalgebras of $H$ with finite GK-dimensions. Then there exists a chain
		\[
		B =A_{0} \subset A_{1} \subset \cdots \subset A_{m} =A
		\]
		of homogeneous Hopf (resp.\ left coideal, resp.\ right coideal) subalgebras of $H$
		such that $m=\GKdim A-\GKdim B$ and $A_{i}$ is a graded Ore extension of $A_{i-1}$ for every $i=1,\ldots, m$. \hfill $\Box$
	\end{thm}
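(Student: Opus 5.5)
The plan is to run the machinery of Section \ref{sec:thin-replacement} in the graded setting, which removes the need for an order-preserving weight map. Then I would obtain the relative chain by inserting the missing generators one at a time, by induction on $\GKdim A-\GKdim B$.

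First, by Proposition \ref{Graded-imply-PBW}, $H$ carries a homogeneous PBW generating system $\GG=(\{z_\xi\}_{\xi\in\Xi},<)$ that is $\dd$-triangular and $(\Delta_H,\dd)$-dominated, with $\dd(\xi)=\deg z_\xi$. This $\dd$ need not be order-preserving. However, the only place order-preservation is used is the length-one step of Lemma \ref{index-set-induced}, where one needs $W<_{\rr,\dd}\xi\Rightarrow \dd(W)\le\dd(\xi)$. For a homogeneous subalgebra $A$ one may project any element of $A$ onto its component of degree $\dd(V)$. The leading term $z_V$ survives, and every remaining term then has degree exactly $\dd(V)$. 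Hence Lemma \ref{index-set-induced} and Theorem \ref{PBW-sub-hereditary} hold verbatim for homogeneous one-sided coideal subalgebras, with all thin replacements chosen homogeneous. This yields homogeneous triangular PBW systems $\hat\GG|_{\Xi(A)}$ for $A$, and $\GKdim A=\#\Xi(A)$ by Lemma \ref{GKdimension}. Since $B\subseteq A$, the definition \eqref{sub-index-set} gives $\Xi(B)\subseteq\Xi(A)$. Moreover, one thin replacement can serve both: take $\hat z_\xi\in B$ for $\xi\in\Xi(B)$ and $\hat z_\xi\in A$ for $\xi\in\Xi(A)$, all homogeneous. Consequently $m=\#(\Xi(A)\setminus\Xi(B))=\GKdim A-\GKdim B$ is finite.

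For the chain, I would induct on $m$. Among the indices in $\Xi(A)\setminus\Xi(B)$, choose $\xi$ minimal for the order ``first by $\dd$, then by $<$''. Set $A_1$ to be the subalgebra generated by $B$ and $\hat z_\xi$. Three things must be checked.
\begin{enumerate}
\item $A_1$ is a homogeneous coideal subalgebra of the same type. I would apply $(\Delta_H,\dd)$-domination of $\hat\GG$ (Proposition \ref{proposition of replacement}) together with the minimality of $\xi$. The correction terms of $\Delta_H(\hat z_\xi)$ lie in strictly lower degree or lower order, and, after projecting with the maps $\phi_U$ as in Corollary \ref{core facts}, they are forced to lie in $B\otimes H$ (resp.\ $H\otimes B$).
\item $A_1$ has PBW system $\hat\GG|_{\Xi(B)\cup\{\xi\}}$.
\item $A_1=B[\hat z_\xi;\sigma,\delta]$ as a graded Ore extension.
\end{enumerate}
Once $A_1$ is known to be a homogeneous coideal subalgebra of the right type, the restricted system is PBW for it by Theorem \ref{PBW-sub-hereditary}, and $\Xi(A_1)=\Xi(B)\cup\{\xi\}$ follows from minimality. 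The inductive hypothesis applied to $A_1\subseteq A$ then finishes the chain.

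The main obstacle is (3): showing that $\hat z_\xi b-\sigma(b)\hat z_\xi\in B$ for all $b\in B$, with $\sigma$ an automorphism of $B$. Here $\Xi(B)$ need not be an initial segment of $\Xi(A)$, so Proposition \ref{PBW-imply-IOE} does not apply directly. Reordering via Proposition \ref{change-of-order} would preserve PBW-ness but destroy triangularity. My first attempt would use the coideal property as a substitute for triangularity. Take $b\in B$ homogeneous and expand $\hat z_\xi b$ in the basis $\hat z_{V}\hat z_\xi^k$ with $V\in\PP(\Xi(B))$, which is available by item (2). Then apply $\phi_{U}\otimes\id$ (resp.\ $\id\otimes\phi_U$) to $\Delta_H$ of the expansion, and use downward induction on the leading word under $<_{\rr,\dd}$. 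The aim is to show that only $k\le1$ occurs, that the $k=1$ coefficient is $\sigma(b)\in B$ with leading coefficient nonzero (so that $\sigma$ is bijective, by the lower-triangularity argument in Proposition \ref{PBW-imply-IOE}), and that the $k=0$ part lies in $A_1\cap\Span\{\hat z_V\mid V\in\PP(\Xi(B))\}=B$.
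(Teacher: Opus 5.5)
The paper gives no proof of this statement; it quotes it from \cite[Theorem B]{LZ23} and \cite[Theorem B]{Zh24}. Your preliminary reduction is sound. For homogeneous $A$, projecting onto the degree-$\dd(V)$ component does replace order-preservation in the length-one step of Lemma~\ref{index-set-induced}. So a single homogeneous thin replacement gives PBW systems $\hat\GG|_{\Xi(B)}$ and $\hat\GG|_{\Xi(A)}$ with $m=\#(\Xi(A)\setminus\Xi(B))$. Item (1) also holds: the left tensor factors of the correction terms of $\Delta_H(\hat z_\xi)$ have letters in $\Xi(A)$ of degree $<\dd(\xi)$, hence in $\Xi(B)$.

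The gap is your assertion that $\Xi(A_1)=\Xi(B)\cup\{\xi\}$ follows from minimality. It is false, and items (2) and (3) fail with it. Take $H=U(\mathfrak{h})$ with $[x,y]=z$ central, $\deg x=\deg y=1$, $\deg z=2$, and take $B=\Bbbk[x]\subset A=H$.
\begin{enumerate}
\item Since $H_1=\Bbbk x\oplus\Bbbk y$, every homogeneous PBW system has exactly two indices of degree $1$.
\item $\Xi(B)$ is one of them, with $\hat z\in\Bbbk^{\times}x$, and your rule selects the other one, $\eta$.
\item Then $\hat z_\eta$ and $x$ span $H_1$, so $A_1=H$. This has GK-dimension $\GKdim B+2$ and is not an Ore extension of $B$.
\end{enumerate}
The correct first step is $\Bbbk[x]\subset\Bbbk[x,z]$, i.e.\ one must adjoin the degree-$2$ generator. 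Ordering by $<$ first does not help either. For the $\dd$-triangular PBW system ordered by $x<z<y$ and $B=\Bbbk[y]$, the $<$-minimal element of $\Xi(A)\setminus\Xi(B)=\{x,z\}$ is $x$, and again $\langle y,x\rangle=H$.

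The reason is structural. Minimality in degree only controls the coproduct, whose correction terms have lower degree. The Ore condition $\hat z_\xi b-\sigma(b)\hat z_\xi\in B$ concerns commutators, which raise degree and can produce PBW generators outside $\Xi(B)\cup\{\xi\}$ (here $[y,x]=-z$). So the downward induction you sketch for (3) cannot succeed.

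Switching to a maximal-degree rule does not rescue the argument. In the commutative Hopf algebra $\Bbbk[x,y,z]$ with $x,y$ primitive and $\Delta(z)=z\otimes1+1\otimes z+x\otimes y$, take $B=\Bbbk$. Adjoining the top-degree generator gives $\Bbbk[z]$, which is not a coideal subalgebra.

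What is missing is a genuinely new ingredient that makes the coideal condition and the normalizing condition hold simultaneously. This could be a selection principle for $\xi$ (in the first example, $z$ normalizes $\Bbbk x$ while $y$ does not). Alternatively, you could re-choose or re-order the PBW system so that $\Xi(B)$ becomes an initial segment while $\dd$-triangularity and $(\Delta_H,\dd)$-domination are kept, after which Proposition~\ref{PBW-imply-IOE} would produce the chain. Neither follows from the thin-replacement machinery of Section~\ref{sec:thin-replacement}, which is consistent with the paper importing the relative statement rather than deriving it.
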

	
	\begin{remark}
		In view of Theorem \ref{Hopf-subalgebra-structure} and Theorem \ref{coideal-structure-homo-relative}, we are naturally led to formulate the following question: if $H$ is an IHOE of $\Bbbk$, must $H$ be an IHOE of every Hopf subalgebra of itself? A related open problem, stated in \cite[Question C]{LZ23}, asks whether every finite GK-dimensional connected Hopf algebra $H$ is an IHOE of its primitive part $U(P(H))$, the universal enveloping algebra of $P(H)$. The umbrella Hopf algebra $H=\mathrm{UM}(2,2)$, constructed in \cite{HW26} and revisited in Remark \ref{example-order-preserving}, provides a counterexample to both questions. Specifically, $H$ is an IHOE of $\Bbbk$, yet not an IHOE of its primitive part; see \cite[Corollary 4.5]{HW26}. Moreover, $H$ has GK-dimension $8$, which is minimal among all such known counterexamples.
	\end{remark}
	
	We say that a Hopf subalgebra $K$ of a Hopf algebra $H$ is \emph{normal} if $K^{+}H = HK^{+}$. This definition differs from the classical one in \cite[Definition 3.4.1]{Mo93}, which requires
	\[
	\sum h_{(1)} k S(h_{(2)}) \in K
	\]
	for all $h \in H$, $k \in K$. When $H$ is connected, and more concretely when $H$ is an IHOE of $\Bbbk$, $H$ is free as a left (and right) $K$-module by \cite[Corollary 3.4]{LZ23} or \cite[Theorem 4]{RA77}. Under this freeness assumption, the two definitions are equivalent by \cite[Lemma 3.4.2, Proposition 3.4.3]{Mo93}.
	
	The next example illustrates that a Hopf subalgebra of an IHOE of $\Bbbk$ need not be normal.
	
	\begin{example}
		Let $H=B(\lambda)$ denote the connected Hopf algebra of GK-dimension $3$ constructed in \cite{Zh13}. It is generated by $X,Y,Z$ subject to the relations
		\[
		[X,Y]=Y,\qquad [Z,X]=-Z+\lambda Y,\qquad [Z,Y]=\tfrac12 Y^{2}.
		\]
		The algebra admits an IHOE presentation
		\[
		B(\lambda)=\Bbbk[Y][X;\delta_{2}][Z;\sigma_{3},\delta_{3}],
		\]
		where $\delta_{2}(Y)=Y$, $\sigma_{3}(Y)=Y$, $\sigma_{3}(X)=X-1$, $\delta_{3}(Y)=\tfrac12 Y^{2}$, $\delta_{3}(X)=\lambda Y$.
		Set $A = \Bbbk[X]$. Since $X$ is primitive, $A$ is a Hopf subalgebra of $H$, and $A$ itself is an IHOE.
		
		We claim $HA_{+}\neq A_{+}H$, which implies $A$ is not normal in $H$. Suppose for contradiction that $HA_{+}=A_{+}H$.
		Take $X\in A_{+}$ and $Y\in H$. From the commutator relation $[X,Y]=Y$, we obtain $Y=XY-YX\in A_{+}H$. Hence $Y = Xf$ for some $f\in H$. It is known that $\{X^{i}Y^{j}Z^{k} \mid i,j,k\geq 0\}$ forms a basis of $B(\lambda)$. Write $f=\sum_{i,j,k\geq 0} a_{i,j,k} X^{i}Y^{j}Z^{k}$. Then
		\[
		Y=  X\biggl(\sum_{i,j,k\geq 0} a_{i,j,k} X^{i}Y^{j}Z^{k}\biggr)
		=\sum_{i,j,k\geq 0} a_{i,j,k} X^{i+1}Y^{j}Z^{k}.
		\]
		This contradicts the linear independence of the monomial basis $\{X^{i}Y^{j}Z^{k}\mid i,j,k\ge 0\}$.
	\end{example}
	
	\subsection{Quotients of IHOEs}
	
	This subsection investigates the stability of the IHOE property under Hopf quotients. The following theorem is another main result of this paper.
	
	\begin{thm}\label{Quotients-IHOE}
		Let $H$ be an IHOE of $\Bbbk$. For any Hopf ideal $\mathfrak{a}\subset H$, the quotient Hopf algebra $H/\mathfrak{a}$ is also an IHOE of $\Bbbk$.
	\end{thm}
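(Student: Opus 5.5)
We follow the outline sketched in the introduction. Let $\pi\colon H\to \overline{H}:=H/\mathfrak{a}$ be the quotient map. By Masuoka's correspondence \cite{Ma91}, which applies since $H$ is connected and hence faithfully coflat over every quotient coalgebra, the space $A:=H^{\operatorname{co}\overline{H}}=\{h\in H\mid (\id\otimes\pi)\Delta_H(h)=h\otimes 1\}$ is a left coideal subalgebra with $\mathfrak{a}=A^{+}H$. As in the proof of Theorem \ref{Hopf-subalgebra-structure}, take the triangular, $\Delta_H$-dominated system $\GG=(\{z_i\}_{i\in\II_n},<)$ from Proposition \ref{IHOE properties}(5). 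Choose an order-preserving $\dd$ via Lemma \ref{triangular-construct}, and a $\dd$-thin replacement $\hat\GG$ with $\hat z_i\in A$ for $i\in\II_n'=\II_n(A,\GG,\dd)$. Theorem \ref{PBW-sub-hereditary} makes $\hat\GG|_{\II_n'}$ a PBW system of $A$, and Proposition \ref{proposition of replacement} makes $\hat\GG$ $\dd$-triangular and $(\Delta_H,\dd)$-dominated. Set $\II_n''=\II_n\setminus\II_n'$ and $\overline z_i=\pi(\hat z_i)$ for $i\in\II_n''$.

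The key step is to show that $\overline{\GG}=(\{\overline z_i\}_{i\in\II_n''},<)$ is a PBW generating system of $\overline H$. I would first change the order on $\II_n$ to a total order $\triangleleft$ in which every element of $\II_n'$ precedes every element of $\II_n''$, keeping $<$ within each block. By Proposition \ref{change-of-order}, $(\{\hat z_i\},\triangleleft)$ is still a PBW system of $H$. Its ordered monomials factor as $\hat z_{U}\hat z_{W}$ with $U\in\PP(\II_n')$ and $W\in\PP(\II_n'')$. Since $\{\hat z_U\}$ is a basis of $A$, this gives a decomposition $H=A\otimes B$ as a left $A$-module, where $B=\Span\{\hat z_W\mid W\in \PP(\II_n'',<)\}$. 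Hence $H=B\oplus A^{+}B$ and $\mathfrak a=A^{+}H=A^{+}B$. The identity $A^{+}H=A^{+}(A\otimes B)=A^{+}B$ holds because $A^{+}A=A^{+}$. Consequently $\pi|_B$ is a linear isomorphism, and the ordered monomials $\overline z_W$ with $W\in\PP(\II_n'',<)$ form a basis of $\overline H$.

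It remains to transfer the structure. For triangularity, take $\eta<\xi$ in $\II_n''$ and apply $\pi$ to the $\dd$-triangular relation for $\hat z_\xi\hat z_\eta$. Each term $\hat z_V$ with $V$ involving a letter of $\II_n'$ must be rewritten in the basis $\overline z_W$. I would do this by reordering $V$ via the $\triangleleft$-version of Lemma \ref{rearrangement}, which moves $\II_n'$-letters to the left. Terms whose $A$-part is a nonempty product of $\hat z_i$ with $i\in\II_n'$ lie in $A^{+}$, up to subtracting the counit value, and hence die modulo $\mathfrak a$. The surviving terms keep the bounds $\dd(W)\le\dd(\eta\xi)$ and $W<_{\rr,\lex}\eta\xi$ after deleting letters. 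The main obstacle is that the $\triangleleft$-rearrangement need not respect $<_{\rr,\lex}$. I would handle it by an induction on $<_{\rr,\dd}$ as in Lemma \ref{rearrangement}, using that each correction term has smaller $\dd$ or is smaller under $<_{\rr,\lex}$. Failing that, I would use Remark \ref{right-lex-monoid} together with the finiteness of $\II_n$ to reduce to $\dd_{\Xi}$-orders, where deleting letters visibly decreases words. Domination is handled the same way: apply $\pi\otimes\pi$ to the $(\Delta_H,\dd)$-domination of $\hat z_\xi$ and kill tensor factors in $A^{+}$. Finally, $\overline{\GG}$ is a finite, triangular and $\Delta_{\overline H}$-dominated PBW system, so Proposition \ref{PBW-imply-IHOE} shows that $\overline H$ is an IHOE of $\Bbbk$.
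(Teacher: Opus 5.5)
Your overall architecture matches the paper's: Masuoka's correspondence, a $\dd$-thin replacement adapted to $A=H^{\operatorname{co}\overline H}$, the block order $\triangleleft$ with Proposition \ref{change-of-order}, and Proposition \ref{PBW-imply-IHOE} at the end. Your basis step is correct. Writing $H\cong A\otimes B$, hence $H=B\oplus A^{+}B$ and $\mathfrak a=A^{+}B$, is a clean version of the paper's computation.

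\textbf{The gap.} The transfer of triangularity and domination to $\overline{\GG}$ is not established as written.
\begin{itemize}
\item You invoke a ``$\triangleleft$-version of Lemma \ref{rearrangement}'', but nothing makes $(\{\hat z_i\},\triangleleft)$ $\dd$-triangular. Proposition \ref{change-of-order} only transports the PBW property.
\item Concretely, take $\xi<\eta$ with $\xi\in\II_n''$ and $\eta\in\II_n'$. The correction terms in the relation for $\hat z_\eta\hat z_\xi$ are $<$-ordered words, compared via $<_{\rr,\lex}$ built from $<$, not $\triangleleft$-ordered ones.
\item Your two fallback arguments are only sketched, so this step is open.
\end{itemize}

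\textbf{The missing observation.} It makes the obstacle disappear. Every $\hat z_i$ lies in $H^{+}$, because $z_i=X_i-\varepsilon_H(X_i)$ and the thin perturbation only adds $z_W$ with $W$ nonempty. Hence for $i\in\II_n'$ you get $\hat z_i\in A\cap H^{+}=A^{+}\subseteq\mathfrak a$, i.e.\ $\pi(\hat z_i)=0$. Since $\mathfrak a$ is a two-sided ideal and $\pi$ is an algebra map, $\pi(\hat z_V)=0$ for every word $V$, in any order, containing a letter of $\II_n'$.

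So applying $\pi$ (resp.\ $\pi\otimes\pi$) to the $\dd$-triangular (resp.\ $(\Delta_H,\dd)$-dominated) relations of $\hat{\GG}$ simply deletes the terms whose words meet $\II_n'$. The surviving terms are already $\overline z_W$ (resp.\ $\overline z_P\otimes\overline z_Q$) with words in $\PP_{+}(\II_n'',<)$. They satisfy the original bounds $W<_{\rr,\lex}\eta\xi$ (resp.\ $P,Q<_{\rr}\xi$), so no reordering or deletion of letters is needed. This is exactly how the paper argues.

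Your hedge ``up to subtracting the counit value'' is precisely the point that must be pinned down. If $\varepsilon(\hat z_i)$ were nonzero, then $\pi(\hat z_i)$ would be a nonzero scalar. Those terms would not die but leave scalar multiples of shorter words, possibly of the empty word, and the bookkeeping would genuinely change.
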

	
	\begin{proof}
		We may assume $H\neq \Bbbk$. For convenience, write $\Delta$ and $\overline{\Delta}$ for the coproducts of $H$ and $\overline{H}:=H/\mathfrak{a}$, respectively. Consider $H$ as a right $\overline{H}$-comodule via the canonical map $H\to \overline{H}$. By the well-known Masuoka's theorem, the  coinvariant  subspace $A:=H^{\operatorname{co} \overline{H}}$  is a left coideal subalgebra of $H$ and $\mathfrak{a}=A^{+}H$; see \cite{Ma91} or \cite[Theorem 6.3.2]{HS20} for references.
		
		By Proposition \ref{IHOE properties}(5), $H$ admits a triangular, $\Delta_H$-dominated PBW generating system $\GG=(\{z_i\}_{i\in \II_{n}}, <)$ with $z_i\in H^+$ for all $i$ and some integer $n\ge 1$, where $<$ denotes the canonical total order on $\II_n$ specified by $1<\cdots <n$. By Lemma \ref{triangular-construct}, we may choose an order-preserving positive map $\dd\colon\II_{n}\to\mathbb{N}$ such that $\GG$ is $\dd$-triangular and $(\Delta_{H},\dd)$-dominated. Define
		\[
		\II_{n}'=\II_{n}(A,\GG,\mathfrak{d}):=\{\, i\in\II_{n}\mid (z_{i}+H[\GG,i,\mathfrak{d}]) \cap A \neq \emptyset\},
		\]
		and choose a $\mathfrak{d}$-thin replacement $\hat{\GG}=(\{\hat{z}_{i}\}_{i\in \II_{n}}, <)$ of $\GG$ such that $\hat{z}_{i} \in A$ for all $i\in \II_{n}'$.  By Proposition \ref{proposition of replacement}, $\hat{\GG}$ remains $\mathfrak{d}$-triangular and $(\Delta, \mathfrak{d})$-dominated, and its restriction
		\[
		\hat{\GG}|_{\II_{n}'}:= \bigl(\{\hat{z}_{i}\}_{i\in \II_{n}'},\, {<}|_{\II_{n}'}\bigr)
		\]
		constitutes a PBW generating system for $A$ thanks to Theorem \ref{PBW-sub-hereditary}. Let $$\II_{n}''=\II_{n}\setminus\II_{n}'$$ and set
		\[
		\overline{z}_{i} = \hat{z}_{i} + \mathfrak{a} \in \overline{H}, \quad i=1,\ldots, n.
		\]
		We claim that $\overline{\GG}=(\{\overline{z}_{i}\}_{i\in \II_{n}''},{<}|_{\II_{n}''})$
		is a PBW generating system of $\overline{H}$ which is triangular and $\overline{\Delta}$-dominated. Once this claim is verified, the assertion follows directly from Proposition \ref{PBW-imply-IHOE}.
		
		Note that necessarily $\hat{z}_{i} \in H^{+}$ for every $i\in \II_{n}$, so $\hat{z}_{i} \in A^{+}$ whenever $i\in \II_{n}'$. Thus $\overline{z}_{V}=0$ for any word $V$ containing an index from $\II_{n}'$. Hence
		\begin{eqnarray*}
			\overline{z}_{j}\overline{z}_{i} &\in& \Bbbk^{\times}\overline{z}_{i} \overline{z}_{j} + \Span\{\overline{z}_{V}\mid V\in \PP_{+}(\II_{n},<), ~  V<_{\rr,\lex} ij \} \\
			& = & \Bbbk^{\times} \overline{z}_{i} \overline{z}_{j} + \Span\{\overline{z}_{V}\mid V\in \PP_{+}(\II_{n}'',{<}|_{\II_{n}''}),  ~ V<_{\rr,\lex} ij \}
		\end{eqnarray*}
		for all $i,j \in \II_{n}''$ with $i< j$, and
		\begin{eqnarray*}
			\overline{\Delta}(\overline{z}_{i}) &\in&  1\otimes \overline{z}_{i} + \overline{z}_{i} \otimes 1 + \Span\{\overline{z}_{P}\otimes \overline{z}_{Q}\mid P,Q\in \PP_{+}(\II_{n}, <), ~ P, Q <_{\rr} i\} \\
			&=& 1\otimes \overline{z}_{i} + \overline{z}_{i} \otimes 1 + \Span\{\overline{z}_{P}\otimes \overline{z}_{Q}\mid P,Q\in \PP_{+}(\II_{n}'',{<}|_{\II_{n}''}), ~ P, Q <_{\rr} i\}
		\end{eqnarray*}
		for every $i\in \II_{n}''$.
		Consequently, the system $\overline{\GG}$ is triangular and $\overline{\Delta}$-dominated.
		
		It remains to prove that $\{\overline{z}_{V}\}_{V\in \PP(\II_{n}'',{<}|_{\II_{n}''})}$ forms a basis of $\overline{H}$. The spanning property is clear. To establish linear independence, we introduce a total order $\triangleleft$ on $\II_{n}$ defined as follows: for $i,j\in \II_{n}$,
		\begin{eqnarray*}
			i\triangleleft j
			& \Longleftrightarrow &
			\left\{
			\begin{aligned}
				& i\in \II_{n}' \text{ and } j\in \II_{n}'', \quad \text{or} \\
				& i < j, \quad \text{where either } i, j\in \II_{n}' \text{ or } i, j\in \II_{n}''.
			\end{aligned}\right .
		\end{eqnarray*}
		By Proposition \ref{change-of-order}, $\{\hat{z}_{V}\}_{V\in \PP(\II_{n}, \triangleleft)}$ is a basis of $H$. Take arbitrary $P\in \PP_{+}(\II_{n}', {<}|_{\II_{n}'})$ and  $Q\in \PP(\II_{n}, \triangleleft)$. We decompose $Q=Q_{1}Q_{2}$ with $Q_{1}\in \PP(\II_{n}',{<}|_{\II_{n}'}) $ and $Q_{2} \in \PP(\II_{n}'',{<}|_{\II_{n}''})$. By Lemma \ref{rearrangement},
		\begin{eqnarray*}
			\hat{z}_{P}\hat{z}_{Q} &=& \hat{z}_{PQ_{1}} \hat{z}_{Q_{2}} \\
			&\in & \bigl( \Span\big\{\hat{z}_{W}  \,\big|\, W \in \PP_{+}(\II_{n}', {<}|_{\II_{n}'}), ~  W\leq _{\rr,\lex} \Pi(P Q_{1}) \big\} \bigr) \cdot \hat{z}_{Q_{2}}  \\
			&\subseteq & \Span\{\hat{z}_{V} \mid V\in \PP_{+}(\II_{n}, \triangleleft) \setminus \PP_{+}(\II_{n}'', {<}|_{\II_{n}''})\}.
		\end{eqnarray*}
		Consequently,
		\[
		\mathfrak{a}=A^{+}H =\Span\{\hat{z}_{V} \mid V\in \PP_{+}(\II_{n}, \triangleleft) \setminus \PP_{+}(\II_{n}'', {<}|_{\II_{n}''})\}.
		\]
		Linear independence of $\{\overline{z}_{V}\}_{V\in \PP(\II_{n}'',{<}|_{\II_{n}''})}$ follows immediately.
	\end{proof}

	\begin{remark}\label{exact-sequence-Hopf}
		Recall from \cite{AD95} that a sequence of Hopf algebras
		\begin{eqnarray*}\label{exact-sequence}
			\Bbbk \longrightarrow K \xrightarrow{\iota} H \xrightarrow{\pi} J \longrightarrow \Bbbk
		\end{eqnarray*}
		is called \emph{exact} if $\iota$ is injective, $\pi$ is surjective,
		\[
		\ker\pi = \iota(K^{+}) H \quad \text{and} \quad \im\iota = H^{\operatorname{co}J}.
		\]
		Since $\iota$ is injective, we may identify $K$ with a Hopf subalgebra of $H$. By \cite[Proposition 1.2.3]{AD95}, $K$ is normal in $H$ and $J\cong H/K^{+}H$ as Hopf algebras for any such exact sequence. It is clear that if $H$ is connected, then both $K$ and $J$ are connected as well. Combining Theorem \ref{Hopf-subalgebra-structure} and Theorem \ref{Quotients-IHOE}, we deduce that if $H$ is an IHOE of $\Bbbk$, then both $K$ and $J$ are too. On the other hand, Hu proved in her PhD thesis \cite{Hu26} that if $K$ and $J$ are connected, then so is $H$. This raises the natural question whether an analogous closure property holds for the class of IHOEs of $\Bbbk$.
	\end{remark}
	
	\subsection{Minimal PBW generating systems}
	
	In this subsection we introduce minimal PBW generating systems for Hopf algebras and relate them to the computation of coradical filtrations.
	
	\begin{definition}
		Let $H$ be a  Hopf algebra. An ordered family $\GG=(\{z_{\xi}\}_{\xi\in \Xi},<)$ of elements in $H$ is called \emph{$\dd$-minimal}, where $\dd\colon \Xi\to \mathbb{N}$ is a positive map, if
		\[
		\bigl( z_{\xi} + H_{+}[\GG, \xi,\dd] \bigr) \cap H_{(\dd(\xi)-1)} = \emptyset, \quad \xi \in \Xi.
		\]
	\end{definition}
	
	\begin{prop}\label{coradical filtration}
		Let $H$ be a Hopf algebra. Let $\GG=(\{z_{\xi}\}_{\xi\in \Xi},<)$ be a finite PBW generating system of $H$. Assume $\GG$ is $\dd_{H}$-minimal, $\dd_{H}$-triangular and $(\Delta_{H},\dd_{H})$-dominated, where $\dd_{H}\colon\Xi \to \mathbb{N}$ is the positive map defined by $\dd_{H}(\xi)=\min\{\, m \geq 1 \mid z_{\xi}\in H_{(m)}\,\}$. Then
		\[
		H_{(m)} =\Span\{z_{V} \mid V\in \mathcal{P}(\Xi,<), ~ \dd_{H}(V)\leq m\}, \quad m\geq 0.
		\]
	\end{prop}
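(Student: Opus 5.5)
Write $F_m:=\Span\{z_V\mid V\in\PP(\Xi,<),\ \dd_H(V)\le m\}$ (so $F_0=\Bbbk$, since $\dd_H$ is positive). The plan is to prove the two inclusions $F_m\subseteq H_{(m)}$ and $H_{(m)}\subseteq F_m$ separately.

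\emph{The inclusion $F_m\subseteq H_{(m)}$.} By definition of $\dd_H$, each generator satisfies $z_\xi\in H_{(\dd_H(\xi))}$. Since the coradical filtration of a connected Hopf algebra is an algebra filtration, $z_V=z_{\xi_1}\cdots z_{\xi_k}\in H_{(\dd_H(V))}$ for every word $V$. This gives the inclusion at once.

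\emph{The inclusion $H_{(m)}\subseteq F_m$.} I will argue by induction on $m$; the case $m=0$ holds because $H$ is connected, so $H_{(0)}=\Bbbk=F_0$. For the inductive step, recall that $H_{(m)}=\{f\mid \Delta(f)\in H_{(m-1)}\otimes H+H\otimes \Bbbk\}$, or equivalently (working with $H^+$) that $\Delta(f)-1\otimes f-f\otimes 1\in H_{(m-1)}\otimes H_{(m-1)}$ for $f\in H_{(m)}\cap H^+$.

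Take $0\neq f\in H_{(m)}$ and suppose $f\notin F_m$. Subtracting the part of $f$ lying in $F_m\subseteq H_{(m)}$, we may assume every word in $\supp_\GG(f)$ has $\dd_H$-weight $>m$. Let $V$ be the leading word of $f$ with respect to $<_{\rr,\dd_H}$; then $\dd_H(V)>m$. Two cases now arise.

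\emph{Case $V$ of length at least two.} Write $V=V_1\xi$ with both $V_1$ and $\xi$ nonempty. By Corollary \ref{core facts}, $(\phi_{V_1}\otimes\id)(\Delta f)$ has leading term $\binom{V}{V_1}z_\xi$ plus terms that are $<_{\rr,\dd_H}\xi$. The plan is to show that this element lies in $H_{(m-\dd_H(V_1))}$, using the fact that $\Delta(H_{(m)})\subseteq\sum_i H_{(i)}\otimes H_{(m-i)}$ together with the inductive hypothesis describing the lower layers. Once that is established, the inductive hypothesis at the lower level puts its leading word $\xi$ into weight $\le m-\dd_H(V_1)$. Hence $\dd_H(V)\le m$, a contradiction.

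\emph{Case $V=\xi$ a single letter.} Then $f\in a\,z_\xi+H_+[\GG,\xi,\dd_H]+\Bbbk$ with $a\ne0$. After removing the constant, $a^{-1}f\in (z_\xi+H_+[\GG,\xi,\dd_H])\cap H_{(m)}$. Since $m\le\dd_H(\xi)-1$, this contradicts $\dd_H$-minimality.

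\emph{Main obstacle.} The hard part will be the multi-letter case: controlling $\phi_{V_1}$ against the coradical filtration. The pairing $\phi_{V_1}$ does not obviously respect $\sum_i H_{(i)}\otimes H_{(m-i)}$. To handle this, I would first use the inductive hypothesis to know that $\{z_W\mid \dd_H(W)\le i\}$ is a basis of $H_{(i)}$ for all $i<m$. Then I would decompose $\Delta(f)=\sum_i(\text{terms in }F_i\otimes H)$ with respect to the PBW basis in the left factor. Since $\dd_H(V_1)\ge1$ and the $F_i$ are spanned by PBW words, the component with left factor $z_{V_1}$ lies in $F_{\dd_H(V_1)}\otimes H_{(m-\dd_H(V_1))}$ modulo $F_{\dd_H(V_1)-1}\otimes H$, which gives the needed membership. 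If that bookkeeping fails, my fallback is to pass to $\gr_c(H)$ and argue with the graded Hopf structure there.
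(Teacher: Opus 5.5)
Your induction on $m$ is a different route from the paper's, and the multi-letter case works. For $0<i<m$ the inductive hypothesis makes $H_{(i)}=F_i$ PBW-spanned, so $\phi_{V_1}$ annihilates it unless $\dd_H(V_1)\le i$. The $i=0$ summand dies because $V_1\neq\emptyset$, and the $i=m$ summand contributes only a scalar. The single-letter case, however, has a genuine gap.

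\textbf{The gap.} Knowing that $\xi$ is the $<_{\rr,\dd_H}$-leading word of $f$ only tells you that every other $W\in\supp_{\GG}(f)$ satisfies $W<_{\rr,\dd_H}\xi$. When $\dd_H(W)<\dd_H(\xi)$, this says nothing about $W<_{\rr,\lex}\xi$. Indeed, $W$ may contain a letter $\eta>\xi$ with $\dd_H(\eta)<\dd_H(\xi)$, and then $z_W\notin H_+[\GG,\xi,\dd_H]$. So $a^{-1}f$ need not lie in $z_\xi+H_+[\GG,\xi,\dd_H]$, and minimality cannot be invoked. The implication $W<_{\rr,\dd}\xi\Rightarrow z_W\in H[\GG,\xi,\dd]$ holds only for order-preserving $\dd$; this is exactly where order-preservation enters Lemma~\ref{index-set-induced}. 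Here $\dd_H$ is not assumed order-preserving: Remark~\ref{example-order-preserving} shows it can fail, which is why Corollary~\ref{IHOE-criterion} imposes it separately.

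\textbf{The repair.} Discard the lower-weight part, as the paper does with its lift $F$.
\begin{enumerate}
\item Put $D=\dd_H(\xi)$, so $D>m\ge 1$, and let $f_D$ be the weight-$D$ component of $f$. Its words other than $\xi$ have weight $D$ and are $<_{\rr}\xi$, so $a^{-1}f_D\in z_\xi+H_+[\GG,\xi,\dd_H]$.
\item Use the inductive input you already have: $\Delta(H_{(m)})\subseteq\sum_i H_{(i)}\otimes H_{(m-i)}$ with $H_{(i)}=F_i$ for $0<i<m$, together with $f\in H^+$ (since $\varepsilon(z_\xi)=0$ follows from domination and the counit). This gives $\Delta(f)-1\otimes f-f\otimes 1\in\Span\{z_P\otimes z_Q\mid P,Q\neq\emptyset,\ \dd_H(PQ)\le m\}$.
\item Proposition~\ref{comultiplication} gives the same statement for $f-f_D$ with bound $D-1$.
\item Subtracting, and using your first inclusion $F_{D-2}\subseteq H_{(D-2)}$, you get $\Delta(f_D)\in H_{(0)}\otimes H+H\otimes H_{(D-2)}$. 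Hence $f_D\in H_{(D-1)}$, contradicting minimality.
\end{enumerate}

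\textbf{Comparison with the paper.} With this fix your argument is a valid alternative. The paper shows $\gr_{\mathfrak F}(H)$ has primitives only in degree $1$, deduces injectivity of $\gr_{\mathfrak F}(H)\to\gr_c(H)$, and needs a GK-dimension comparison for surjectivity. Your induction is elementary and does not need $\Xi$ finite.

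\textbf{Minor points.} You use that $H$ is connected, which is not assumed. It does follow: $\{F_m\}$ is a coalgebra filtration with $F_0=\Bbbk$ by Proposition~\ref{comultiplication}, so the coradical lies in $F_0$. Also, your ``equivalently'' reformulation via $H_{(m-1)}\otimes H_{(m-1)}$ is only a necessary condition for $f\in H_{(m)}$; you never use the converse, though.
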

	
	The proof of this proposition relies on the following technical lemma.
	
	\begin{lemma}\label{primitive-element}
		Let $H=\bigoplus_{\gamma\in \Gamma}H_{\gamma}$ be a connected $\Gamma$-graded Hopf algebra. Let $\GG=(\{z_{\xi}\}_{\xi\in \Xi},<)$ be a homogeneous PBW generating system of $H$. Assume $\GG$ is $\dd$-triangular and $(\Delta_{H},\dd)$-dominated, where $\dd\colon \Xi\to \Gamma$ is the positive map given by $\xi\mapsto \deg(z_{\xi})$. If $f\in P(H)$ is a nonzero homogeneous primitive element of $H$, then there exists $\xi\in \Xi$ such that
		\[
		f\in \Bbbk^{\times} \cdot z_{\xi} + \Span\bigl\{z_{V}\,\big|\, V\in \mathcal{P}_{+}(\Xi,<), ~ \dd(V) =\dd(\xi), ~ V<_{\rr}\xi \bigr\}.
		\]
	\end{lemma}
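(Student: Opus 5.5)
The plan is to look at the leading word of $f$ in the PBW basis and use primitivity to show that this word must be a single letter. Let $\gamma=\deg f$. Since $\GG$ is homogeneous, every $z_V$ is homogeneous of degree $\dd(V)$, so $\supp_{\GG}(f)\subseteq\{V\in\PP(\Xi,<)\mid \dd(V)=\gamma\}$. Moreover $\gamma\neq 0$: a nonzero primitive element cannot be a scalar, because $\Delta(1)=1\otimes 1\neq 2\cdot 1\otimes 1$ in characteristic zero.

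\emph{Step 1 (normalise).} Let $V$ be the maximal element of $\supp_{\GG}(f)$ with respect to the total order $<_{\rr,\dd}$. Rescaling by some $a\in\Bbbk^{\times}$ gives
\[
af\in z_V+\Span\{z_W\mid W\in\PP(\Xi,<),\ \dd(W)=\gamma,\ W<_{\rr,\dd}V\}.
\]
Since all these words have the same degree $\gamma$, the relation $W<_{\rr,\dd}V$ reduces to $W<_{\rr}V$. Also $V\neq\emptyset$, because $\gamma\neq 0$.

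\emph{Step 2 (the leading word is a letter).} Suppose $V$ has length at least $2$, and write $V=V_1\xi$ with $V_1\in\PP_+(\Xi,<)$ and $\xi\in\Xi$. The element $af$ satisfies the hypothesis of the first part of Corollary \ref{core facts}, with $U=V_1$. That corollary gives
\[
(\phi_{V_1}\otimes\id_H)(\Delta(af))\in \binom{V}{V_1}z_{\xi}+\Span\{z_N\mid N\in\PP(\Xi,<),\ N<_{\rr,\dd}\xi\}.
\]
Here $\binom{V}{V_1}\neq 0$ because $\operatorname{char}\Bbbk=0$. On the other hand, primitivity gives
\[
(\phi_{V_1}\otimes\id_H)(\Delta(af))=(\phi_{V_1}\otimes\id_H)(af\otimes 1+1\otimes af)=a\,\phi_{V_1}(f)\cdot 1,
\]
since $\phi_{V_1}(1)=\phi_{V_1}(z_\emptyset)=0$ for the nonempty word $V_1$. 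This element lies in $\Bbbk z_\emptyset$. Expressing it in the PBW basis $\{z_N\}_{N\in\PP(\Xi,<)}$, its coefficient at $z_\xi$ is zero. The corollary, however, says that this coefficient equals $\binom{V}{V_1}\neq 0$. This contradiction shows that $V=\xi$ is a single letter.

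\emph{Step 3 (conclude).} With $V=\xi$, every other word $W\in\supp_{\GG}(f)$ has $\dd(W)=\dd(\xi)$ and $W<_{\rr}\xi$. Such a $W$ is nonempty because $\dd(\xi)>0$. Hence $f\in\Bbbk^{\times}z_\xi+\Span\{z_W\mid W\in\PP_+(\Xi,<),\ \dd(W)=\dd(\xi),\ W<_{\rr}\xi\}$, which is the claim.

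The main point to check carefully is Step 2, namely that Corollary \ref{core facts} genuinely applies to $af$. This needs every lower term of $af$ to lie below $V$ in $<_{\rr,\dd}$, which is exactly how $V$ was chosen in Step 1. It also needs the error terms in the corollary to be indexed by words other than $\xi$ itself, so that the coefficient at $z_\xi$ is exactly $\binom{V}{V_1}$; this holds because those terms satisfy $N<_{\rr,\dd}\xi$. Beyond that, the only inputs are positivity of $\dd$ and characteristic zero.
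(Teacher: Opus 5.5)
Your proof is correct and follows the paper's argument. Both take the $<_{\rr}$-maximal word in $\supp_{\GG}(f)$ and show that, if this word had length at least $2$, a cross term with nonzero binomial coefficient (nonzero by characteristic zero) would survive in $\Delta_{H}(f)$, contradicting primitivity. The only difference is packaging: the paper reads this off Proposition~\ref{comultiplication} for all proper nonempty subwords at once, whereas you isolate the single cross term $(V_1,\xi)$ by applying $\phi_{V_1}\otimes\id_{H}$ and invoking Corollary~\ref{core facts}, which already handles the bookkeeping of the lower terms.
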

	
	\begin{proof}
		Let $\deg(f)=\gamma$. Since the family $\{z_{V}\}_{V\in \mathcal{P}(\Xi, <)}$ forms a homogeneous basis of $H$, we write
		\[
		f = a_{1}z_{V_{1}} + \cdots + a_{s}z_{V_{s}}
		\]
		for some $a_{1},\ldots, a_{s}\in \Bbbk^{\times}$ and $V_{1},\ldots, V_{s}\in \mathcal{P}_{+}(\Xi,<)$ satisfying
		\[
		\dd(V_{1})=\cdots = \dd(V_{s}) =\gamma \quad \text{and} \quad V_{1} <_{\rr} \cdots <_{\rr} V_{s}.
		\]
		In particular, $V_{1} <_{\rr,\dd} \cdots <_{\rr,\dd} V_{s}$. By Proposition \ref{comultiplication},
		\begin{eqnarray*}
			\Delta_{H}(f) & \in & \sum_{i=1}^{s} \Big ( \sum_{W|V_{i}} a_{i} \binom{V_{i}}{W} z_{W}\otimes z_{V_{i}/W} \\
			&& + \Span\bigl\{z_{P}\otimes z_{Q}\,\big|\, (P,Q) \in \mathcal{P}_{+}(\Xi,<)^{2}, ~ \dd(PQ)= \gamma, ~ \Pi(PQ) <_{\rr} V_{i} \bigr\} \Big) \\
			& \subseteq & 1\otimes f+ f\otimes 1 +   \sum_{ \substack{W|V_{s} \\ W\neq \emptyset, W\neq V_{s}}} a_{s} \binom{V_{s}}{W} z_{W}\otimes z_{V_{s}/W} \\
			&& + \Span\bigl\{z_{P}\otimes z_{Q}\,\big|\, (P,Q) \in \mathcal{P}_{+}(\Xi,<)^{2}, ~ \dd(PQ)= \gamma, ~ \Pi(PQ) <_{\rr} V_{s} \bigr\}.
		\end{eqnarray*}
		It follows that $(W,V_{s}/W) \in \supp_{\GG}(\Delta(f))$ for any proper nonempty subword $W$ of $V_{s}$. But $\Delta_{H}(f) =1\otimes f+f\otimes 1$, so $V_{s}$ has length $1$, i.e., $V_{s} =\xi\in \Xi$.
	\end{proof}
	
	\begin{proof}[Proof of Proposition \ref{coradical filtration}]
		For convenience, set $\Delta=\Delta_{H}$, $\dd = \dd_{H}$ and
		\[
		H_{[m]}:=\Span\{z_{V}\mid V\in\mathcal{P}(\Xi,<),\ \dd(V)\le m\},\quad m\ge0.
		\]
		By convention, let $H_{[-1]} = H_{(-1)} = 0$. By Lemma \ref{rearrangement} and Proposition \ref{comultiplication}, the sequence $\mathfrak{F}:=(H_{[m]})_{m\geq 0}$ is a Hopf algebra filtration on $H$ satisfying
		\[
		H_{[0]}=H_{(0)} =\Bbbk \cdot 1_{H}, \quad H_{[m]}\subseteq H_{(m)} \quad \text{for all} \quad m\geq 1.
		\]
		Hence the associated graded algebra
		\[
		\gr_{\mathfrak{F}}(H):= \bigoplus_{m\geq 0}H_{[m]}/H_{[m-1]}
		\]
		is a connected $\mathbb{N}$-graded Hopf algebra, and the canonical map
		\begin{eqnarray*}
			\Phi \colon \gr_{\mathfrak{F}}(H) &\longrightarrow& \gr_{c}(H) \\
			x+ H_{[m-1]} &\mapsto& x+H_{(m-1)}
		\end{eqnarray*}
		is a homomorphism of connected $\mathbb{N}$-graded Hopf algebras. We first claim that $P(\gr_{\mathfrak{F}}(H)) = \gr_{\mathfrak{F}}(H)_{1}$. Then $\gr_{\mathfrak{F}}(H)$ is strictly graded and $\Phi|_{P(\gr_{\mathfrak{F}}(H))}$ is injective. Therefore $\Phi$ is injective by \cite[Corollary 1.3.11]{HS20} or \cite[Lemma 5.3.3]{Mo93}. Define
		\[
		\overline{z}_{\xi}:= z_{\xi} +H_{[\dd(\xi)-1]} \in H_{[\dd(\xi)]}/H_{[\dd(\xi)-1]} \subseteq \gr_{\mathfrak{F}}(H), \quad \xi \in \Xi.
		\]
		One readily verifies that $\overline{\GG}:=(\{\overline{z}_{\xi}\}_{\xi\in \Xi}, <)$ forms a PBW generating system of $\gr_{\mathfrak{F}}(H)$. In particular, $\gr_{\mathfrak{F}}(H)$ is generated by $\{\overline{z}_{\xi}\}_{\xi\in \Xi}$ and thus finitely generated. We obtain
		\[
		\GKdim \gr_{c}(H) = \GKdim H = \GKdim \gr_{\mathfrak{F}}(H) = \#(\Xi) < \infty.
		\]
		Here the first equality follows from \cite[Proposition 3.4]{ZSL20}, the second from \cite[Proposition 6.6]{KL00}, and the third from Lemma \ref{GKdimension}. Thus $\Phi$ is an isomorphism by \cite[Theorem A]{LZ23}. Consequently, $H_{(m)} = H_{[m]}$ for all $m\geq 0$, which completes the proof.
		
		It remains to verify $P(\gr_{\mathfrak{F}}(H)) = \gr_{\mathfrak{F}}(H)_{1}$. Suppose otherwise. Then there exists a nonzero homogeneous primitive element $0\neq f\in \gr_{\mathfrak{F}}(H)_{m}$ for some integer $m\geq 2$. One checks that $\overline{\GG}=(\{\overline{z}_{\xi}\}_{\xi\in \Xi}, <)$ is $\dd$-triangular and $(\overline{\Delta}, \dd)$-dominated, where $\overline{\Delta}$ denotes the comultiplication of $\gr_{\mathfrak{F}}(H)$. By Lemma \ref{primitive-element}, $f$ can be written as
		\[
		f= a_{0} \overline{z}_{\xi} + \sum_{i=1}^{s}a_{i}\overline{z}_{V_{i}},
		\]
		with $s\geq 0$, $a_{i} \in \Bbbk^{\times}$, $\xi\in \Xi$, $\dd(\xi)=m$, and each $V_{i} \in \mathcal{P}(\Xi)$ satisfying $\dd(V_{i})=m$ and $V_{i}<_{\rr} \xi$. Set
		\[
		F=z_{\xi} + \frac{1}{a_{0}}\sum_{i=1}^{s} a_{i} z_{V_{i}}.
		\]
		Since $f = \overline{F}:= F + H_{[m-1]} \in \gr_{\mathfrak{F}}(H)$ and $\overline{\Delta}(f) =1\otimes f+f\otimes 1$, we compute
		\begin{eqnarray*}
			\Delta(F) &=& 1\otimes F+F\otimes 1 + \Span\bigl\{z_{P}\otimes z_{Q} \,\big|\, (P,Q) \in \mathcal{P}_{+}(\Xi,<)^{2}, ~ \dd(PQ)\leq m-1 \bigr\} \\
			&\subseteq& H_{(0)}\otimes H + H\otimes H_{(m-2)}.
		\end{eqnarray*}
		This yields $F\in H_{(m-1)}$. Meanwhile, $F\in z_{\xi} + H_{+}[\GG, \xi, \dd]$. Therefore
		\[
		\bigl( z_{\xi} + H_{+}[\GG, \xi, \dd] \bigr) \cap H_{(m-1)} \neq \emptyset,
		\]
		contradicting the minimality hypothesis. Hence $P(\gr_{\mathfrak{F}}(H)) = \gr_{\mathfrak{F}}(H)_{1}$, as desired.
	\end{proof}
	
	The following result provides a criterion for a Hopf algebra to be an IHOE of its primitive part.
	
	\begin{corollary}\label{IHOE-criterion}
		Let $H$ be a Hopf algebra admitting a finite PBW generating system $\GG=(\{z_{\xi}\}_{\xi\in \Xi}, <)$ which is $\dd_{H}$-minimal, $\dd_{H}$-triangular and $(\Delta_{H},\dd_{H})$-dominated, where $\dd_{H}\colon\Xi \to \mathbb{N}$ is the positive map defined by $\dd_{H}(\xi)=\min\{\, m \geq 1 \mid z_{\xi}\in H_{(m)}\,\}$. If $\dd_{H}$ is order-preserving, then $H$ is an IHOE of its primitive part $U(P(H))$, the universal enveloping algebra of $P(H)$.
	\end{corollary}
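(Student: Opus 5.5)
The plan is to realize $U(P(H))$ as the Hopf subalgebra spanned by ordered monomials in the generators of $\dd_{H}$-value $1$, and then to read off the remaining Ore steps from Proposition \ref{PBW-imply-IOE}. Write $\Xi=\{\xi_{1}<\cdots<\xi_{n}\}$ and $\dd=\dd_{H}$. Since $\dd$ is order-preserving, there is an index $k$ with $\dd(\xi_{i})=1$ exactly for $i\le k$. Put $\Xi_{i}=\{\xi_{1},\ldots,\xi_{i}\}$ and
\[
H_{i}=\Span\{z_{V}\mid V\in\PP(\Xi_{i},{<}|_{\Xi_{i}})\}.
\]

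First, each $H_{i}$ is a Hopf subalgebra. By Lemma \ref{rearrangement} it is a subalgebra. $(\Delta_{H},\dd)$-domination gives $\Delta_{H}(z_{\xi_{i}})-1\otimes z_{\xi_{i}}-z_{\xi_{i}}\otimes 1\in H_{i-1}\otimes H_{i-1}$, because $P<_{\rr}\xi_{i}$ forces the last letter of $P$ to be below $\xi_{i}$. Hence $H_{i}$ is a subbialgebra, and it is a Hopf subalgebra since $H$ is connected. Proposition \ref{PBW-imply-IOE} then gives $H_{i}=H_{i-1}[z_{\xi_{i}};\sigma_{i},\delta_{i}]$ for every $i$. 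So $H$ is an IHOE of $H_{k}$, and the whole statement reduces to showing $H_{k}=U(P(H))$.

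Second, I identify $P(H)$. Proposition \ref{coradical filtration} gives
\[
H_{(1)}=\Span\{z_{V}\mid \dd(V)\le 1\}=\Bbbk\oplus\Span\{z_{\xi_{1}},\ldots,z_{\xi_{k}}\},
\]
since every $\xi$ has $\dd(\xi)\ge 1$. For $i\le k$, the domination condition allows only pairs with $\dd(PQ)\le 1$ and $P,Q$ nonempty, which is impossible, so each $z_{\xi_{i}}$ is primitive. Because $H$ is connected, $H_{(1)}=\Bbbk\oplus P(H)$, and therefore $P(H)=\Span\{z_{\xi_{1}},\ldots,z_{\xi_{k}}\}$.

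Third, I compare $H_{k}$ with $U(P(H))$. $H_{k}$ is the subalgebra generated by $P(H)$. In characteristic zero, the subalgebra generated by the primitive space of a connected Hopf algebra is canonically isomorphic to $U(P(H))$: the natural map $U(P(H))\to H$ is injective by the standard argument that a coalgebra map from a connected coalgebra is injective once it is injective on primitives (\cite[Corollary 1.3.11]{HS20} or \cite[Lemma 5.3.3]{Mo93}). Its image is $H_{k}$. Alternatively, PBW for $U(P(H))$ together with the basis $\{z_{V}\}_{V\in\PP(\Xi_{k})}$ of $H_{k}$ gives the bijection directly. Thus $H$ is an $(n-k)$-step IHOE of $U(P(H))=H_{k}$.

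The main obstacle is the identification of $P(H)$ with the span of the $z_{\xi_{i}}$, $i\le k$. That step needs Proposition \ref{coradical filtration}: without minimality, some combination of higher generators could be primitive. The order-preserving hypothesis is exactly what makes these degree-one generators form an initial segment of the order, so that $H_{k}$ appears in the tower at all.
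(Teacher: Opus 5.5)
Your proposal is correct and follows the paper's own proof. Both arguments realize $H$ as the IHOE $\Bbbk[z_{\xi_1};\sigma_1,\delta_1]\cdots[z_{\xi_n};\sigma_n,\delta_n]$ via Propositions \ref{PBW-imply-IOE} and \ref{PBW-imply-IHOE}, use order-preservation to make $\{\xi\mid \dd_H(\xi)=1\}$ an initial segment $\Xi_k$, and read off $P(H)=\Span\{z_{\xi_1},\ldots,z_{\xi_k}\}$, hence $H_k=U(P(H))$, from Proposition \ref{coradical filtration}. You additionally spell out what the paper leaves implicit (primitivity of the degree-one generators, $H_{(1)}=\Bbbk\oplus P(H)$, and injectivity of $U(P(H))\to H$ in characteristic zero); the connectedness of $H$ you invoke is not a hypothesis of the corollary, but it follows from Proposition \ref{coradical filtration} at $m=0$.
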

	
	\begin{proof}
		Let $\Xi=\{\xi_{1},\ldots, \xi_{n}\}$ with $\xi_{1} < \cdots < \xi_{n}$. For each $i=1,\ldots, n$, define $\Xi_{i}= \{\xi_{1},\ldots, \xi_{i}\}$.  By Proposition \ref{PBW-imply-IHOE}, $H$ is an $n$-step IHOE of $\Bbbk$ with presentation
		\[
		H=\Bbbk[z_{\xi_{1}};\sigma_{1},\delta_{1}]\cdots [z_{\xi_{n}};\sigma_{n},\delta_{n}].
		\]
		Since $\dd_{H}$ is order-preserving, there exists an integer $\ell\in\{1,\ldots, n\}$ such that $\{\xi\in \Xi\mid \dd_{H}(\xi) =1\} = \Xi_{\ell}$. By Proposition \ref{coradical filtration}, $P(H) =\Span\{z_{\xi} \mid \xi\in \Xi_{\ell}\}$ and hence $U(P(H)) =H_{\ell}$. Therefore, $H=H_{\ell}[z_{\xi_{\ell+1}};\sigma_{\ell+1},\delta_{\ell+1}]\cdots [z_{\xi_{n}};\sigma_{n},\delta_{n}]$ is an  IHOE of $U(P(H))$.
	\end{proof}
	
	\begin{remark}
		Consider the umbrella Hopf algebra $H=\mathrm{UM}(2,2)$; see Remark \ref{example-order-preserving}. It is known that $H$ is not an IHOE of its primitive part. Consequently, $H$ cannot admit any finite PBW generating system $\GG$ that is $\dd_{H}$-minimal, $\dd_{H}$-triangular and $(\Delta_{H},\dd_{H})$-dominated, with $\dd_{H}$ order-preserving. Here $\dd_{H}$ denotes the positive map appearing in Corollary \ref{IHOE-criterion}.
	\end{remark}

	\section{Computations for coideal subalgebras of IHOEs}
	\label{sec:classification}
	
	We now exploit the structural theory of IHOEs developed in Section \ref{sec:IHOE-structure} to explicitly classify their coideal subalgebras. Drawing on the thin replacement machinery for PBW generating systems, we construct an algorithm for classifying left and  right coideal subalgebras. We then implement this algorithm on the $3$-GK-dimensional connected Hopf algebras $A(\lambda_{1},\lambda_{2},\alpha)$ and $B(\lambda)$, obtaining complete classifications of their right coideal subalgebras.

	\begin{lemma}\label{coideal-GK-dimension-one}
		Let $A$ be a left or right coideal subalgebra of a connected Hopf algebra $H$. If $\GKdim A = 1$, then $A$ is a Hopf subalgebra generated by a primitive element.
	\end{lemma}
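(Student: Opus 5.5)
The plan has two parts: first produce a nonzero primitive element $x\in A$, then show that $A=\Bbbk[x]$. Since $\Bbbk[x]$ is a Hopf subalgebra generated by the primitive element $x$, this gives the statement. By symmetry, assume $A$ is a right coideal subalgebra, so $\Delta_H(A)\subseteq A\otimes H$.

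\emph{Step 1: a primitive element in $A$.} Since $\GKdim A=1$, we have $A\neq\Bbbk$, so $A^{+}=A\cap H^{+}\neq 0$. Choose $n\ge 1$ minimal with $A^{+}\cap H_{(n)}\neq 0$, and pick a nonzero $x$ in this intersection. For a connected Hopf algebra the standard coradical-filtration estimate gives
\[
\Delta_H(x)-x\otimes 1-1\otimes x\in H_{(n-1)}^{+}\otimes H_{(n-1)}^{+}.
\]
Write this difference as $\sum_i a_i\otimes h_i$ with the $h_i$ linearly independent in $H^{+}_{(n-1)}$. Applying $\id\otimes\phi$ for suitable functionals $\phi$ shows two things. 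Because $\Delta_H(x)\in A\otimes H$, each $a_i$ lies in $A$. Because of the displayed estimate, each $a_i$ lies in $H_{(n-1)}^{+}$. Hence $a_i\in A^{+}\cap H_{(n-1)}$, which is zero when $n\ge 2$ by minimality. Therefore either $n=1$ or the sum vanishes; in both cases $x$ is primitive. In characteristic zero a nonzero primitive element of a connected Hopf algebra is transcendental, so $\Bbbk[x]\subseteq A$ is a polynomial Hopf subalgebra of GK-dimension $1$.

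\emph{Step 2: $A=\Bbbk[x]$.} Filter $A$ by $A_{(m)}:=A\cap H_{(m)}$. Then $\gr A$ embeds in $\gr_c(H)$ as a connected graded right coideal subalgebra, and $\bar x$ is a nonzero element of $\gr A$ in degree $1$. By the structure theory of homogeneous coideal subalgebras of connected graded Hopf algebras (\cite{Zh24}; compare Lemma \ref{GKdimension} and Theorem \ref{coideal-structure-homo-relative}), $\gr A$ has a homogeneous PBW generating system. The number of its generators equals $\GKdim\gr A$, and I expect $\GKdim\gr A=\GKdim A=1$. So $\gr A=\Bbbk[y]$ for a single homogeneous generator $y$. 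Since $\bar x\in(\gr A)_1$ is nonzero, $y$ must have degree $1$, and hence $\gr A=\Bbbk[\bar x]$. A standard lifting argument along the exhaustive filtration, by induction on $m$ using $A_{(m)}/A_{(m-1)}=\Bbbk\,\bar x^{m}$, then gives $A_{(m)}\subseteq\Bbbk[x]$ for all $m$, so $A=\Bbbk[x]$.

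\emph{Main obstacle.} The delicate point is the equality $\GKdim\gr A=\GKdim A$, because $H$ itself is not assumed to have finite GK-dimension. The equality follows from \cite[Proposition 6.6]{KL00} once the filtration $\{A_{(m)}\}$ is known to be finite-dimensional in each degree. I would first try to derive this local finiteness from $\GKdim A<\infty$ together with Step 1. If that fails, I would instead replace $H$ by a finitely generated Hopf subalgebra containing a finite generating set of $A$; this requires showing $A$ is finitely generated, which is the other point needing care.
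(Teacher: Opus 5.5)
Your Step 1 is the paper's argument and is correct. Step 2 takes a different route, and as written it leaves its key step open.

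The paper finishes in one line: $\Bbbk[x]\subseteq A$ are coideal subalgebras of the connected Hopf algebra $H$ with the same finite GK-dimension, so $\Bbbk[x]=A$ by \cite[Theorem A]{LZ23}. You instead pass to $\gr A\subseteq\gr_{c}(H)$, and everything there depends on a bound for $\GKdim\gr A$. Without $\GKdim\gr A<\infty$ you cannot apply Theorem~\ref{coideal-structure-homo-relative}, and without $\GKdim\gr A=1$ you cannot count generators. You leave this as an expectation. Neither of your two proposed remedies is carried out, and neither is easy:
\begin{itemize}
\item Finite-dimensionality of $A\cap H_{(m)}$ for $m\ge 2$ is not available a priori when $P(H)$ is infinite-dimensional. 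The reduced coproduct of $A^{+}\cap H_{(m)}$ only lands in $\sum_{i}A_{(i)}\otimes H_{(m-i)}$, which can be infinite-dimensional.
\item Finite generation of $A$ is essentially the conclusion you are trying to prove.
\end{itemize}

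The missing observation is that you only need the inequality $\GKdim\gr A\le\GKdim A$, not the equality. This inequality holds for every exhaustive $\mathbb{N}$-filtration, with no finiteness hypothesis. To see it, take homogeneous $\bar a_{1},\dots,\bar a_{k}\in\gr A$ with lifts $a_{i}$, and set $V=\Bbbk+\sum_{i}\Bbbk\bar a_{i}$ and $W=\Bbbk+\sum_{i}\Bbbk a_{i}$. Every word of length $\le n$ in the $\bar a_{i}$ is the symbol of the corresponding word in the $a_{i}$. So $V^{n}$ lies in the image of $\gr(W^{n})\hookrightarrow\gr A$, where $W^{n}$ carries the induced filtration. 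Hence $\dim V^{n}\le\dim\gr(W^{n})=\dim W^{n}$.

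Since $\bar x$ is a nonzero primitive element of $\gr_{c}(H)$, this gives $1\le\GKdim\gr A\le\GKdim A=1$. Now apply Theorem~\ref{coideal-structure-homo-relative} to $\Bbbk[\bar x]\subseteq\gr A$. This gives $m=1-1=0$, so $\gr A=\Bbbk[\bar x]$ directly, and your lifting induction then yields $A=\Bbbk[x]$.

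You should also justify that $\gr A$ is a right coideal of $\gr_{c}(H)$, that is, $(A\otimes H)\cap\sum_{i}H_{(i)}\otimes H_{(m-i)}=\sum_{i}A_{(i)}\otimes H_{(m-i)}$. This follows, as in your Step 1, by applying coordinate functionals of a basis of $H$ adapted to the coradical filtration to the second tensor factor.

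Once repaired, your route replaces the ungraded result \cite[Theorem A]{LZ23} with the graded structure theorem of \cite{Zh24} plus an elementary filtration argument. The paper's route is shorter.
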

	
	\begin{proof}
		Set $n_{0}:=\min \{n\geq 0 \mid A_{(n)}\neq \Bbbk\}$, where $A_{(n)} =A\cap H_{(n)}$. Choose $x\in A_{(n_{0})}\setminus \Bbbk$. Then
		\[
		\Delta_{H}(x) -\bigl(1\otimes x+ x\otimes 1\bigr) \in (A\otimes H) \cap (H_{(n_{0}-1)} \otimes H) = A_{(n_{0}-1)} \otimes H =\Bbbk \otimes H.
		\]
		By the counit axiom, $\Delta(x) =1\otimes x+x\otimes 1$, so $x$ is primitive. Since $\Bbbk[x] \subseteq A$ and $\GKdim \Bbbk[x] =\GKdim A =1$, we obtain $\Bbbk[x] = A$ by \cite[Theorem A]{LZ23}.
	\end{proof}
	
	\begin{definition}
		Let $H$ be a connected Hopf algebra of finite GK-dimension. A left or right coideal subalgebra $A$ of $H$ is called \emph{substantial} if $2\leq \GKdim A \leq \GKdim H-1$.
	\end{definition}
	
	\subsection{General algorithm}
	\label{subsec:algorithm}
	
	Let $H$ be a Hopf algebra equipped with an iterated Hopf Ore extension structure
	\[
	H = \Bbbk[X_{1};\sigma_{1},\delta_{1}][X_{2};\sigma_{2},\delta_{2}]\cdots
	[X_{n};\sigma_{n},\delta_{n}].
	\]
	Define $z_{i} = X_{i} -\varepsilon_{H}(X_{i})$ for $i=1,\ldots, n$. By Proposition \ref{IHOE properties}(5), the ordered family $\GG=(\{z_{i}\}_{i\in \II_{n}}, <)$ is a triangular and $\Delta_{H}$-dominated PBW generating system of $H$.
	
	By Lemma \ref{triangular-construct}, we may fix an order-preserving positive map $\dd\colon\II_{n} \to\mathbb{N}$ such that $\GG$ is $\dd$-triangular and $(\Delta_{H},\dd)$-dominated. For any right (resp.\ left) coideal subalgebra $A$ of $H$, define
	\[
	\II_{n}'=\II_{n}(A,\GG,\dd):=\{\, i\in\II_{n}\mid (z_{i}+H[\GG,i,\dd]) \cap A \neq \emptyset\},
	\]
	and select a $\dd$-thin replacement $\hat{\GG}=(\{\hat{z}_{i}\}_{i\in \II_{n}}, <)$ of $\GG$ satisfying $\hat{z}_{i} \in A$ for all $i\in \II_{n}'$. Explicitly, the $\dd$-thin replacement may be constructed so that $\hat z_{i} =z_{i}$ whenever $i\not\in \II_{n}'$, while
	\[
	\hat z_{i}=z_{i}+\sum_{U}\alpha_{i,U}z_{U},
	\]
	for $i\in\II_{n}'$, with $\alpha_{i,U}\in\Bbbk$ and $U$ ranging over words in
	\[
	\{U \in \PP_{+}(\II_{n},<) \mid \dd(U)\le \dd(i), ~ U<_{\rr}i\} = \{U \in \PP_{+}(\II_{i-1},<) \mid \dd(U)\le \dd(i)\}.
	\]
	By Proposition \ref{proposition of replacement}, $\hat{\GG}$ remains $\dd$-triangular and $(\Delta_{H}, \dd)$-dominated. Hence Proposition \ref{PBW-imply-IHOE} implies that $H$ admits an IHOE presentation
	\begin{eqnarray}\label{IHOE-thin-replacement}
		H=\Bbbk[\hat{z}_{1};\hat{\sigma}_{1},\hat{\delta}_{1}]\cdots [\hat{z}_{n};\hat{\sigma}_{n},\hat{\delta}_{n}].
	\end{eqnarray}
	Furthermore, Theorem \ref{PBW-sub-hereditary} and Proposition \ref{ring-homo-properties} ensure that
	\[
	\hat{\GG}|_{\II_{n}'}:= \bigl(\{\hat{z}_{i}\}_{i\in \II_{n}'}, {<}|_{\II_{n}'}\bigr)
	\]
	forms a $\dd|_{\II_{n}'}$-triangular PBW generating system of $A$, with $\#(\II_{n}') =\GKdim A$.
	
	Accordingly,  classifying all right (resp.\ left) coideal subalgebras of $H$ reduces to determining which subsets $\II_{n}'\subset\II_{n}$ and parameters $\{\alpha_{i,U}\}$ yield families $\{\hat z_{i}\}_{i\in\II_{n}'}$ satisfying the required structural conditions. This observation motivates the following algorithm.
	
	\medskip\noindent
	\textbf{General algorithm for  classifying  right coideal subalgebras of $H$.}
	\begin{enumerate}
		\item[1.] \textbf{Choose a suitable map $\dd$.}
		Select an order-preserving positive map $\dd\colon\II_{n}\longrightarrow\mathbb{N}$
		such that $\GG=(\{z_{i}\}_{i\in \II_{n}}, <)$ is $\dd$-triangular and $(\Delta_{H},\dd)$-dominated. The $\dd$-thin perturbation of each $z_{i}$ takes the form
		\[
		\hat z_{i}=z_{i}+\sum_{U}\alpha_{i,U}z_{U},
		\]
		where $i\in\II_{n}$, $\alpha_{i,U}\in\Bbbk$, and $U$ runs over words in $\PP_{+}(\II_{i-1},<)$ satisfying $\dd(U)\le \dd(i)$.
		
		\item[2.] \textbf{Pick a subset $\II_{n}'\subset \II_{n}$ with $1\leq \#(\II_{n}') \leq n-1$.}
		Set $\alpha_{i,U}=0$ for all $i\not\in \II_{n}'$. Only the coefficients $\{\alpha_{i,U}\}$ with $i\in \II_{n}'$ remain to be determined, so that
		\[
		A:=\Span\{\hat{z}_{V}\mid V\in \PP(\II_{n}', {<}|_{\II_{n}'})\}
		\]
		becomes a right coideal subalgebra of $H$, with $\hat{\GG}|_{\II_{n}'}:= (\{\hat{z}_{i}\}_{i\in \II_{n}'}, {<}|_{\II_{n}'})$ serving as its $\dd|_{\II_{n}'}$-triangular PBW generating system.
		
		\item[3.] \textbf{Impose the right coideal condition.}
		To satisfy the right coideal property together with the $(\Delta_{H},\dd)$-dominated condition, we require for every $i\in\II_{n}'$
		\begin{eqnarray*}
			\Delta(\hat z_{i}) &\in& 1\otimes\hat z_{i}+\hat z_{i}\otimes 1 \\
			&& +\Span\bigl\{\hat z_{P}\otimes\hat z_{Q}\,\big|\,
			P\in\PP(\II_{n}',{<}|_{\II_{n}'}),\; Q\in\PP(\II_{n},<),\;
			\dd(PQ)\le\dd(i),\; P, Q<_{\rr }i\bigr\}.
		\end{eqnarray*}
		Matching coefficients yields constraints $(\ast)$ on the parameters $\{\alpha_{i,U}\}$.
		
		\item[4.] \textbf{Impose the IHOE structure condition.}
		For $A$ to be a subalgebra compatible with the IHOE presentation \eqref{IHOE-thin-replacement}, we demand for all $i,j\in\II_{n}'$ with $i< j$
		\begin{eqnarray*}
			[\hat z_{j},\hat z_{i}] & \in & \bigl( \Span\bigl\{\hat z_{V}  \,\big|\, V \in \PP_{+}(\II_{n}',{<}|_{\II_{n}'}),~ \dd(V)\le \dd(i), ~ V<_{\rr }i\bigr\} \bigr) \cdot \hat z_{j}  \\
			&& + \Span\bigl\{\hat z_{V}\,\big|\, V \in \PP_{+}(\II_{n}',{<}|_{\II_{n}'}),~ \dd(V)\le \dd(ij), ~ V<_{\rr }j\bigr\}.
		\end{eqnarray*}
		By Proposition \ref{IHOE properties}(3) and the IHOE presentation \eqref{IHOE-thin-replacement}, whenever $\hat z_{i}$ is primitive we may strengthen the condition to
		\begin{eqnarray*}
			[\hat z_{j},\hat z_{i}] & \in &  \Bbbk \cdot \hat z_{j}  +  \Span\bigl\{\hat z_{V}\,\big|\, V \in \PP_{+}(\II_{n}',{<}|_{\II_{n}'}),~ \dd(V)\le \dd(ij), ~ V<_{\rr }j\bigr\}.
		\end{eqnarray*}
		Comparing coefficients produces further constraints $(\ast\ast)$ on $\{\alpha_{i,U}\}$.
		
		\item[5.] \textbf{Solve the system of equations $(\ast)$--$(\ast\ast)$.}
		Each solution $\{\alpha_{i,U}\}$ yields a right coideal subalgebra $
		A=\operatorname{Span}\{\hat{z}_{V}\mid V\in \PP(\II_{n}', {<}|_{\II_{n}'})\}$ with $\GKdim A= \#(\II_{n}')$
		
		\item[6.] \textbf{Output.}
		Let $\II_{n}'$ run over all subsets of $\II_{n}$ satisfying $1\leq \#(\II_{n}') \leq n-1$. Collecting all corresponding right coideal subalgebras, together with $\Bbbk$ and $H$ itself, we obtain a complete list of right coideal subalgebras of $H$. Among them, those corresponding to $2\leq \#(\II_{n}') \leq n-1$ are the substantial ones. 
	\end{enumerate}
	
	\begin{remark}
		By Lemma \ref{coideal-GK-dimension-one}, the parameter space of left (resp. right) coideal subalgebras of a Hopf algebra $H$ coincides with the projective space of its primitive space $P(H)$. Therefore, the nontrivial task is to classify the substantial  left (resp. right) coideal subalgebras.
	\end{remark}
	
	In the rest of this section, we apply this algorithm to classify all right coideal subalgebras for each member of the two representative families of noncocommutative connected Hopf algebras of GK-dimension $3$, originally classified by Zhuang~\cite{Zh13}. 
	
	\subsection{The Hopf algebra $A(\lambda_{1},\lambda_{2},\alpha)$} 
	\label{subsec:A-type}
	
	Recall from \cite{Zh13} that $A(\lambda_{1},\lambda_{2},\alpha)$ is the algebra generated by $X,Y,Z$ subject to the relations
	\begin{equation*}
		[X,Y]=0,\qquad [Z,X]=\lambda_{1} X-\alpha Y,\qquad [Z,Y]=\lambda_{2} Y,
	\end{equation*}
	where $\alpha,\lambda_{1},\lambda_{2}\in \Bbbk$ satisfy $\alpha=0$ whenever $\lambda_{1}\neq \lambda_{2}$, and $\alpha\in\{0,1\}$ when $\lambda_{1}=\lambda_{2}$. This is a connected Hopf algebra with comultiplication
	\begin{align*}
		\Delta(X)&=X\otimes1+1\otimes X,\\
		\Delta(Y)&=Y\otimes1+1\otimes Y,\\
		\Delta(Z)&=Z\otimes1+1\otimes Z+X\otimes Y-Y\otimes X.
	\end{align*}
	It admits an IHOE filtration
	\[
	\Bbbk :=A_{0}\subset  A_{1}=\Bbbk[Y] \subset  A_{2}=\Bbbk[Y][X] \subset
	A_{2}[Z;\delta_{Z}]=A(\lambda_{1},\lambda_{2},\alpha),
	\]
	where the derivation $\delta_{Z}\colon A_{2} \to A_{2}$ is defined by $\delta_{Z}(Y)=\lambda_{2} Y$ and $\delta_{Z}(X)=\lambda_{1} X-\alpha Y$.
	
	\begin{remark}
		It is clear that the primitive space of $A(\lambda_{1},\lambda_{2},\alpha)$ is spanned by $X,Y$. Consequently, the right coideal subalgebras of GK-dimension $1$ are parameterized by the projective line $\mathbb{P}^{1}$, where each $[a,b]\in \mathbb{P}^{1}$ corresponds to $\Bbbk[aX+bY]$.    
	\end{remark}
	
	We now apply the general algorithm to classify all substantial right coideal subalgebras of $A(\lambda_{1},\lambda_{2},\alpha)$. Consider the PBW generating system $\GG=(\{z_{i}\}_{i\in\II_{3}},<)$ with
	$z_{1}=Y$, $z_{2}=X$, $z_{3}=Z$ and ordering $1<2<3$.
	Let $\dd\colon\II_{3} \to \mathbb{N}$ be the map induced by the coradical filtration; then
	$\dd(1)=\dd(2)=1,\ \dd(3)=2$, so $\dd$ is order-preserving. One verifies that $\GG$ is $\dd$-triangular and $(\Delta,\dd)$-dominated. The $\dd$-thin replacements take the form
	\[
	\begin{aligned}
		\hat z_{1} &= z_{1},\\
		\hat z_{2} &= z_{2} + a z_{1},\\
		\hat z_{3} &= z_{3} + bz_{1} + cz_{2} + dz_{1}^{2} + ez_{1}z_{2} + fz_{2}^{2}
	\end{aligned}
	\]
	with $a,b,c,d,e,f\in\Bbbk$. Direct computation yields
	\[
	\begin{aligned}
		\Delta(\hat z_{1}) &= 1\otimes\hat z_{1} + \hat z_{1}\otimes1,\\
		\Delta(\hat z_{2}) &= 1\otimes\hat z_{2} + \hat z_{2}\otimes1,\\
		\Delta(\hat z_{3}) &= 1\otimes\hat z_{3} + \hat z_{3}\otimes1
		+ (e+1)(z_{2}\otimes z_{1}) + (e-1)(z_{1}\otimes z_{2}) \\
		&\qquad + 2d(z_{1}\otimes z_{1}) + 2f(z_{2}\otimes z_{2}).
	\end{aligned}
	\]
	Hence $\hat z_{1},\hat z_{2}$ are primitive, while $\hat z_{3}$ is not. Implementing Steps 2–5 of the general algorithm gives the following families.
	\begin{enumerate}
		\item $\II_{3}'=\{1,2\}$. We may set $a=0$. Since $\hat z_{1} =z_{1},\hat z_{2}=z_{2}$ are primitive and
		\[
		[\hat z_{2},\hat z_{1}]=[z_{2}, z_{1}]=0,
		\]
		the corresponding right coideal subalgebra is
		\[
		A= \Bbbk[z_{1}][z_{2}].
		\]
		
		\item $\II_{3}'=\{1,3\}$. We may set $b=d=0$. The right coideal condition $\Delta(\hat z_{3})\in A\otimes H$ forces the terms $2f(z_{2}\otimes z_{2})$ and $(e+1)(z_{2}\otimes z_{1})$ to vanish, so $f=0$ and $e=-1$. Therefore
		\[
		\hat z_{3} = z_{3}  + cz_{2}  - z_{1}z_{2},\qquad c\in\Bbbk.
		\]
		Moreover,
		\[
		[\hat z_{3},\hat z_{1}] = [z_{3}  + cz_{2} - z_{1}z_{2},\,z_{1}]
		= [z_{3},z_{1}] = \lambda_{2} z_{1} =\lambda_{2} \hat z_{1}.
		\]
		The resulting right coideal subalgebras are
		\[
		A =\Bbbk[z_{1}][z_{3}+cz_{2}-z_{1}z_{2};\delta], \quad c\in \Bbbk,
		\]
		where the derivation $\delta\colon \Bbbk[z_{1}] \to \Bbbk[z_{1}]$ satisfies $\delta(z_{1})=\lambda_{2} z_{1}$.
		
		\item $\II_{3}'=\{2,3\}$. We may set $c=f=0$. Rewrite $\Delta(\hat z_{3})$ as
		\[
		\begin{aligned}
			\Delta(\hat z_{3})
			&=1\otimes\hat z_{3}+\hat z_{3}\otimes1+ (e+1)(z_{2}\otimes z_{1})+(e-1)(z_{1}\otimes z_{2})
			+2d(z_{1}\otimes z_{1}) \\
			&= 1\otimes\hat z_{3}+\hat z_{3}\otimes1+\bigl[(e+1)z_{2}+2d z_{1}\bigr]\otimes z_{1}
			+  (e-1)z_{1}\otimes z_{2}.
		\end{aligned}
		\]
		The right coideal and $(\Delta,\dd)$-dominated conditions require both factors $(e+1)z_{2}+2d z_{1}$ and $(e-1)z_{1}$ to lie in
		$\Bbbk[\hat z_{2}]=\Bbbk[z_{2}+a z_{1}]$. This yields
		\[
		a=d \quad \text{and}\quad e=1.
		\]
		We further compute
		\[
		[\hat z_{3},\hat z_{2}] =[z_{3}+bz_{1}+az_{1}^{2} +z_{1}z_{2}, z_{2}+az_{1}]
		= (\lambda_{1} z_{2} - \alpha z_{1}) + a\lambda_{2} z_{1}
		= \lambda_{1} z_{2} + (a\lambda_{2}-\alpha)z_{1}.
		\]
		The IHOE structure condition demands this commutator belong to
		$\Bbbk[\hat z_{2}]$, which imposes
		\[
		\alpha = a(\lambda_{2}-\lambda_{1}).
		\]
		We split into cases based on the parameters of $A(\lambda_{1},\lambda_{2},\alpha)$.
		\begin{itemize}
			\item $A(\lambda_{1},\lambda_{2},0)$ with $\lambda_{1} \neq \lambda_{2}$. The constraint reduces to $a(\lambda_{2}-\lambda_{1})=0$, forcing $a=0$. The resulting right coideal subalgebras are
			\[
			A=\Bbbk[z_{2}][z_{3}+bz_{1}+z_{1}z_{2};\delta], \qquad b\in\Bbbk,
			\]
			where the derivation $\delta\colon\Bbbk[z_{2}]\rightarrow \Bbbk[z_{2}]$ satisfies $\delta(z_{2})=\lambda_{1}z_{2}$.
			\item $A(\lambda_{1},\lambda_{2},0)$ with $\lambda_{1} =\lambda_{2}$. The corresponding right coideal subalgebras are
			\[
			A=\Bbbk[z_{2}+az_{1}][z_{3}+bz_{1}+az_{1}^{2}+z_{1}z_{2};\delta], \quad a,b \in \Bbbk,
			\]
			where the derivation $\delta\colon\Bbbk[z_{2}+az_{1}]\rightarrow \Bbbk[z_{2}+az_{1}]$ satisfies $\delta(z_{2}+az_{1})=\lambda_{1}(z_{2}+az_{1})$.
			\item $A(\lambda_{1},\lambda_{2},1)$ with $\lambda_{1} = \lambda_{2}$. No right coideal subalgebras arise from this family.
		\end{itemize}
	\end{enumerate}
	
	\begin{table}[H] 
		\centering
		\caption{Substantial right coideal subalgebras of $A(\lambda_{1},\lambda_{2},\alpha)$} 
		\label{table1}
		\renewcommand{\arraystretch}{1.5}
		\begin{tabular}{|>{\centering\arraybackslash}m{2.6cm}|c|>{\raggedright\arraybackslash}m{5.1cm}|>{\centering\arraybackslash}m{1.8cm}|}
			\hline
			Family & $\II_{3}'$ & Right Coideal Subalgebras & Parameters \\
			\hline
			\multirow{3}*[-13pt]{\shortstack{$A(\lambda_{1},\lambda_{2},0)$\\ $\lambda_{1}\neq\lambda_{2}$}}
			& $\{1,2\}$ & $\Bbbk[z_{1}][z_{2}]$ & \\
			\cline{2-4}
			& $\{1,3\}$ & $\Bbbk[z_{1}][z_{3}+c z_{2}-z_{1}z_{2}; \delta]$, \newline $\delta(z_{1})=\lambda_{2} z_{1}$ & $c\in\Bbbk$ \\
			\cline{2-4}
			& $\{2,3\}$ & $\Bbbk[z_{2}][z_{3}+b z_{1}+z_{1}z_{2}; \delta]$, \newline $\delta(z_{2})=\lambda_{1}z_{2}$ & $b\in\Bbbk$ \\
			\hline
			\multirow{3}*[-12pt]{\shortstack{$A(\lambda_{1},\lambda_{2},0)$\\ $\lambda_{1}=\lambda_{2}$}}
			& $\{1,2\}$ & $\Bbbk[z_{1}][z_{2}]$ & \\
			\cline{2-4}
			& $\{1,3\}$ & $\Bbbk[z_{1}][z_{3}+c z_{2}-z_{1}z_{2};\delta]$, \newline $\delta(z_{1})=\lambda_{2}z_{1}$ & $c\in\Bbbk$ \\
			\cline{2-4}
			& $\{2,3\}$ & $\Bbbk[z_{2}+az_{1}][z_{3}+bz_{1}+az_{1}^{2}+z_{1}z_{2};\delta]$, \newline $\delta(z_{2}+az_{1})=\lambda_{1}(z_{2}+az_{1})$ & $a,b\in \Bbbk$ \\
			\hline
			\multirow{2}*[-7pt]{\shortstack{$A(\lambda_{1},\lambda_{2},1)$\\ $\lambda_{1}=\lambda_{2}$}}
			& $\{1,2\}$ & $\Bbbk[z_{1}][z_{2}]$ & \\
			\cline{2-4}
			& $\{1,3\}$ & $\Bbbk[z_{1}][z_{3}+c z_{2}-z_{1}z_{2};\delta]$, \newline $\delta(z_{1})=\lambda_{2}z_{1}$ & $c\in\Bbbk$ \\
			\hline
		\end{tabular}
	\end{table}
	
	\subsection{The Hopf algebra $B(\lambda)$}
	\label{subsec:B-type}
	
	Recall from \cite{Zh13} that $B(\lambda)$, where $\lambda\in \Bbbk$, is the algebra generated by $X,Y,Z$ subject to
	\begin{equation*}
		[X,Y]=Y,\qquad [Z,X]=-Z+\lambda Y,\qquad [Z,Y]=\tfrac12 Y^{2}.
	\end{equation*}
	It is a connected Hopf algebra with comultiplication
	\begin{align*}
		\Delta(Y)&=Y\otimes1+1\otimes Y,\\
		\Delta(X)&=X\otimes1+1\otimes X,\\
		\Delta(Z)&=Z\otimes1+1\otimes Z+X\otimes Y.
	\end{align*}
	Moreover, it admits an IHOE filtration
	\[
	\Bbbk:=B_{0}\subset B_{1}=\Bbbk[Y]\subset B_{2}=
	B_{1}[X;\delta_{2}]\subset
	B_{2}[Z;\sigma_{3},\delta_{3}]=B(\lambda),
	\]
	where the derivation $\delta_{2}\colon B_{1} \to B_{1}$ satisfies
	$\delta_{2}(Y)=Y$; the automorphism $\sigma_{3}\colon B_{2}\to B_{2}$ is given by
	$\sigma_{3}(Y)=Y$, $\sigma_{3}(X)=X-1$; and the $\sigma_{3}$-derivation $\delta_{3}\colon B_{2} \to B_{2}$ satisfies
	$\delta_{3}(Y)=\tfrac12 Y^{2}$, $\delta_{3}(X)=\lambda Y$.
	
	\begin{remark}
		It is clear that the primitive space of $B(\lambda)$ is spanned by $X,Y$. Consequently, the right coideal subalgebras of GK-dimension $1$ are parameterized by the projective line $\mathbb{P}^{1}$, where each $[a,b]\in \mathbb{P}^{1}$ corresponds to $\Bbbk[aX+bY]$.    
	\end{remark}
	
	We now apply the general algorithm to classify all substantial right coideal subalgebras of $B(\lambda)$. Consider the PBW generating system $\GG=(\{z_{i}\}_{i\in\II_{3}},<)$ with
	$z_{1}=Y$, $z_{2}=X$, $z_{3}=Z$ and ordering $1<2<3$.
	Let $\dd\colon\II_{3} \to \mathbb{N}$ be the map induced by the coradical filtration, so $\dd(1)=\dd(2)=1,\ \dd(3)=2$. The map $\dd$ is order-preserving, and one verifies that $\GG$ is $\dd$-triangular and $(\Delta,\dd)$-dominated.
	The $\dd$-thin replacements take the form
	\[
	\begin{aligned}
		\hat z_{1} &= z_{1},\\
		\hat z_{2} &= z_{2} + a z_{1},\\
		\hat z_{3} &= z_{3} + bz_{1} + cz_{2} + dz_{1}^{2} + ez_{1}z_{2} + fz_{2}^{2}
	\end{aligned}
	\]
	with $a,b,c,d,e,f \in \Bbbk$.
	Computing coproducts yields
	\[
	\begin{aligned}
		\Delta(\hat z_{1}) &= 1\otimes\hat z_{1} + \hat z_{1}\otimes1,\\
		\Delta(\hat z_{2}) &= 1\otimes\hat z_{2} + \hat z_{2}\otimes1,\\
		\Delta(\hat z_{3}) &= 1\otimes\hat z_{3} + \hat z_{3}\otimes1
		+ 2d(z_{1}\otimes z_{1}) + 2f(z_{2}\otimes z_{2}) \\
		&\qquad + (1+e)(z_{2}\otimes z_{1}) + e(z_{1}\otimes z_{2}).
	\end{aligned}
	\]
	Thus $\hat z_{1},\hat z_{2}$ are primitive, while $\hat z_{3}$ is not.
	Implementing Steps 2–5 of the general algorithm yields the following families.
	\begin{enumerate}
		\item $\II_{3}'=\{1,2\}$. We may set $a=0$. Since $\hat z_{1} =z_{1}, \hat z_{2} =z_{2}$ are primitive and
		\[
		[\hat z_{2},\hat z_{1}]=[z_{2},z_{1}]=z_{1}=\hat z_{1},
		\]
		the corresponding right coideal subalgebra is
		\[
		A=\Bbbk[z_{1}][z_{2};\delta],
		\]
		where the derivation $\delta\colon \Bbbk[z_{1}] \to \Bbbk[z_{1}]$ satisfies $\delta(z_{1})=z_{1}$.
		
		\item $\II_{3}'=\{1,3\}$. We may set $b=d=0$.
		The right coideal condition $\Delta(\hat z_{3})\in A\otimes H$ forces
		the terms $2f(z_{2}\otimes z_{2})$ and $(1+e)(z_{2}\otimes z_{1})$ to vanish,
		so $f=0$ and $e=-1$. Hence
		\[
		\hat z_{3} = z_{3}  + cz_{2} - z_{1} z_{2},\qquad c\in\Bbbk.
		\]
		Moreover,
		\[
		[\hat z_{3},\hat z_{1}] = [z_{3}  + cz_{2}  - z_{1}z_{2},\,z_{1}]
		= c z_{1} - \tfrac12 z_{1}^{2} = c \hat z_{1} - \tfrac12 \hat z_{1}^{2}.
		\]
		The resulting right coideal subalgebras are
		\[
		A = \Bbbk[z_{1}][z_{3}+cz_{2}-z_{1}z_{2};\delta], \quad c\in \Bbbk,
		\]
		where the derivation $\delta\colon \Bbbk[z_{1}]\to \Bbbk[z_{1}]$ is defined by $\delta(z_{1})=c z_{1}-\frac12 z_{1}^{2}$.
		
		\item $\II_{3}'=\{2,3\}$. We may set $c=f=0$.
		Rewrite $\Delta(\hat z_{3})$ as
		\[
		\begin{aligned}
			\Delta(\hat z_{3})
			&= 1\otimes\hat z_{3} + \hat z_{3}\otimes1
			+ 2d(z_{1}\otimes z_{1})
			+ (1+e)(z_{2}\otimes z_{1}) + e(z_{1}\otimes z_{2})\\
			&= 1\otimes\hat z_{3} + \hat z_{3}\otimes1
			+ \bigl[(1+e)z_{2}+2d z_{1}\bigr]\otimes z_{1}
			+ e z_{1}\otimes z_{2}.
		\end{aligned}
		\]
		The right coideal and $(\Delta,\dd)$-dominated conditions require both factors
		$(1+e)z_{2}+2d z_{1}$ and $e z_{1}$ to lie in
		$\Bbbk[\hat z_{2}]=\Bbbk[z_{2}+a z_{1}]$. This gives
		\[
		a=2d \quad \text{and}\quad e = 0.
		\]
		We compute the commutator, using $[z_{3},z_{2}]= -z_{3}+\lambda z_{1}$, $[z_{2},z_{1}]=z_{1}$,
		$[z_{3},z_{1}]=\frac12 z_{1}^{2}$:
		\[
		\begin{aligned}
			[\hat z_{3},\hat z_{2}]
			= \bigl[z_{3}+ bz_{2} +\tfrac{a}{2} z_{1}^{2},\,z_{2}+a z_{1}\bigr] = -\hat z_{3} + \lambda z_{1}.
		\end{aligned}
		\]
		The IHOE structure condition requires this commutator to belong to $\Bbbk \hat z_{3}+\Bbbk[\hat z_{2}]$. This enforces
		\[
		\lambda =0.
		\]
		For $\lambda=0$, the corresponding right coideal subalgebras are
		\[
		A = \Bbbk[z_{2}+ az_{1}][z_{3} + b z_{1} +\tfrac{a}{2} z_{1}^{2}; \sigma], \quad a,b\in \Bbbk,
		\]
		where the automorphism $\sigma\colon \Bbbk[z_{2}+ az_{1}]\to  \Bbbk[z_{2}+ az_{1}]$ satisfies $\sigma(z_{2}+ az_{1}) = z_{2}+ az_{1}-1$.
	\end{enumerate}
	
	\begin{table}[H]
		\centering
		\caption{Substantial right coideal subalgebras of $B(\lambda)$}
		\label{table3}
		\renewcommand{\arraystretch}{1.5}
		\begin{tabular}{|>{\centering\arraybackslash}m{2.6cm}|c|>{\raggedright\arraybackslash}m{5cm}|>{\centering\arraybackslash}m{1.6cm}|}
			\hline
			Family & $\II_{3}'$ & Right Coideal Subalgebras & Parameters \\
			\hline
			\multirow{2}*[-7pt]{\shortstack{$B(\lambda)$\\ $\lambda\neq0$}}
			& $\{1,2\}$ & $\Bbbk[z_{1}][z_{2}; \delta]$, \newline $\delta(z_{1})=z_{1}$ & \\
			\cline{2-4}
			& $\{1,3\}$ & $\Bbbk[z_{1}][z_{3}+c z_{2}-z_{1}z_{2}; \delta]$, \newline $\delta(z_{1})=c z_{1}-\tfrac12 z_{1}^{2}$ & $c\in\Bbbk$ \\
			\hline
			\multirow{3}*[-12pt]{$B(0)$}
			& $\{1,2\}$ & $\Bbbk[z_{1}][z_{2}; \delta]$, \newline $\delta(z_{1})=z_{1}$ & \\
			\cline{2-4}
			& $\{1,3\}$ & $\Bbbk[z_{1}][z_{3}+c z_{2}-z_{1}z_{2}; \delta]$, \newline $\delta(z_{1})=c z_{1}-\tfrac12 z_{1}^{2}$ & $c\in\Bbbk$ \\
			\cline{2-4}
			& $\{2,3\}$ & $\Bbbk[z_{2}+ az_{1}][z_{3} + b z_{1} +\tfrac{a}{2} z_{1}^{2}; \sigma]$, \newline $\sigma(z_{2}+a z_{1})=z_{2}+a z_{1}-1$ & $a,b\in \Bbbk$ \\
			\hline
		\end{tabular}
	\end{table}
	
	\subsection{Some observations}
	
	\begin{remark}
		Brown and Gilmartin classified the right coideal subalgebras of $B(\lambda)$; see \cite[Proposition 3.7]{BG16}, though several details are omitted in their work. However, we discover an additional family of right coideal subalgebras with GK-dimension $2$ in the case $\lambda=0$, given by
		\[
		\Bbbk[z_{2}+az_{1}]\bigl[z_{3}+bz_{1}+\tfrac{a}{2}z_{1}^{2};\sigma\bigr], \quad a,b\in \Bbbk.
		\]
		This family corresponds to $\II_{3}'=\{2,3\}$.
	\end{remark}
	
	\begin{remark}
		From our classification results, one observes that each member of the two families $A(\lambda_{1},\lambda_{2},\alpha)$ and $B(\lambda)$ admits exactly one substantial Hopf subalgebra, namely the universal enveloping algebra of its primitive space, corresponding to the case $\II_{3}'=\{1,2\}$. By contrast, each such Hopf algebra possesses infinitely many substantial right coideal subalgebras. This supports the viewpoint that coideal subalgebras should be regarded as intermediate substructures beyond Hopf subalgebras in Hopf algebra theory, a perspective already adopted for quantum groups and mentioned in the paragraph preceding Theorem \ref{main-theorem-2} in the Introduction.
	\end{remark}
	
	\begin{remark}
		The classification reveals that the parameter spaces of substantial right coideal subalgebras of $A(\lambda_{1},\lambda_{2},0)$ (resp. $B(\lambda)$) behave drastically differently when $\lambda_{1}\neq \lambda_{2}$ versus $\lambda_{1}=\lambda_{2}$ (resp. $\lambda\neq 0$ versus $\lambda=0$). We expect a conceptual explanation for this phenomenon.
	\end{remark}

	\section*{Acknowledgments}
	The idea of this paper originated when the second-named author prepared a talk for the IAMS-BIRS workshop “Poisson Geometry and Artin–Schelter Regular Algebras”, held in Hangzhou in October 2024. The second-named author would like to thank IAMS-BIRS and the workshop organizers for their hospitality. Part of this work was carried out during his visit to the Shanghai Center of Mathematical Sciences (SCMS) in autumn 2025, and he is grateful to Professor Quanshui Wu for his warm reception. The authors thank Dr. Mengying Hu of SCMS for pointing out a critical gap in an early version of this manuscript. This research was supported by the National Natural Science Foundation of China (NSFC) under Grant No. 12371039.

\Addresses

\end{document}